\documentclass[11pt]{amsart}
\usepackage{amssymb,mathrsfs,graphicx,enumerate,color}
\usepackage{epsfig}
\usepackage{caption}
\graphicspath{{Figure/}}
\usepackage[margin=1in]{geometry}
\usepackage{subcaption}
\usepackage{hyperref}
\usepackage{comment}
\usepackage{colortbl}
\definecolor{black}{rgb}{0.0, 0.0, 0.0}
\definecolor{red}{rgb}{1.0, 0.5, 0.5}

\title[Weak-BV Stability of the Euler System]{Weak-BV stability of inflow and outflow problems for the isentropic Euler system}

\author[Kang]{Moon-Jin Kang}
\address[Moon-Jin Kang]{\newline Department of Mathematical Sciences \newline Korea Advanced Institute of Science and Technology, Daejeon  34141, Republic of Korea}
\email{moonjinkang@kaist.ac.kr}

\author[Meng]{Jiayun Meng}
\address[Jiayun Meng]{\newline Department of Mathematics \newline The University of Texas at Austin, Austin, TX 78712, USA}
\email{jiayun@utexas.edu}

\author[Oh]{HyeonSeop Oh}
\address[HyeonSeop Oh]{\newline Department of Mathematical Sciences \newline Korea Advanced Institute of Science and Technology, Daejeon  34141, Republic of Korea}
\email{ohs2509@kaist.ac.kr}

\author[Vasseur]{Alexis F. Vasseur}
\address[Alexis F. Vasseur]{\newline Department of Mathematics \newline The University of Texas at Austin, Austin, TX 78712, USA}
\email{vasseur@math.utexas.edu}

\newtheorem{theorem}{Theorem}[section]

\newtheorem{conv}{Convention}[section]

\newcommand{\e}{\varepsilon}

\numberwithin{figure}{section}
\newcommand{\beq}{\begin{equation}}
\newcommand{\eeq}{\end{equation}}
\newcommand{\bsp}{\begin{split}}
\newcommand{\esp}{\end{split}}

\newcommand{\sgn}{{\text{\rm sgn}}}

\newcommand{\RR}{{\mathbb R}}

\newcommand{\s}{\sigma}

\newcommand\adots{\mathinner{\mkern2mu\raise1pt\hbox{.}
\mkern3mu\raise4pt\hbox{.}\mkern1mu\raise7pt\hbox{.}}}

\newcommand{\lct}{C_b}
\newcommand{\ts}{t_*}

\newtheorem{theo}{Theorem}[section]
\newtheorem{prop}[theo]{Proposition}

\newtheorem{lem}[theo]{Lemma}
\newtheorem{defi}[theo]{Definition}

\newtheorem{rem}[theo]{Remark}

\def\charf {\mbox{{\text 1}\kern-.30em {\text l}}}

\newcommand{\util}{{\tilde{u}}}

\def \m {\mu}

\def \r {\rho}
\def \rhotil {\tilde{\rho}}
\def \b {\beta}

\def \Rp {\mathbb{R}_+}

\def \x {\xi}
\def \th {\theta}

\def \Util {\widetilde{U}}

\def \Ubar {\overline{U}}

\newcommand{\del}{\Delta}

\begin{document}
\bibliographystyle{acm}

\date{\today}

\subjclass[2020]{35L65, 35Q35, 35L50, 35B35} \keywords{Compressible Euler System; Inflow problem; Outflow problem; Stability.}

\thanks{\textbf{Acknowledgment.}  M.-J. Kang and H. Oh were partially supported by the National Research Foundation of Korea (RS-2024-00361663). A. Vasseur was partially supported by the NSF grants DMS 2219434, 2306852, and 1840314.} 

\begin{abstract}
We study the well-posedness of small BV solutions to the one-dimensional isentropic Euler system on the half-line under inflow and outflow boundary conditions. These boundary conditions are formulated in terms of admissible trace sets determined by Navier--Stokes boundary layers and zero-speed shocks. For both the inflow and outflow problems, we construct small BV solutions taking values in the subsonic region. These solutions are unique and satisfy the quantitative stability estimate
\begin{align*}
    \|U(\cdot, t)-V(\cdot, t)\|_{L^2} \lesssim \sqrt{\|U(\cdot, 0) - V(\cdot, 0)\|_{L^2}} \, ,
\end{align*}
which holds for any such small BV solution $V$ and any $L^\infty$ entropy solution $U$ satisfying the strong trace property and the inflow/outflow boundary conditions. In particular, we emphasize that $U$ may take values outside the subsonic region and need not have bounded variation or be constructed by the front tracking scheme. 
This establishes the first weak-BV stability and uniqueness theory for inflow and outflow initial-boundary value problems. A key difficulty lies in verifying that the boundary set is compatible with the $a$-contraction framework, and in proving that the front-tracking limit satisfies the prescribed inflow/outflow conditions.
\end{abstract}

\maketitle \centerline{\date}

\tableofcontents
\section{Introduction}
\setcounter{equation}{0}

We consider the isentropic Euler system on the half space $\mathbb{R}_+ := (0,\infty)$: 
\begin{equation}\label{eq:E}
	\begin{aligned}
		&\r_t + (\r u)_x = 0, \quad x > 0, \quad t > 0,\\
		&(\r u)_t + (\r u^2 + p(\r))_x = 0, \quad x > 0, \quad t > 0,
	\end{aligned}
\end{equation}
where $\r(x,t), u(x,t)$, and $p(\r)$ represent the fluid density, fluid velocity, and pressure law. We consider the case of a polytropic perfect gas, for which the pressure satisfies
\[
p(\r) = \r^\gamma, \quad \gamma >1,
\]
where $\gamma$ denotes the adiabatic constant. We denote by $\Omega := \{U=(\r,\r u):\r \geq 0\}$ the state space of \eqref{eq:E}.
Throughout this paper, we work with the bounded state space
\begin{align}\label{def:RI}
    \mathcal{V}_0 = \{U=(\rho, \rho u)\in { \overline{\mathbb{R}}_+}\times\mathbb{R}: -C < V_1(U) = u-c_1\rho^{\frac{\gamma-1}{2}} \leq u + c_1\rho^{\frac{\gamma-1}{2}} = V_2(U) < C\} \subset \Omega ,
\end{align}
where $c_1:=\frac{2\sqrt{\gamma}}{\gamma -1}$ and $V_i(U)$ is the $i$-th Riemann invariant of the system \eqref{eq:E}. We define $\mathcal{V}$ as the interior of $\mathcal{V}_0$. Note that $\mathcal{V}_0$ includes the vacuum state $\{(0,0)\}$, and $\mathcal{V}_0 = \mathcal{V} \cup \{(0,0)\}$.\\

We will only consider entropic solution of (\ref{eq:E}), i.e., the weak solution that verifies 
\begin{align}\label{entropy inequality}
    \partial_t \eta(U) + \partial_x q(U) \leq 0 \, , \; \textup{ for any } t>0 \, ,\; x>0 \, .
\end{align}
in the sense of distributions, where $q$ is the entropy flux associated with the entropy $\eta$. More precisely, we consider solutions that satisfy that for any $0\leq \psi \in C_0^\infty([0,\infty)\times [0,\infty))$,
\begin{align*}
    \int_0^\infty \int_0^\infty \eta(U)\partial_t \psi + q(U)\partial_x \psi \, dx\, dt + \int_0^\infty \eta(U^0) \psi(x,0) \, dx + \int_0^\infty q(U(0,t)) \psi(0,t) \, dt \geq 0 \, ,
\end{align*}
where $U^0: \mathbb{R}_+ \rightarrow \mathbb{R}$ is the prescribed initial data.\\

\noindent {\bf$\bullet$ Boundary conditions for the Euler system.} 
The eigenvalues of the flux Jacobian in (\ref{eq:E}) are
\begin{align*}
    \lambda_1(U) = u - \sqrt{\gamma\,\rho^{\gamma-1}},
    \qquad
    \lambda_2(U) = u + \sqrt{\gamma\,\rho^{\gamma-1}}.
\end{align*}
Note $\lambda_1(U)<\lambda_2(U)$ for all states $U$.

In the state space, we distinguish five regions according to the sign of the characteristic speeds:
\begin{equation*}
    \begin{aligned}
        \Omega_{super}^+ = \{U: \lambda_1(U)>0\} \, , \; \Omega_{sub} = \{U: 
        \lambda_1(U)<&\,0<\lambda_2(U)\} \, , \; \Omega_{super}^- = \{U: \lambda_2(U)<0\} \, ,  \\
        \Gamma_{trans}^+ = \{U: \lambda_1(U)=0\} \, &, \; \Gamma_{trans}^- = \{U: \lambda_2(U)=0\} \, . \\
    \end{aligned}
\end{equation*}
Here, $\Omega_{sub}$ denotes the subsonic region, $\Omega_{super}^\pm$ denote the supersonic regions, and $\Gamma_{trans}^\pm$ denote the transonic curves.

The dimension of boundary conditions for the Euler system depends on the above regions.
We study two boundary value problems for the isentropic Euler system on the half space $\mathbb{R}_+$: the inflow and outflow problems on subsonic regions. 
In this paper, we first construct small BV solution satisfying the boundary condition \eqref{small BV BC} as a one-dimensional boundary set near a base point $U_b$ on the subsonic region. We also prove the stability of such a small BV solutions under perturbations that are bounded entropy solution satisfying strong trace property and boundary condition. We consider a large boundary set for perturbations, which can even cross the transonic curve into supersonic region. We define this set based on the boundary conditions introduced by Ancona, Marson, and Spinolo in \cite{AMS24} (see also \cite{BS20}). 
This boundary set contains points connected by boundary layer of Navier-Stokes system or of 0-speed shock starting from the base point. To introduce such a large boundary set, we consider the inflow and outflow problem of the Navier-Stokes system
\begin{equation}\label{eq:NS}
	\begin{aligned}
		&\r_t + (\r u)_x = 0, \quad x\geq 0, \quad t \geq 0,\\
		&(\r u)_t + (\r u^2 + p(\r))_x = \m u_{xx},
	\end{aligned}
\end{equation}
subject to the boundary data
\begin{align}\label{NS BC}
    \r(0,t) = \r_b\, , \; u(0,t) = u_b > 0 \, \textup{ for the inflow and } u(0,t) = u_b < 0 \, \textup{ for the outflow.}
\end{align}
Here, $\m>0$ denotes the vanishing viscosity parmeter. \\%

Formally, the isentropic Euler system \eqref{eq:E} can be regarded as the inviscid limit of the isentropic Navier-Stokes system. In contrast to the Cauchy problem, due to the rapid change of the solution to \eqref{eq:NS} in the vicinity of the boundary as $\mu \to 0+$, the boundary condition \eqref{NS BC} imposed on the Navier-Stokes system is incompatible with the Euler system \eqref{eq:E}, This mismatch is compensated by the formation of a boundary layer.

Now, we present the definition of the boundary conditions for the Euler system \eqref{eq:E}.
\begin{defi}\label{def:BVP1}\cite{AMS24}
For a fixed $U_b=(\rho_b,\rho_b u_b)$, suppose that $u_b>0$ ($u_b<0$, respectively). We say that $\Ubar \sim U_b$ if there exists $\underbar{U}$ such that the following conditions are both satisfied:
\begin{enumerate}
    \item[(i)] $\underline{U}$ (left) and $\Ubar$ (right) are connected by a $0$-speed shock;
    \item[(ii)] there is a ``boundary layer'' $U(x):\Rp \to \Omega$ such that
    \[
    \begin{aligned}
        &(\r u)_x = 0, \quad x\geq 0,\\
		&(\r u^2 + p(\r))_x = \mu u_{xx}\\
        &\beta(U(0),U_b)=0, \quad \lim_{x \to +\infty} U(x) = \underline{U},
    \end{aligned}
    \]
    where $\beta(U,U_b)$ is defined as 
    \[
    \begin{cases}
        \beta(U,U_b) = u - u_b \, \, &\text{if } u_b <0 \quad (outflow),\\
        \beta(U,U_b) = U - U_b \, \, &\text{if } u_b >0 \quad (inflow).
    \end{cases}
    \]
\end{enumerate}
\end{defi}
\begin{defi}\label{def:BVP}
    For a fixed $U_b = (\r_b,\r_bu_b)$, suppose that $u_b>0$ ($u_b<0$ respectively). We define $\mathcal{B}_{in}(U_b)$ ($\mathcal{B}_{out}(U_b)$, respectively) as the boundary set for the inflow (outflow, respectively) problem of the Euler system by
\[
\mathcal{B}_{in/out}(U_b):=\{\Ubar \in \Omega: \Ubar \sim U_b\}.
\]
\end{defi}
Roughly speaking, $\Ubar \in \mathcal{B}_{in/out}(U_b)$ if $U_b$ and $\Ubar$ are connected by a 0-speed shock and by a boundary layer. An detailed analysis of the boundary sets $\mathcal{B}_{in/out}(U_b)$ is presented in Section~\ref{sec:bdset}.\\

Given the boundary datum $U_b$, we say that an entropy solution $U(x,t)$ to \eqref{eq:E} satisfies the inflow/outflow boundary condition if $U(0,\cdot) \sim U_b$, or equivalently,
\[
U(0,\cdot) \in \mathcal{B}_{in/out}(U_b).
\]
\begin{rem}
1. For a fixed $U_b$, the admissible trace set $\mathcal{B}_{out/in}(U_b)$ in Definition~\ref{def:BVP} depends on the underlying viscous mechanism. In fact, if we consider a viscous system different from the Navier-Stokes system, then the sets $\mathcal{B}_{out/in}(U_b)$ generally change.

2. Another natural way to impose the boundary condition, when the boundary is characteristic, is the condition (C) introduced by Amadori \cite{A97}, which generalizes the boundary condition of Dubois, Lefloch \cite{DL88}. Condition (C) is formulated so that, for a given $U_b \in \Omega$, $U_b$ (left) and the trace of a solution to Euler system $U(t,0)$ are connected by shocks or rarefactions with nonpositive speed. We highlight that the admissible boundary sets arising from condition (C) and from Definition~\ref{def:BVP1} (or \cite[Definition 1.1]{AMS24}) are different (See \cite[Section 2.6]{DL88} and Section 2.1). Specifically, the sets $\mathcal{B}_{out/in}(U_b)$ in Definition~\ref{def:BVP} (or \cite[Definition 1.1]{AMS24}) depends on the viscous effects of Navier-Stokes system \eqref{eq:NS}, while the condition (C) is independent of the specific viscous mechanism.
\end{rem}

We only discuss the Navier-Stokes system to determine the Euler boundary set for weak solutions. Throughout the rest of the paper, all solutions are understood to refer to the Euler system.\\

\noindent {\bf $\bullet$ Weak-BV framework: notions of the solution classes $\mathcal{S}_{BV,\e}$ and $\mathcal{S}_{weak}$.} 
We require that the boundary trace of the weak solutions belong to the boundary set, in the sense of the Strong Trace Property 
defined below.
\begin{defi}\label{def stp}
    Let $U\in L^\infty(\mathbb{R}_+ \times \mathbb{R}_+)$. We say that $U$ verifies the Strong Trace Property if for any Lipschitzian curve $t\mapsto X(t) \in [0,\infty)$, there exists two bounded functions $U_-, U_+ \in L^\infty (\mathbb{R}_+)$ such that for any $T>0$,
    \begin{align*}
        \underset{n\rightarrow \infty}{\textup{lim}} \int_0^T \underset{\{y\in(0,1/n) : \, X(t)\pm y \in \mathbb{R}_+\}}{\textup{sup}} |U(t,X(t)\pm y)-U_\pm(t)| \, dt  = 0 \, .
    \end{align*}
\end{defi}
More precisely, let $U_+^b(t)$ be the trace of $U$ on the boundary, i.e. $U_+(t)$ corresponding to $X(t) \equiv 0$. The broader class of weak solutions considered for the inflow/outflow problem is the following.
\begin{equation*}
    \begin{aligned}
        \mathcal{S}_{weak} = \{&U\in L^\infty(\mathbb{R}_+ \times \mathbb{R}_+ :\mathcal{V}_0) :\, U \textup{ is a solution to }  (\ref{eq:E}) \, , \\
        &\textup{satisfying } (\ref{entropy inequality}) \, , \; \textup{Definition}~\ref{def stp}\, , 
        \textup{ and } U_+^b(t) \in \mathcal{B}_{in/out}(U_b) \, \, \text{for a.e. } t > 0 \} \, .
    \end{aligned}
\end{equation*}
Note that the space $\mathcal{S}_{\mathrm{weak}}$ includes vacuum states and has
no smallness condition. There also exist functions with infinite BV norm that
satisfy the Strong Trace Property (see, for example, Section 1.3 in \cite{CKF25}).

On the other hand, for any open and convex set $\mathcal{O} \subset \mathcal{V}$ and any $U_b \in \mathcal{O}$, we define the following set as the class of admissible initial data for small BV solutions:
\begin{equation}\label{small BV initial set}
    \begin{aligned}
        \mathcal{S}_{BV,\varepsilon}^0 = \{U^0: \mathbb{R}_+\rightarrow \mathcal{O} \cap \Omega_{sub} : \|U^0-U_b\|_{L^\infty(\mathbb{R}_+)} \leq \varepsilon \, , \; \|U^0\|_{BV(\mathbb{R}_+)} \leq \varepsilon
        \} \, .
    \end{aligned}
\end{equation}

\subsection{Main results}
We prove the existence and stability of small BV solutions with initial data in
$\mathcal{S}_{BV,\varepsilon}^0$ within the weak solution class
$\mathcal{S}_{\mathrm{weak}}$. Before stating the main results, we first distinguish the inflow and outflow portions of the subsonic region by
\begin{align*}
    \Omega_{sub}^{in} = \Omega_{sub}\cap \{u>0\} \, , \; \Omega_{sub}^{out} = \Omega_{sub}\cap \{u<0\} \, .
\end{align*}
\begin{theorem}\label{thm}
    Let $U_b \in \Omega_{sub}^{in/out}$.
    For any open set $\mathcal{O}$ such that $U_b \in \overline{\mathcal{O}}\subset \mathcal{V} \cap \Omega_{sub}$, there exists $\varepsilon>0$ such that the following is true. 
    
    Let $V^0\in \mathcal{S}_{BV,\varepsilon}^0$. Then there exists a unique small BV solution $V$ to \eqref{eq:E} satisfying \eqref{entropy inequality}, $V(0,t) \in \mathcal{B}_{in/out}(U_b)$, and $V(x,t) \in \Omega_{sub}^{in/out}$ for a.e. $t>0$, 
    with initial data $V^0$.
    Moreover, for any $R,T>0$, there exists a constant $C>0$ depending on $R,T$ such that for any $\tau \in [0,T]$ and any $U\in \mathcal{S}_{weak}$,
    \begin{align}\label{thm eq}
        \|U(\cdot, \tau)-V(\cdot, \tau)\|_{L^2(0,R)} \leq C \sqrt{\|U(\cdot,0)-V(\cdot,0)\|_{L^2(0,R+s\tau)}} \, ,
    \end{align}
    where $s>0$ depending only on $\mathcal{V}_0$ and $\mathcal{O}$, and is precisely defined in (\ref{def of s}).
\end{theorem}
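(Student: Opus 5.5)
The proof has two parts. First we construct $V$ by a modified front tracking scheme adapted to the boundary. Then we prove the stability estimate \eqref{thm eq} with the $a$-contraction with shifts method, following the weak-BV framework for the Cauchy problem (Chen--Krupa--Vasseur). Uniqueness follows from \eqref{thm eq} applied with $U(\cdot,0)=V(\cdot,0)$, since the constructed $V$ itself lies in $\mathcal{S}_{weak}$.

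\textbf{Existence via front tracking.} We first analyze $\mathcal{B}_{in/out}(U_b)$ near $U_b$, as announced for Section~\ref{sec:bdset}. In the subsonic region, for the inflow case ($u_b>0$), there are two incoming characteristics. The boundary layer must have $U(0)=U_b$ and is trivial near $U_b$. Zero-speed shocks from $U_b$ cannot have a subsonic right state close to $U_b$. Hence $\mathcal{B}_{in}(U_b)\cap B_\delta(U_b)=\{U_b\}$, which would be the degenerate case. If the statement intends a one-dimensional set, then the outflow case gives the generic picture: $\beta=u-u_b$ prescribes only the velocity, and the boundary layers starting from $\{u=u_b\}$ give a smooth curve through $U_b$ transversal to the $2$-rarefaction/shock curve. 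For the inflow case we would verify likewise, from the profile ODE, what the local set is.

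Given this, we solve a boundary Riemann problem at $x=0$: for a state $U_R$ near $U_b$, find $\Ubar\in\mathcal{B}(U_b)$ joined to $U_R$ by a $2$-wave of positive speed. By the implicit function theorem this works in the outflow case, by transversality of the two curves; in the inflow case $\Ubar=U_b$ directly. We then run Bressan's front tracking scheme, with approximate rarefaction fans and a simplified solver, and reflect fronts at the boundary. We add boundary terms to the Glimm functional, using the weights from Amadori \cite{A97} and Colombo--Guerra, so that reflection strengths are controlled by a factor $<1$ times the incoming $1$-wave. This yields uniform BV bounds, values in $\mathcal{O}\cap\Omega_{sub}^{in/out}$ for $\varepsilon$ small, and a compactness limit. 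We must also check that the limit still has trace in $\mathcal{B}(U_b)$. Traces of the approximations lie in an $O(\nu)$-neighborhood of the closed curve, and BV-in-time control gives strong convergence of traces, so this follows.

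\textbf{Stability via $a$-contraction.} Fix $U\in\mathcal{S}_{weak}$. We use the relative entropy $\eta(U|V)$ with piecewise-constant weights $a(x,t)$ that jump across the shock fronts of the front tracking approximations $V_\nu$. Each shock curve is replaced by a shifted curve $X(t)$ solving the Filippov-type ODE of the Krupa--Vasseur theory, which makes the local dissipation at each shock nonpositive. These interior estimates can be taken from the Cauchy-problem theory. The new ingredient is the boundary term
\[
-a(0^+,t)\,q\big(U_+^b(t);V(0,t)\big),
\]
which comes from the relative entropy flux at $x=0$. We need to show $q(\Ubar;\underline V)\ge -C|\cdot|$, with the right sign, for all $\Ubar\in\mathcal{B}(U_b)$ and all $\underline V$ in the small curve near $U_b$. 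Here $\Ubar$ may be far away, even supersonic. This is the main obstacle, the "compatibility" of the boundary set with $a$-contraction. We would attack it in two steps. First, compute $q(\Ubar;V)$ for $V$ on the boundary set and show that it is nonnegative to leading order, using the fact that states in $\mathcal{B}$ are reached by a zero-speed shock from a boundary-layer endpoint. We expect that the entropy dissipation of a viscous layer gives $q(\underline U)\ge q(U_b)$-type inequalities, and that the Lax/entropy condition at zero speed gives $q(\Ubar)\le$ flux terms of the correct sign. Second, absorb any residual by choosing the weight $a$ near the boundary suitably large or small.

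Integrating in time over the backward cone $[0,R+s(\tau-t)]$, with $s$ exceeding the maximal wave speed and the shift speeds, gives
\[
\int_0^R\eta(U|V_\nu)(\tau)\,dx\le C\int_0^{R+s\tau}\eta(U|V_\nu)(0)\,dx+C\nu .
\]
The square root in \eqref{thm eq} then arises from the standard comparison between the shifted approximation and the true $V$. The shifts move fronts by $O(\sqrt{\cdot})$ in $L^1$, which controls $\|V_\nu(\cdot,\tau)-V(\cdot,\tau)\|_{L^2}$ after optimizing the choice of $\nu$, exactly as in the Cauchy-problem weak-BV theory. We then let $\nu\to0$.
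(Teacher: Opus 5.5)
Your overall architecture matches the paper: front tracking, $a$-contraction with shifts, and uniqueness from \eqref{thm eq} using $V\in\mathcal{S}_{weak}$. However, the boundary analysis contains a concrete error, and the central new estimate is left unproved.

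\textbf{The boundary set and the boundary dissipation estimate.} Subsonic inflow does not have two incoming characteristics: only $\lambda_2>0$ enters $x>0$. The layer with $U(0)=U_b$ is also not trivial near $U_b$. By case 3 of Lemma~\ref{lem:BLin} it reaches every state of $BL(U_b)$ with $0<u<u_*$. So $\mathcal{B}_{in}(U_b)$ is the subsonic/transonic part of the curve $\{\rho u=\rho_b u_b\}$, which is where \eqref{small BV BC} comes from. If you impose $\Ubar=U_b$, your boundary Riemann problem has no solution unless $U_R$ lies on the $2$-wave curve through $U_b$.

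More seriously, the boundary term $F_0^+=a(0+,t)\,q\bigl(U_+^b(t);\psi(0+,t)\bigr)$ has only a positive scalar factor. There is no shift at $x=0$ ($h_0\equiv 0$), and no inclined cone edge as in the Cauchy problem. Hence no choice of $a$ can absorb a positive residual, and a trace term cannot be controlled by $\int\eta\,dx$. What you need is the exact inequality $q(U;\Util)\le 0$ for all $U\in\mathcal{B}_{in/out}(U_b)$ (including far or supersonic states) and all $\Util\in\mathcal{B}_{in/out}(U_b)\cap\Omega_{sub}$. A lower bound with an error term does not suffice, and your viscous-layer heuristic does not deliver this inequality. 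The paper proves it in Lemmas~\ref{lem: inflow boundary} and~\ref{lem: ouflow boundary}:
\begin{itemize}
\item For inflow, write $\rho=a\rhotil$ along $\rho u=m$. Then $q=f(a)$ with $f'(a)=\frac{m}{\rhotil^2a^3}(a-1)(m^2-\gamma a^{\gamma+1}\rhotil^{\gamma+1})$, so $f\le f(1)=0$ on $[\tilde a,\infty)$. This uses essentially that the set stops at the transonic value $\tilde a$.
\item For outflow, split $\mathcal{B}_{out}$ by whether $u\ge u_b$ and by position relative to $S_2(\Util)$. The cases are handled using $u<0$ with $(p(\rho)-p(\rhotil))(u-\util)\le 0$, the entropy inequality across $2$-shocks of nonpositive speed issuing from $\{u=u_b\}$, and Riemann decompositions with $s_2\le 0$ or $\lambda_2(U)\le 0$.
\end{itemize}

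\textbf{The two approximation steps you only assert.} First, the square root does not come from ``optimizing $\nu$''. It comes from an $L^1$ comparison of the shifted $\psi_\nu$ with the unshifted $V_\nu$:
\begin{itemize}
\item Lemma~\ref{lem: L1 difference} bounds the growth of $d_\nu(V_\nu,\psi_\nu)$ by $\sum_\alpha|\sigma_\alpha||\dot h_\alpha-\dot h_{true,\alpha}|$.
\item Cauchy--Schwarz against \eqref{main thm proof eq 3} gives $\|\psi_\nu-V_\nu\|_{L^1(0,R)}\lesssim\|U(\cdot,0)-V_\nu(\cdot,0)\|_{L^2}+\nu^{1/2}$.
\item Boundedness of the state space gives $L^2\le C\sqrt{L^1}$.
\end{itemize}
This requires the unshifted approximations to form a $d_\nu$-contractive semigroup with $d_\nu\simeq L^1$. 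That is provided by the Bressan--Colombo interpolated curves adapted to the boundary by Amadori--Colombo (Proposition~\ref{prop L^1 semigroup}). Bressan's scheme with a simplified solver gives no such comparison, which is why Chen--Krupa--Vasseur obtained only a qualitative result. Also, your displayed inequality with the unshifted $V_\nu$ would give linear $L^2$ stability, which is false; it holds only for $\psi_\nu$ with the weight $a$.

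Second, uniform-in-$\nu$ ``BV-in-time control'' at $x=0$ is not automatic. The Glimm functional controls variation in $x$, not the total strength of fronts crossing a vertical line, and reflections amplify strengths by up to $C_b>1$. The paper obtains this control (Proposition~\ref{lem:BC}) by labeling elements:
\begin{itemize}
\item tracked strengths stay below $2C_b$ times their generation strength (Lemma~\ref{lem:ithbd});
\item generation strengths sum to at most $(1+2C_0)L(0)$;
\item each element crosses $x=x_*$ at most twice.
\end{itemize}
With wave speeds bounded away from $0$, Aubin--Lions then gives compactness in $C_x([0,R];L^1_t)$, so \eqref{small BV BC} passes to the limit.
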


\begin{rem}
    1. Theorem~\ref{thm} establishes existence and uniqueness of small BV solutions in the subsonic region, and their stability within the class $\mathcal{S}_{\mathrm{weak}}$ via the quantitative estimate \eqref{thm eq}. These results extend the result of the Cauchy problem \cite{CKF25} to inflow/outflow boundary value problems. It is worth emphasizing that, from the stability viewpoint, establishing the stability of even a single reflected shock at the boundary has been a highly challenging problem. The present paper establishes the stability of general small BV entropy solutions.
    The small BV solution is explicitly constructed as the limit of front-tracking approximate solutions. 
   
    2. To preserve the non-characteristic inflow/outflow structure for small BV solutions, we restrict their state space to the subsonic region. However, we emphasize that a perturbation $U \in \mathcal{S}_{weak}$ may belong to transonic and supersonic regions. There is no smallness assumption for perturbations belonging to the set $\mathcal{S}_{\mathrm{weak}}$, and it does not arise from the front-tracking algorithm.

    3. The impermeable boundary value problem is proved in \cite{Impermeable} by a shortcut argument, extending the solution from the half-space to the whole space.
\end{rem}

\subsection{Literature review}
In the setting of the Cauchy problem, Glimm \cite{Glimm} proved the existence of a small BV solution $U$ for any initial data $U^0$ defined on the whole line and satisfying the smallness conditions analogous to (\ref{small BV initial set}), provided that $\varepsilon$ is sufficiently small. More precisely, there exists a uniform $C>0$ such that for any $t\geq 0$,
\begin{align*}
    \|U(\cdot, t)\|_{BV(\mathbb{R})} \leq C \varepsilon \, .
\end{align*}
The notion of a small BV solution captures the global existence and the uniform boundedness of the total variation for all time, provided that the initial total variation is sufficiently small and close to $U_b$.
Aside from Glimm's method, often referred to as the Glimm scheme or the random choice method, two other general frameworks are known to yield small BV existence for systems of hyperbolic 
conservation laws: the front tracking scheme \cite{BJ98, B92, D76} and the vanishing viscosity method \cite{vvm}. 

Using the $L^1$ front-tracking framework, the uniqueness of small BV solutions for general $n\times n$ hyperbolic systems was established under several structural assumptions, including the Tame Oscillation Condition \cite{Tame1,Tame2}, the Bounded Variation Condition \cite{BVC}, and the Liu entropy condition \cite{Liu}. These results relied on the Standard Riemann Semigroup constructed in
\cite{BC95,BLY99}. More recently, these technical assumptions were removed in \cite{BGu,Liu}. Nevertheless, uniqueness results based on front tracking apply only within the class of weak solutions with small bounded variation. To establish a more general Weak/Strong uniqueness principle, Chen, Krupa, and Vasseur \cite{qualitative} employed an $L^2$-based approach involving, in particular, $a$-contraction with shifts. For $2\times2$ hyperbolic systems endowed with a strictly convex entropy, they proved the stability of small BV solutions within the broader class of bounded entropy weak solutions satisfying the Strong Trace Property.

We now turn to the initial-boundary value problem. For scalar conservation laws, Bardos, le Roux and N\'ed\'elec \cite{BRN79} extended the Kru\v{z}kov theory \cite{K70} to the initial-boundary value problem. Goodman \cite{G83} established the existence of the small BV solutions for systems of hyperbolic conservation laws via the Glimm-scheme, assuming a non-characteristic boundary. Subsequently, Amadori~\cite{A97} proved the existence of small BV solutions by employing a front-tracking scheme, in a framework that also covers characteristic boundaries and allows for general (possibly time-dependent) boundary profiles.
The work of Amadori and Colombo \cite{AC97} provides an $L^1$ well-posedness theory for $2 \times 2$ systems, where uniqueness and stability are ensured within the semigroup they construct. This analysis was later extended by Donadello and Marson \cite{DM07} to $N \times N$ systems with non-characteristic boundaries. Compared to earlier works, Ancona, Marson and Spinolo \cite{AMS24} proposed a different approach to assigning boundary conditions for $N \times N$ hyperbolic conservation laws. Their boundary condition is derived from the corresponding viscous system, and they established the existence of small BV solutions via front tracking scheme. We also refer Gisclon \cite{G96} and Rousset \cite{R05} for the study of the inviscid limit for regular data in finite time.

In this paper, we extend the existence and uniqueness theory of small BV solutions for general small BV initial data, namely the class $\mathcal{S}_{BV,\varepsilon}^0$ defined in \eqref{small BV initial set}, which was first established in the whole space in \cite{CKF25}, to the initial-boundary value problem. We obtain uniqueness among all weak solutions that satisfy the strong trace property, with boundary values belonging to the boundary set.

To put the Strong Trace Property into perspective, we recall that it is highly nontrivial and known only in limited settings.
Strong trace properties were first shown for multi-dimensional scalar conservation laws in \cite{trace1}, 
and the technique was later used to extract additional structural information on solutions (see \cite{tracestruct1, tracestruct2}). 
However, whether general bounded entropy solutions satisfy the Strong Trace Property used here remains open. Weaker trace properties have been established for the isentropic Euler system with $\gamma=3$ in \cite{golding} and for genuinely nonlinear $2\times2$ systems under suitable structural assumptions in \cite{stpT,stpT2}. Related results on the structure of entropy dissipation and regularity for the isentropic Euler system with $\gamma=3$ were obtained in \cite{stpAMT}. The latter two works require entropy assumptions beyond the single physical entropy inequality imposed here.\\

\noindent {\bf $\bullet$ $L^2$ stability theory.} 
For a long time, $L^1$ was regarded as the natural space for studying the stability of shocks; see, for example, \cite{l1stability,l1stability2}. To treat a broader class of weak solutions beyond the small BV setting, an $L^2$-based framework known as $a$-contraction with shifts has been developed. In this framework, the perturbed solution is not required to have bounded variation. A fundamental obstacle to establishing $L^2$-stability is illustrated by the example in \cite[Section~1.1]{CKF25}: even a small perturbation of a shock may produce a large $L^2$-error due to the resulting displacement of the shock.

However, if one allows the shock to move by an appropriate shift, $L^{2}$ stability can be recovered. The idea of introducing a shift was first proposed by Vasseur \cite{Vshift}, and Leger later proved $L^{2}$ contraction for a single shock up to a shift \cite{Lshift} in the scalar case. For systems, the contraction property generally fails (see \cite{weighta2}), but it can be restored by introducing a suitable 
weight, as done in \cite{weighta}.

With the help of the shift and the weight, Chen, Krupa, and Vasseur \cite{qualitative} established the existence and uniqueness result of small BV solutions for general $2\times 2$ hyperbolic systems under the assumption of the existence of a strictly convex entropy. More recently, Chen, Faile, and Krupa \cite{CKF25} strengthened this by deriving a quantitative estimate. Our paper can be viewed as an extension of Chen-Faile-Krupa \cite{CKF25} to the initial-boundary setting. The remaining impermeable case is treated in \cite{Impermeable}. In parallel with the weak-BV stability theory, the stability of small BV solutions among inviscid limits has also been investigated. In 2024, Chen, Kang, and Vasseur \cite{NStoE} proved such a stability result in the setting of the isentropic Euler system and the barotropic Navier-Stokes equations. This result was subsequently extended to certain subsonic boundary configurations, including the impermeable wall problem in \cite{Impermeable} and the outflow problem in \cite{prepinviscidbv}.

Both \cite{qualitative} and \cite{CKF25} use the front-tracking method to construct piecewise constant approximate solutions $V_{\nu}$ that converge to the small BV solution $V$.
However, there is in general no $L^{2}$ contraction between $U \in \mathcal{S}_{\mathrm{weak}}$ and $V_{\nu}$. 
To obtain an $L^{2}$ contraction, the speeds of the shocks in the approximate solution must be modified artificially rather than following their Rankine-Hugoniot speeds, and one instead constructs shifted approximate solutions $\psi_{\nu}$. The dissipative estimate generated by these shifts along the shocks then enables the $L^{2}$ contraction between $U$ and $\psi_{\nu}$.

Since the shock speeds in the approximate solution $V_{\nu}$ and in the shifted approximate solution $\psi_{\nu}$ are different, the graphs of $V_{\nu}$ and $\psi_{\nu}$ generally do not coincide, even when they start from the same initial data.
Chen, Krupa, and Vasseur \cite{qualitative} first proved only a qualitative result, because the front-tracking method they used did not allow them to compare $\psi_{\nu}$ and $V_{\nu}$ directly. Instead, they relied on an additional uniqueness theory under the Bounded Variation Condition to show that the limits of $\psi_{\nu}$ is the small BV solution.

Chen, Faile, and Krupa \cite{CKF25} instead used the front-tracking semigroup framework of Bressan and Colombo \cite{BC95}, which relies on a carefully designed approximate Riemann solver that incorporates interpolated rarefaction and shock curves with artificially assigned wave speeds. These ingredients yield a Lipschitz continuous semigroup in $L^{1}$, which in turn allows one to compare the shifted approximate solution $\psi_{\nu}$ with the approximate solution $V_{\nu}$, even when their graphs do not align. Combined with a sharper quantitative dissipation estimate for shocks, Chen, Faile, and Krupa \cite{CKF25} established a quantitative contraction estimate, thereby recovering and strengthening the qualitative result of Chen, Krupa, and Vasseur \cite{qualitative}.

\subsection{Structure of the paper} 
In Section \ref{sec dissipation}, we recall the quantitative dissipation estimates for shocks and rarefactions, given in Proposition~\ref{Prop:S} and Proposition~\ref{Prop:R} (corresponding to Propositions~3.1 and~3.3 in~\cite{CKF25}), and establish boundary dissipation estimates in Lemma~\ref{lem: inflow boundary} and Lemma~\ref{lem: ouflow boundary}.
These estimates allow us to control the boundary contribution associated with the boundary set for $\mathcal{S}_{\mathrm{weak}}$.

The modified front-tracking method is introduced in Section~\ref{sec:MFTA} to construct piecewise-constant approximate solutions that converge to the small~BV solution. 
We consider two types of approximate solutions as in \cite{qualitative,CKF25}: the unshifted ones, as in \cite{AC97}, which generate a Lipschitz semigroup that is contractive in a metric $d_\nu$ equivalent to $L^1$; and the shifted ones, in which shocks move with the speeds prescribed in Proposition~\ref{Prop:S}, enabling a contraction estimate between $U \in \mathcal{S}_{\mathrm{weak}}$ and shifted approximate solutions later on.

In Section~\ref{sec combination and separation}, we show that the limit of the unshifted approximate solutions satisfying the inflow/outflow boundary condition \eqref{small BV BC}, up to a subsequence, also satisfies (\ref{small BV BC}).
The key observation underlying this conclusion is that the strength of each wave remains bounded by its initial strength at the time of generation (see Lemma~\ref{lem:ithbd}).
Since shocks may merge and rarefactions may be partitioned over time, we introduce a labeling system and a function associated with each label to track the evolution of wave strength.
Finally, we construct the weight function $a$ and prove the main Theorem, Theorem \ref{thm}, in Section~\ref{sec: proof of them}. 

In summary, the goal of this paper is to extend the argument of \cite{CKF25} to the initial-boundary value problem. 
The main analytical challenges are 
to establish the corresponding boundary dissipation estimate (see Section \ref{subsec boundary dissipation estimate}); and, in addition, to show that the limit of the unshifted approximate solutions satisfies the inflow/outflow boundary condition (\ref{small BV BC}) assigned for the small BV solutions, which is achieved in Section \ref{sec combination and separation}.

\subsection{Idea of proof}
\subsubsection{Stability of a reflected shock}
First, let us illustrate the idea of the proof with a single reflected shock as a simple example. We only illustrate the outflow problem, that is a more complicated case.

Let $U_b \in \Omega_{\mathrm{sub}}^{\mathrm{out}}$, and let the initial datum $U_0$ be a small $1$-shock located at $x_0 \in \mathbb{R}_+$, connecting $U_1$ and $U_R$, with Rankine-Hugoniot speed $\sigma_1<0$. Assume that $U_1$ satisfies the outflow boundary condition \eqref{small BV BC}, and that $U_1$ and $U_R$ are close to $U_b$.

Since we consider small BV solutions in the subsonic region, the $1$-shock reaches the boundary at some time $t_0>0$ and is reflected as a $2$-shock, with Rankine-Hugoniot speed $\sigma_2>0$, connecting $U_2$ and $U_R$, where $U_2$ satisfies the outflow boundary condition \eqref{small BV BC}.

Our small BV solution $V$ is 
\begin{equation*}
    V(x,t) = 
    \begin{cases}
        U_1 \, , \; \quad \textup{ if } x<x_0+\sigma_1 t \, , \; t<t_0 \, , \\
        U_2 \, , \; \quad \textup{ if } x<\sigma_2 t \, , \; t>t_0 \, , \\
        U_R \, , \; \quad \textup{ otherwise.}
    \end{cases}
\end{equation*}
We fix $T,R>0$ and focus on the cone of information 
\[
\{(x,t)\, : 0<t<T, \,\, 0<x<R-s(t-T)\},
\]
where $s$, defined in \eqref{def of s}, is a constant chosen strictly larger than the speed of any wave in the shifted and unshifted approximate solutions, and large enough so that the relative entropy flux satisfies \eqref{q controlled by eta}. For simplicity, we assume that the shock $V$ remains inside the cone of information.

\textbf{Step 1.} The dissipation calculations:
Let $U \in \mathcal{S}_{\mathrm{weak}}$. We first estimate the difference between the weak solution $U$ and the shifted approximate solution $\psi$ using the $a$-contraction property.

Thanks to the Strong Trace Property, for any constants $a_1^i, a_2>0$ and any Lipschitz function $h_i(t)$, we have
\begin{align*}
    &\frac{d}{dt} \left(\int_0^{h_i(t)} a_1^i \, \eta(U|U_i)\,dx + \int_{h_i(t)}^{R-s(t- T)} a_2 \, \eta(U|U_R)\,dx \right)\\
    &\leq - a_1^i \Big[ q(U(h_i(t)-,t);U_i) - \dot{h}_i(t)\eta(U(h_i(t)-,t)|U_i)\Big] + a_1^i q(U(0+,t);U_i) \\
    &\quad -a_2 \Big[ q(U((R-s(t-T)-,t);U_R) + s\eta(U(R-s(t-T)-,t)|U_R)\Big] \\
    &\quad +a_2 \Big[ q(U(h_i(t)+,t);U_R) - \dot{h}_i(t)\eta(U(h(t)+,t)|U_R)\Big]\\
    &=: A_1 + A_2 + A_3 + A_4.
\end{align*}
By Lemma~\ref{lem: ouflow boundary}, which provides the boundary dissipation estimates for the outflow case based on the boundary set in Definition~\ref{def:BVP}, we obtain $A_2 \le 0$. The geometry of the set $\mathcal{B}_{out}(U_b)$ as in \eqref{def:Bout} is quite natural and crucial for the proof of Lemma~\ref{lem: ouflow boundary}. However, proving Lemma~\ref{lem: ouflow boundary} is highly non-trivial. Indeed, to show 
$q(U;\Util) \leq 0$ for any $\Util \in \mathcal{B}_{out}(U_b) \cap \Omega_{sub}^{out}$ and $U \in \mathcal{B}_{out}(U_b)$, we separate the boundary set $\mathcal{B}_{out}(U_b)$ into four parts $\mathcal{B}_1,..., \mathcal{B}_4$, and use different strategy for each case $U\in \mathcal{B}_i$. 
The key idea is to use the relative positions of $U$ and $\Util$ for the first case $U\in \mathcal{B}_1 := \mathcal{B}_{out}(U_b) \cap \{u \geq u_b\}$. On the other hand, the other cases $U\in \mathcal{B}_i$, $i=2,3,4$, are handled by using the entropy condition of Hugoniot curve or rarefaction curve.

We have $A_3 \le 0$, by choosing $s$ to satisfy \eqref{q controlled by eta}.

Proposition~\ref{Prop:S} provides the dissipation estimates for shocks. It shows that if we fix $a_2=1$ and choose $a_1^i$ sufficiently close to $a_2$, as in \eqref{prop: s weight}, and if the $i$-shock is propagated with a shift $h_i$, then there exists a constant $K>0$ such that
\[
A_1 + A_4 \le - K \, |U_i - U_R| \, \bigl|\dot h_i(t) - \sigma_i\bigr|^2 .
\]
Hence, we define the weight define the shifted approximate $\psi$ as 
\begin{equation*}
    a(x,t) = 
    \begin{cases}
        a_1^1 \, , \; \quad \textup{ if } x<h_1(t) \, , \; t<t_1 \, , \\
        a_1^2 \, , \; \quad \textup{ if } x<h_2(t) \, , \; t>t_1 \, , \\
        a_2 \, , \; \quad \textup{ otherwise,}
    \end{cases}
    \psi(x,t) = 
    \begin{cases}
        U_1 \, , \; \quad \textup{ if } x<h_1(t) \, , \; t<t_1 \, , \\
        U_2 \, , \; \quad \textup{ if } x<h_2(t) \, , \; t>t_1 \, , \\
        U_R \, , \; \quad \textup{ otherwise,}
    \end{cases}
\end{equation*}
where $t_1$ is the time at which $h_1$ becomes $0$. Note that by \eqref{bound of h 1} and \eqref{bound of h 2}, such a $0 < t_1 < \infty$ exists.

We get
\begin{equation}\label{est:L2dis}
    \begin{aligned}
    &\int_0^R a(\cdot,T)\eta(U|\psi)(\cdot,T)\,dx - \int_0^{R+sT} a(\cdot,0)\eta(U|\psi)(\cdot,0)\,dx\\
    &\leq - K \left(\int_0^{t_1} |U_1 - U_R||\dot{h}_1(t) - \sigma_{1}|^2\,dt + \int_{t_1}^{T} |U_2 - U_R||\dot{h}_2(t) - \sigma_{2}|^2\,dt \right)=:- K \mathcal{D}.
\end{aligned}
\end{equation}

\textbf{Step 2.}  Calculation of the amount of shifting: The parameter $\nu>0$ is associated with the front-tracking construction and tends to zero in the limit, with smaller values of $\nu$ yielding more accurate approximations. By \eqref{def of phi ft}, \eqref{interpolated curve}, and \eqref{speed of shock unshifted}, the unshifted approximate solution coincides exactly with the shock $V$ when $\nu \ll 1$.
Note, however, that in general the unshifted approximate solutions depend on $\nu$.

Lemma~\ref{lem: L1 difference} estimates the difference between the unshifted approximate solution $V$ and the shifted approximate solution $\psi$ through a distance function $d_\nu$ equivalent to $L^1$ norm, showing that for some constant $C>0$,
\begin{align*}
    d_\nu(\psi,V)(T) 
    \leq C\left(\int_{0}^{t_1} |U_1 - U_R||\dot{h}_1(t) - \sigma_1|\,dt + \int_{t_1}^T |U_2 - U_R||\dot{h}_2(t) - \sigma_2|\,dt \right),
\end{align*}
Together with H\"older's inequality, we get
\begin{equation}\label{est:L1s}
    \begin{aligned}
    \|\psi(\cdot,T)-V(\cdot,T)\|_{L^1(0,R)} \leq C d_\nu(V,\psi)(T) \leq CT^{1/2}(|U_1-U_R| + |U_2 - U_R|)^{1/2}\mathcal{D}^{1/2}.
\end{aligned}
\end{equation}

\textbf{Step 3.} Putting everything together:
Recall that the state space $\mathcal{V}_0$ is bounded. By \eqref{est:L2dis}, \eqref{est:L1s} and H\"older's inequality, we obtain
\begin{align*}
    \|U(\cdot,T) - V(\cdot,T)\|_{L^2(0,R)}^2
    &\leq C\|U(\cdot,T) - V(\cdot,T)\|_{L^1(0,R)} \\
    &\leq \sqrt{R}\|U(\cdot,T) - \psi(\cdot,T)\|_{L^2(0,R)} + \|\psi(\cdot,T) - V(\cdot,T)\|_{L^1(0,R)}\\
    &\leq C_{R,T}\sqrt{\int_0^{R+sT} a(\cdot,0)\eta(U|\psi)(\cdot,0)\,dx} \\
    &= C_{R,T}\sqrt{\int_0^{R+sT} a(\cdot,0)\eta(U|V)(\cdot,0)\,dx} \\
    &\leq C_{R,T} \|U(\cdot,0) - V(\cdot,0)\|_{L^2(R+sT)}.
\end{align*}
Hence, we obtain the inequality \eqref{thm eq}.
\begin{figure}[h!]
    \centering
    \begin{subfigure}[b]{0.45\textwidth}
        \centering
        \includegraphics[width=\textwidth]{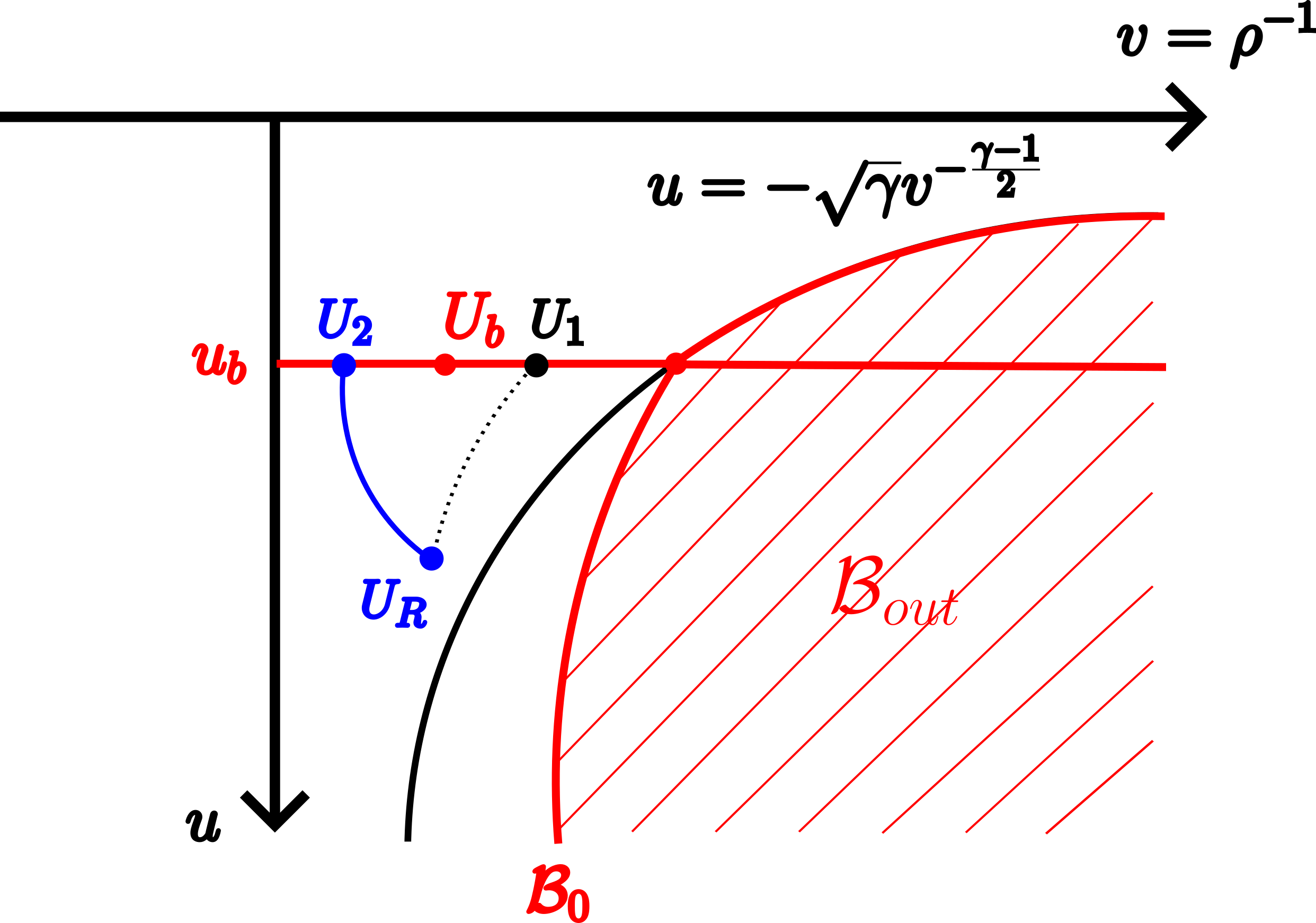}
        \caption{Reflected shock waves}
    \end{subfigure}
    \quad
    \begin{subfigure}[b]{0.45\textwidth}
        \centering
        \includegraphics[width=\textwidth]{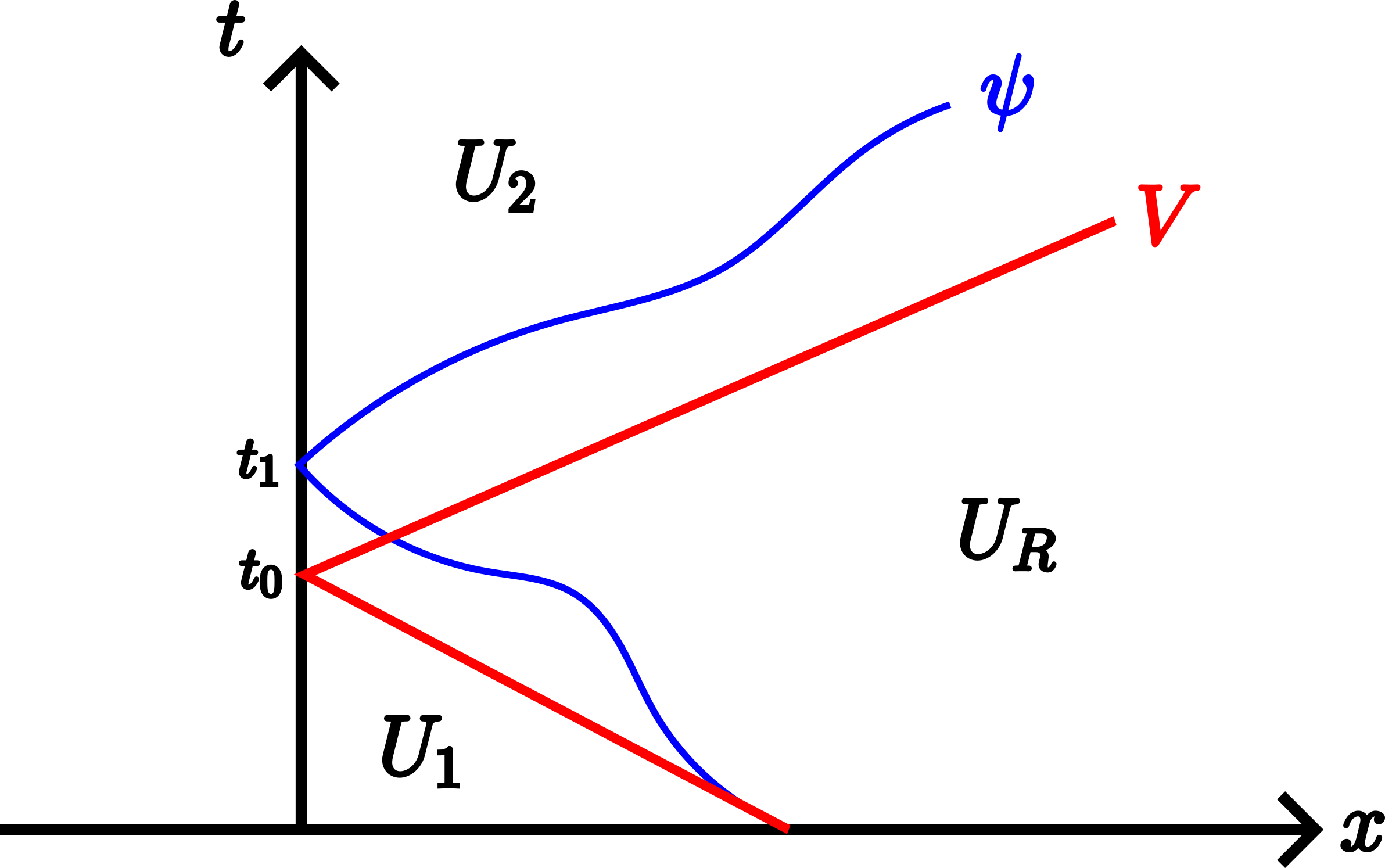}
        \caption{Unshifted and shifted shock waves}
    \end{subfigure}
    \caption{Reflected shock wave}
    \label{fig:RRS}
\end{figure}

\subsubsection{Boundary condition satisfied by limits of approximate solutions}
A major new difficulty in the boundary value problem, compared to the Cauchy problem, is that although every unshifted approximate solution satisfies the inflow/outflow boundary condition \eqref{small BV BC}, it is highly nontrivial to show that this property is preserved in the limit.
The difficulty is that wave strengths are not preserved by interaction or reflection at the boundary: shocks may merge and rarefactions may be partitioned. So, the total variation crossing a fixed vertical line cannot be uniformly estimated by simply tracking strengths of fronts.
To overcome this difficulty, we introduce a labeling system that associates each wave with a collection of elements carrying tracked strengths in Section \ref{subsec label}. 
We keep track of the strength of each element instead of each front, according to simple updating rules at every event (interaction or boundary reflection), so that each wave strength of the approximate solution is always recovered as the sum of the tracked strengths of its associated elements. The crucial estimate is that each tracked strength never exceeds, up to a uniform multiplicative constant, its previous strength at every event, as in Lemma \ref{lem:iint} and \eqref{est:stpm}.
Since each element can cross any fixed vertical line at most twice during its lifetime, summing over all elements yields a uniform bound on the total variation of the approximate solutions $V_\nu(x_*,\cdot)$ along every vertical line $x=x_*$. 
On top of that, since the approximate solutions remain in a sufficiently small neighborhood of the subsonic state $U_b$, every $1$-wave propagates with an $O(1)$-negative speed, while every $2$-wave propagates with an $O(1)$-positive speed, uniformly in $\nu$. 
Thus, the spatial derivatives of approximate solutions admit a uniform bound in the space of finite Radon measures on time.
Therefore, the above uniform boundedness provides strong convergence in $C_x([0,R];L_t^1([0,T]))$.  
As a consequence, the inflow/outflow boundary condition is preserved in the limit as in Proposition \ref{lem:BC}.

\section{Boundary sets}\label{sec:bdset}
In this section, we provide the necessary and sufficient conditions for the existence of boundary layer solutions. We then characterize the boundary set $\mathcal{B}_{in/out}$ and derive expressions for $\mathcal{B}_{in/out}$.
\subsection{Boundary layer solutions}
For given two state $U_- = (\rho_-,\rho_- u_-)$ and $U_+ = (\rho_+, \rho_+ u_+)$, the boundary layer (BL) solution $U^{BL}(x;U_-,U_+):=(\rho^{BL}(x;U_-,U_+),\rho^{BL} u^{BL}(x;U_-,U_+))$ is defined as a solution to
	\begin{equation}\label{eq:BL}
	\begin{aligned}
		&\left(\rho u \right)' = 0, \quad x\geq 0, \quad '=\frac{d}{dx},\\
		&\left(\rho u^2 + p(\rho)\right)' = u'',\\
		&U(0)=U_-, \quad U(+\infty) = U_+. 
	\end{aligned}
\end{equation}
From \eqref{eq:BL}$_1$, it follows immediately that the momentum of the BL solution is conserved: 
\[
\rho(x) u(x)  = \rho_+ u_+ = \rho_- u_-.
\]
In other words, for given boundary state $U_-$, the state $U_+$ should lie on the curve $BL(U_-)$ defined by
\begin{equation*}
	BL(U_-):=\left\{U=(\rho,\r u) \in \Omega : u = \frac{\rho_- u_- }{\rho} \right\}.
\end{equation*}
In what follows, we provide the necessary and sufficient conditions for the existence of the BL solution in both the outflow and inflow settings. To this end, we first define the Hugoniot locus associated with the $i$-th characteristic field centered at $U_-$:
\begin{equation}\label{def:Hug}
S_i(U_-) := \left\{U\,:\, u = u_- +(-1)^{i} (\rho-\rho_-)\left|\frac{p(\rho) - p(\rho_-)}{\rho \rho_- (\rho - \rho_-)}\right|^{1/2}, \, \, u \leq u_- \right\}.
\end{equation}
\begin{rem}
    Although the right end state is often used as the reference state for wave curves in problems on the half-line, we consistently use the left end state as the reference state for both the Hugoniot loci and the boundary layer curves. This convention is particularly convenient for the boundary dissipation estimates established later in Section~\ref{subsec boundary dissipation estimate}.
\end{rem}
The following lemma characterizes the existence of BL solutions for the outflow problem. It is a reformulation of \cite[Proposition~4.1]{M01}, with the left end state taken as the reference state, whereas \cite[Proposition~4.1]{M01} uses the right end state.
\begin{lem}\label{lem:BLout}
(Existence of BL solution in the outflow setting) Consider the case $u_-<0$ with $U_+ \in BL(U_-)$. Then, the BL solution connecting $U_-$ to $U_+$ exists uniquely if and only if one of the following holds:
\begin{enumerate}
	\item[(i)] $U_- \in \Omega_{super}^-$ and 
	\[
		u_* \geq u_+ > u_- \,\, \text{or } u_+ < u_-,
	\]
	where $U_* := (\rho_*,
    \r_* u_*)$ denotes the unique intersection point of the curve $BL(U_-)$ with the transonic curve $\Gamma_{trans}^-$.
	\item[(ii)] $U_- \in \Gamma_{trans}^{-}$ and $u_+ < u_- < 0$.
	\item[(iii)] $U_- \in \Omega_{sub}^-$ and $u_+ < \check{u}<0$, where $\check{U}=(\check{\rho},\check{\rho} \check{u})\neq U_-$ denotes the (unique) intersection point of 
    $S_2(U_-)$ and the curve $BL(U_-)$. 
\end{enumerate}
\end{lem}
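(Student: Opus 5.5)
The plan is to reduce the boundary layer problem \eqref{eq:BL} to a scalar first-order ODE for $u$ and read off existence from the sign of its right-hand side. Set $m:=\rho_-u_-<0$. Then $\rho=m/u$ along the layer, so $u$ keeps the sign of $u_-$, and $U_+\in BL(U_-)$ automatically. Integrating \eqref{eq:BL}$_2$ once from $0$ to $x$ gives
\begin{equation*}
u'(x)=F(u):=m\,(u-u_-)+p\!\left(\frac{m}{u}\right)-p(\rho_-).
\end{equation*}
A solution with $u(0)=u_-$ and $u(x)\to u_+$ as $x\to+\infty$ exists exactly when $u_+$ is a zero of $F$ with $F\neq0$ strictly between $u_-$ and $u_+$, and $F$ has the sign of $u_+-u_-$ there. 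In that case $u$ is monotone and approaches $u_+$. One more condition is needed: if $u_+$ is a simple zero, the approach to $u_+$ takes infinite $x$-time, so the orbit is global. This does hold at a simple zero, and also at a degenerate zero, since $F$ is $C^1$ on $u<0$ and the orbit cannot reach an equilibrium in finite time. If instead $F$ does not vanish between them, the solution leaves every compact set and no layer exists. Uniqueness follows from uniqueness for the autonomous Lipschitz ODE, because the condition at $x=0$ fixes the orbit. We also have to treat the degenerate case $u_+=u_-$ (the constant layer) consistently with the statement; it is excluded in (i)–(iii) except as the trivial solution.

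The core step is the analysis of $F$ on $(-\infty,0)$. Write $g(u):=p(m/u)=|m|^\gamma|u|^{-\gamma}$. This function is strictly convex in $u$ on $u<0$, so $F$ is strictly convex with $F(u_-)=0$. Hence $F$ has at most one further zero $\hat u$. The sign of $F'(u_-)$ decides on which side of $u_-$ that zero lies. Compute
\begin{equation*}
F'(u_-)=m-p'(\rho_-)\frac{m}{u_-^{2}}=\rho_-u_-\Bigl(1-\frac{\gamma\rho_-^{\gamma-1}}{u_-^{2}}\Bigr).
\end{equation*}
Since $m<0$, this is positive when $|u_-|<c(\rho_-)$ (subsonic), zero on $\Gamma_{trans}^-$, and negative when $u_-<-c$ (supersonic $\Omega^-_{super}$). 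We also use $F(u)\to+\infty$ as $u\to0^-$ and $F(u)\sim m u\to+\infty$ as $u\to-\infty$. Together these give the following picture.

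In the supersonic case (i), $F'(u_-)<0$. Then $F<0$ on $(u_-,\hat u)$ with $\hat u>u_-$, and $F>0$ on $(-\infty,u_-)$. Moving left from $u_-$ the orbit goes to $-\infty$ only in the sense of having $u'$ of the wrong sign. The orbits are therefore: to the right, trajectories starting at $u_-$ are not possible directly, since $u'=F(u_-)=0$ at the initial point. This exposes the real subtlety.

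The correct formulation requires care. The point $u_-$ itself is an equilibrium, so the boundary layer must be understood as an orbit whose closure contains $u_-$ at $x=0$ in the second-order sense. More precisely, one does not integrate from $x=0$ using $u'(0)=0$. Instead, the constant of integration is determined by $U_+$: integrate from $+\infty$, so that
\begin{equation*}
u'=G(u):=m(u-u_+)+p(m/u)-p(\rho_+),
\end{equation*}
with $u(0)=u_-$ free and $G(u_-)\neq0$ allowed. I would redo the analysis with this $G$, which is convex with zero set $\{u_+,u^\sharp\}$. A layer exists iff $u_-$ lies in an interval between consecutive zeros of $G$ whose flow direction points toward $u_+$, or iff $u_-=u_+$. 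In the left-reference language, the second zero of $G$ is exactly the point $\check U$ on $S_2(U_-)$ for the subsonic case. The Rankine–Hugoniot relation for a zero-speed shock with mass flux $m$ is $m(u_1-u_2)=p(\rho_2)-p(\rho_1)$, which is $G=0$ and identifies it with the Hugoniot locus \eqref{def:Hug}. In the supersonic case the relevant point is the tangency point $U_*$ on $\Gamma^-_{trans}$, where $G'(u_*)=0$ gives the sonic condition. The three cases then follow by locating $u_-$ relative to $u_+$, $u^\sharp$ and the minimum of the convex $G$, which sits at the sonic point $u_*$.

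The main obstacle is this bookkeeping. One must translate "the direction of flow between the zeros of $G$, which are parametrized by $U_+$" into conditions phrased with $U_-$ as reference, via the Lax inequalities on $S_2$ and the monotonicity of $u$. I would do it by fixing $m$ and plotting $H(u):=mu+p(m/u)$, a convex function with its minimum at the sonic $u_*$. Then $G=H-H(u_+)$, and each case reduces to comparing the levels $H(u_-)$ and $H(u_+)$ and their side relative to $u_*$.
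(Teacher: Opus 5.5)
Your final route is right, and it is the standard one. The paper gives no proof of its own here; it simply defers to \cite[Proposition~4.1]{M01}. The argument behind that result is the phase-line analysis you end up with: integrate once, fix the constant at $x=+\infty$, and study $u'=G(u)=H(u)-H(u_+)$, where $H(u)=mu+p(m/u)$ is strictly convex on $(-\infty,0)$, decreasing on the supersonic side $u<u_*$ and increasing on the subsonic side.

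As written, though, the first half must be deleted rather than patched. Integrating from $x=0$ silently imposes $u'(0)=0$, which is not part of \eqref{eq:BL}, and it makes $u_-$ an equilibrium. So nothing derived from $F$ survives, including your paragraph on case (i).

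What replaces it is the following:
\begin{itemize}
\item If $u\to u_+$, then $u'\to H(u_+)-C$, which forces $C=H(u_+)$. Then $u(0)=u_-$ fixes the orbit, and a layer exists iff the component of $\{G\neq 0\}$ containing $u_-$ flows toward $u_+$.
\item If $u_+>u_*$, then $G'(u_+)=H'(u_+)>0$, so a subsonic $U_+$ is repelling and is reached only by the constant orbit.
\item If $u_+<u_*$, the condition is $u_-<u^\sharp(u_+)$, where $u^\sharp(u_+)\in(u_*,0)$ is the other root of $H=H(u_+)$.
\item If $u_+=u_*$ (a double zero, approached in infinite $x$-time), the condition is $u_-<u_*$. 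This is why $u_+=u_*$ is admitted in (i) but not in (ii) or (iii).
\end{itemize}

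Your sentence ``the second zero of $G$ is exactly the point $\check U$'' conflates two level sets. The zero $u^\sharp$ depends on $u_+$, whereas $\check U$ depends only on $U_-$. At the threshold $u_+=\check u$, it is $u^\sharp(u_+)$ that equals $u_-$, not $\check u$. The correct translation uses that $u\mapsto u^\sharp(u)$ is an involution swapping $u_-$ and $\check u$. For $u_->u_*$ and $u_+<u_*$, monotonicity of $H$ on each side of $u_*$ gives
\[
u_-<u^\sharp(u_+)\iff H(u_-)<H(u_+)\iff u_+<\check u ,
\]
where $\check u<u_*$ is the supersonic root of $H(u)=H(u_-)$. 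To see that this point lies on $S_2(U_-)$: the Rankine--Hugoniot relations with zero speed reduce to $[H]=0$ at fixed $m$. Moreover $\check u<u_-$ and $\check\rho<\rho_-$, which puts $\check U$ on the $2$-branch of \eqref{def:Hug}. When $u_-\le u_*$, the condition $u_-<u^\sharp(u_+)$ holds automatically for every $u_+<u_*$, which yields (i) and (ii).

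Finally, your remark on $U_+=U_-$ is correct: the constant layer always exists, so the lemma must be read for $U_+\neq U_-$.
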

In the case of the inflow problem, the existence of boundary layer solutions is characterized by the following lemma.
\begin{lem}\label{lem:BLin}\cite[Proposition 4.2]{M01}
(Existence of BL solution in the inflow setting)
Suppose that $u_->0$ and $U_+ \in BL(U_-)$. Then, the BL solution connecting $U_-$ to $U_+$ exists uniquely if and only if one of the following holds:
\begin{enumerate}
	\item $U_- \in \Omega_{super}^+$ and $0<u_+ < \check{u}$, where $\check{U}=(\check{\rho},\check{\r}\check{u})\neq U_-$ denotes the (unique) intersection point of 
    $S_1(U_-)$ and the curve $BL(U_-)$.
	\item $U_- \in \Gamma_{trans}^+$ and $0<u_+<u_-$.
	\item $U_- \in \Omega_{sub}^+$ and $0<u_+ < u_*$, where $U_*:=(\rho_*,\r_* u_*)$ denotes the unique intersection point of the line $BL(U_-)$ with the transonic curve $\Gamma_{trans}^+$.
\end{enumerate}
\end{lem}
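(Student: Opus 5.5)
The plan is to reduce the boundary layer system \eqref{eq:BL} to a scalar autonomous ODE and read off existence from a phase-line analysis. Since $\rho u \equiv m := \rho_- u_- > 0$ is constant, write $\rho = m/u$ and integrate the momentum equation once from $+\infty$ (where $u' \to 0$). This gives
\[
u' = f(u) := m(u - u_+) + p(m/u) - p(m/u_+),
\]
with $u(0)=u_-$ and $u(+\infty)=u_+$. A BL solution exists, and is then unique by autonomy and monotonicity, if and only if $f(u_-)\neq 0$ or $u_-=u_+$ does not occur; $u_+$ is a zero of $f$; $f$ has no zero strictly between $u_-$ and $u_+$; and the sign of $f$ on that interval drives $u$ toward $u_+$, with $u_+$ attracting from that side. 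The argument therefore comes down to locating the zeros of $f$ and its sign pattern.

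Next, I would study the function $g(u) := mu + p(m/u)$ on $u>0$. We have $g'(u) = m - p'(m/u)\, m/u^2 = (m/u^2)\,(u^2 - p'(\rho))$. Hence $g$ is strictly convex, decreasing where $u < c(\rho)$ and increasing where $u > c(\rho)$, with its minimum exactly at $\lambda_1 = 0$, i.e.\ at $U_*\in\Gamma^+_{trans}$ on $BL(U_-)$. Since $f(u) = g(u) - g(u_+)$, the zero $u_+$ is linearly attracting from above or below according to the sign of $g'(u_+)$. Moreover, $f(u_-)=0$ holds exactly when $U_+$ and $U_-$ satisfy the Rankine–Hugoniot condition with zero speed. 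In other words, the second root $\check u\neq u_-$ of $g(u)=g(u_-)$ is the intersection of $BL(U_-)$ with the zero-speed Hugoniot locus. I would check that, on the branch $u<u_-$ relevant here, this is $S_1(U_-)$ as defined in \eqref{def:Hug}: the sign $(-1)^1$ together with $u\le u_-$ and $\rho>\rho_-$ matches.

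With this in hand, the three cases follow by tracking $u$ from $u_-$. Write $\rho_+ u_+^2 + p(\rho_+) = g(u_+)$, and note that a solution must descend monotonically toward $u_+$ with no intermediate zero of $f$.
\begin{enumerate}
\item If $U_-\in\Omega_{super}^+$, then $u_->u_*$, and the level set of $g$ through $u_-$ contains $\check u<u_*$. The trajectory must satisfy $g(u)>g(u_+)$ along the whole path. For $u_+\in(\check u, u_-)$ one has $g(u_+)<g(u_-)$, but the orbit starting at $u_-$ moves away from $u_+$ or fails to be attracted. Only when $u_+<\check u$ do we get $g(u_+)>g(u_-)$ with $g'(u_+)<0$, so that $u$ decreases from $u_-$ to $u_+$ and the sign of $f$ is consistent on $(u_+,u_-)$. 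Here it is essential to be careful with the signs, since $u'=f(u)$ and $u$ must decrease.
\item If $U_-\in\Gamma_{trans}^+$, then $u_-=u_*$ is the minimum of $g$, so $\check u=u_-$ and the condition becomes $u_+<u_-$.
\item If $U_-\in\Omega_{sub}^+$, then $u_-<u_*$. For any $u_+<u_*$ other than $u_-$, $g$ is monotone on the interval between them and $g'(u_+)<0$, which gives attraction. For $u_+\ge u_*$, either $g'(u_+)\ge 0$, giving repulsion from the relevant side, or $g$ has an interior critical point, which produces an obstruction.
\end{enumerate}

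Finally, I would verify necessity by showing that in every excluded configuration either $f$ vanishes between $u_-$ and $u_+$, or $f$ points away from $u_+$ near $u_-$, or $u_+$ is repelling, so no monotone orbit can connect the two states. Positivity $u_+>0$ is automatic from $m>0$. The main obstacle I expect is the sign bookkeeping in cases (1) and (3): deciding for which $u_+$ the orbit from $u_-$ actually reaches $u_+$ rather than merely having $f(u_+)=0$. This requires combining the convexity of $g$ with the requirement that $f$ keep one sign on the open interval between $u_-$ and $u_+$, and confirming that the orientation matches $x\to+\infty$.
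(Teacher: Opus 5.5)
\textbf{Overall.} Your reduction is the right one. It is the standard phase-line argument behind \cite[Proposition 4.2]{M01}, which the paper quotes without proof. Integrating once gives $u'=f(u)=g(u)-g(u_+)$, where $g(u)=mu+p(m/u)$ is strictly convex with its minimum at the sonic point $u_*$. The condition $f(u_-)=0$ is exactly the zero-speed Rankine--Hugoniot condition, which identifies $\check u$ with $S_1(U_-)\cap BL(U_-)$.

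\textbf{The gap: your reachability criterion.} You decide which $u_+$ are reachable by testing linear stability through the sign of $g'(u_+)$. The correct necessary and sufficient condition (for $u_-\neq u_+$) is simpler: $f$ must have the sign of $u_+-u$ on the half-open interval from $u_-$ (included) to $u_+$ (excluded). Convergence to $u_+$ is then automatic, since the orbit is monotone and bounded and its limit is a zero of $f$. Using convexity of $g$, this becomes:
\begin{itemize}
\item for $u_+<u_-$, existence holds iff $g(u_-)<g(u_+)$;
\item for $u_+>u_-$, existence holds iff $g'(u_+)\le 0$, i.e.\ $u_+\le u_*$.
\end{itemize}

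\textbf{Where it fails.} Your case (3) breaks at $u_+=u_*$. There $g'(u_+)=0$, but $f(u)=g(u)-g(u_*)>0$ for all $u<u_*$. So $u_*$ is semi-stable and attracts from below, which is precisely the side on which $u_-<u_*$ lies. The orbit from $u_-$ increases monotonically to $u_*$; this is a degenerate layer with $u_*-u\sim 2/(g''(u_*)\,x)$. Your claimed ``repulsion from the relevant side'' is therefore false at this point.

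\textbf{Consequence for the statement.} The ``only if'' direction of (3) cannot be proved as written; the correct range is $0<u_+\le u_*$. This closed range is also what the paper actually uses:
\begin{itemize}
\item it describes $\mathcal{B}_{in}(U_b)=BL(U_b)\cap(\Omega_{sub}^+\cup\Gamma_{trans}^+)$, which includes the transonic point;
\item the proof of Lemma~\ref{lem: inflow boundary} works on the closed range $a\in[\tilde a,\infty)$.
\end{itemize}
Similarly, the constant layer $U_+=U_-$ falls outside (1) and (2) unless trivial layers are explicitly excluded. Your proof should establish the corrected statement rather than the strict inequality.

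\textbf{A sign slip in case (1).} Since $u$ must decrease from $u_-$ to $u_+<u_-$, the requirement is $g(u)<g(u_+)$ on $(u_+,u_-]$, not $g(u)>g(u_+)$ as you wrote. By convexity this reduces to $g(u_-)<g(u_+)$, i.e.\ $u_+<\check u$, which is the conclusion you do reach. With this endpoint criterion in place of the linear-stability test, all three cases follow in a few lines.
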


\subsection{Euler boundary sets $\mathcal{B}_{in/out}(U_b)$}
Based on Lemma~\ref{lem:BLin} and Lemma~\ref{lem:BLout}, we can explicitly describe the boundary sets $\mathcal{B}_{in/out}(U_b)$ (see figure~\ref{fig:BC}).

Let $U_b \in \Omega_{\mathrm{sub}}^{\mathrm{in/out}}$.  For the inflow case, the 
boundary set $\mathcal{B}_{\mathrm{in}}(U_b)$ can be described as
\[
\mathcal{B}_{\mathrm{in}}(U_b)
   = BL(U_b) \cap (\Omega_{\mathrm{sub}}^{+} \cup \Gamma_{\mathrm{trans}}^{+}).
\]
For the outflow case, Definition~\ref{def:BVP} provides the formulation
\begin{equation}\label{def:Bout}
    \mathcal{B}_{out}(U_b) = \{U:u=u_b\} \cup \{U: u \geq u_b, \, \, U \in \Gamma_{trans}^- \cup \Omega_{super}^-\}  \cup \mathcal{B}_0(U_b) \cup \mathcal{B}_-(U_b),
\end{equation}
where $\mathcal{B}_0(U_b)$ and $\mathcal{B}_-(U_b)$ are defined by 
\[
\mathcal{B}_0(U_b) := \{U: \exists \, U_* \, \, \text{such that } U \in S_2(U_*), \, \, s_2(U_*,U) = 0,\,\, u_* = u_b \},
\]
and
\[
\mathcal{B}_-(U_b) := \{U: \exists \, U_* \, \, \text{such that } U \in S_2(U_*), \, \, s_2(U_*,U) < 0,\,\, u_* = u_b \}.
\]
Here, $s_2(U_*,U)$ denotes the speed of a 2-shock wave connecting $U_*$ (left) and $U$ (right), and $S_2(U_*)$ is the Hugoniot locus associated with the second characteristic field centered at $U_*$, defined in \eqref{def:Hug}.

\begin{figure}[h!]
    \centering
    \begin{subfigure}[b]{0.45\textwidth}
        \centering
        \includegraphics[width=\textwidth]{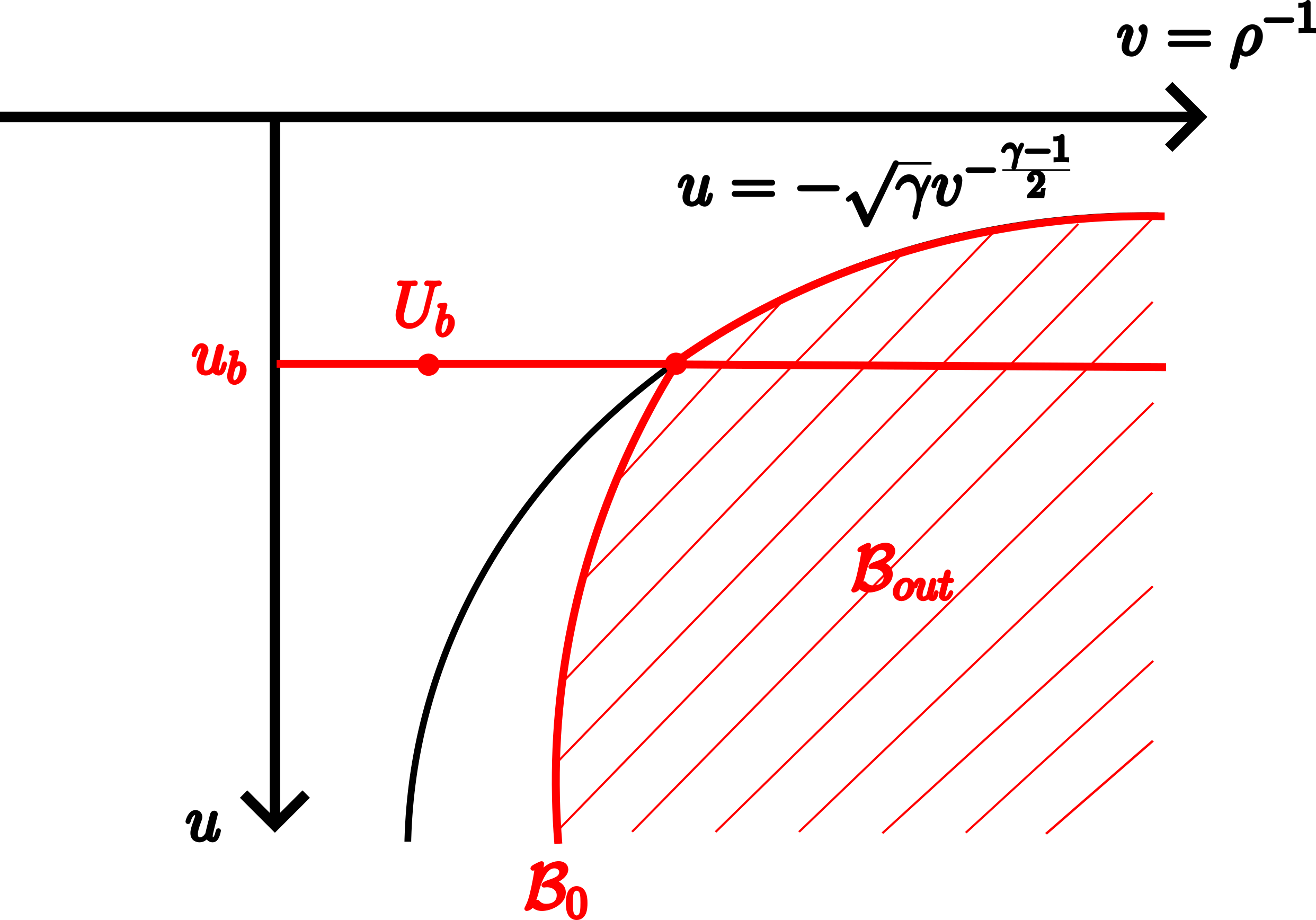}
        \caption{Outflow boundary set \( \mathcal{B}_{\mathrm{out}} \)}
    \end{subfigure}
    \quad
    \begin{subfigure}[b]{0.45\textwidth}
        \centering
        \includegraphics[width=\textwidth]{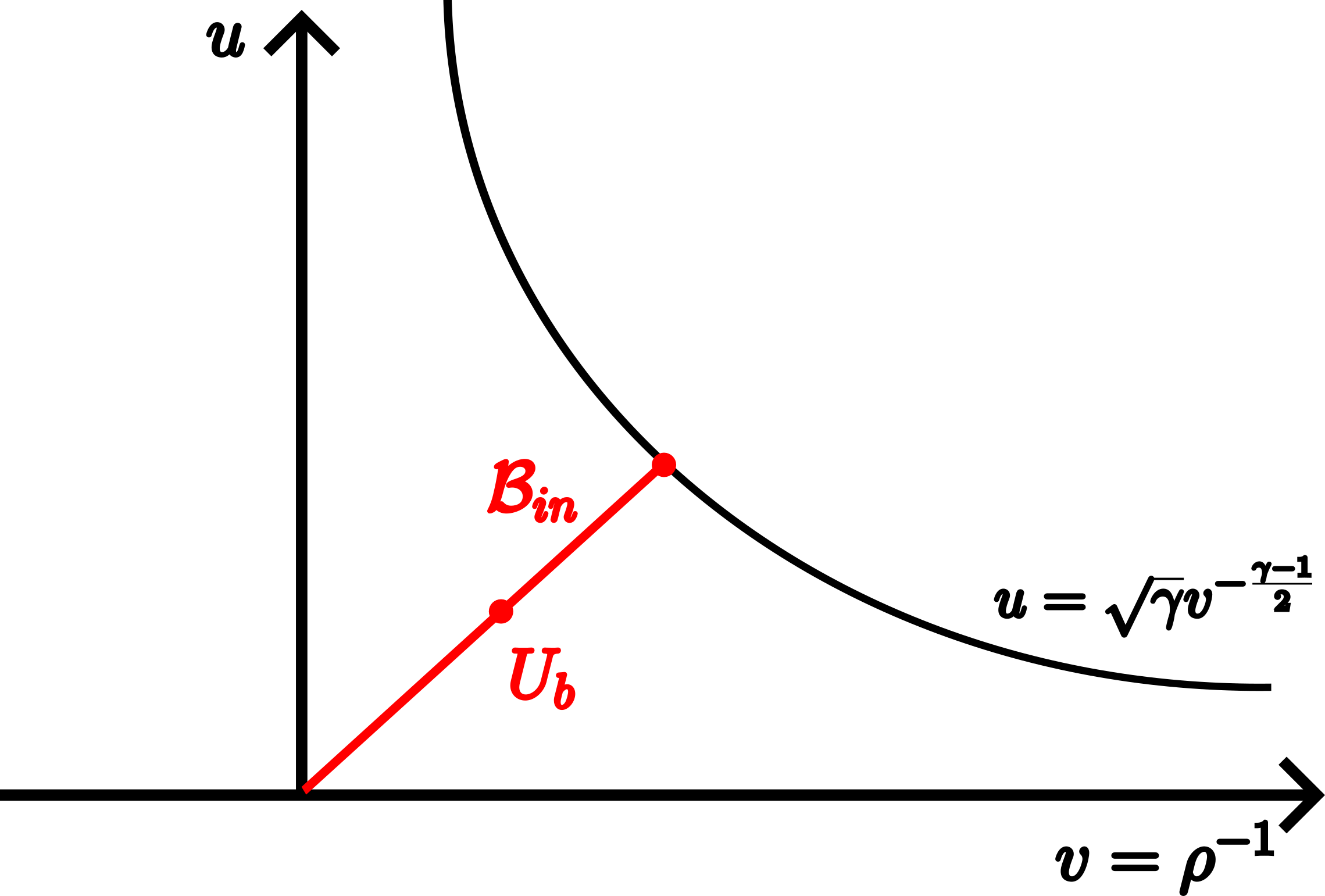}
        \caption{Inflow boundary set \( \mathcal{B}_{\mathrm{in}} \)}
    \end{subfigure}
    \caption{The figures above illustrate the boundary sets for the outflow and inflow cases in $(v=\rho^{-1},u)$ coordinates}
    \label{fig:BC}
\end{figure}
\vspace{0.2cm}
Suppose that the small BV solution, say $V$, remains in a small neighborhood of $U_b$, which is contained in the subsonic region $\Omega_{sub}$. Then, there exists a positive constant $c$ (depending on $U_b$) such that
\[
\lambda_1(V(x,t)) \leq -c \leq 0 \leq c \leq \lambda_2(V(x,t)), \,\, \text{for a.e. } (x,t) \in \mathbb{R}_+ \times \mathbb{R}_+.
\] 
In this regime, the boundary sets for the isentropic Euler system form one-dimensional manifolds as follows: for a.e. $t>0$,
\begin{align}\label{small BV BC}
    u(0,t) = u_b < 0 \, \textup{ for the outflow and } \r(0,t)u(0,t) = \r_b u_b > 0 \, \textup{ for the inflow.}
\end{align}
These cases correspond to particular boundary conditions within the (NC) in Amadori \cite{A97}. Therefore, to construct the small BV solutions, one can use the front tracking algorithm for the (NC) case in Amadori \cite{A97} and Amadori and Colombo \cite{AC97}.

However, note that large and wild solutions $U \in \mathcal{S}_{weak}$ may lie in transonic or supersonic regions. Consequently, for such solutions, one must consider the full boundary sets $\mathcal{B}_{out}(U_b)$ and $\mathcal{B}_{in}(U_b)$.

\section{Dissipation estimates}\label{sec dissipation}
\setcounter{equation}{0}
We apply the relative entropy method introduced by Dafermos \cite{remDaf} and DiPerna \cite{remDi}. Given a convex entropy function $\eta$, we define the associated pseudo-distance between any $a,b \in \mathcal{V}_0 \times \mathcal{V}$ by
\begin{align*}
    \eta(a|b)=\eta(a)-\eta(b)-\nabla\eta(b)\cdot(b-a) \, . 
\end{align*}
We call this quantity the relative entropy of $a$ with respect to $b$. Since $\mathcal{V}_0$ is bounded, the relative entropy $\eta(a \mid b)$ is equivalent to $|a-b|^2$.
Correspondingly, we define the relative entropy flux by, for any $a,b \in \mathbb{R}^2$,
\begin{align*}
    q(a;b)=q(a)-q(b)-\nabla \eta(b)\cdot(F(a)-F(b)) \,,
\end{align*}
where $F(U)$ denotes the flux of the system \eqref{eq:E}.

If $U$ is a weak solution of \eqref{eq:E} satisfying the entropy inequality \eqref{entropy inequality}, then for any constant state $b \in \mathcal{V}$, we have
\begin{align*}
    (\eta(U \mid b))_t + (q(U ; b))_x \leq 0 \, .
\end{align*}
The system \eqref{eq:E} admits an entropy 
\[
\eta(U) = \frac{1}{2}\rho u^2 + \frac{\rho^{\gamma}}{\gamma - 1}.
\]
The corresponding relative entropy and relative entropy flux are given by 
\begin{align*}
    &\eta(U_1|U_2) = \frac{1}{2}\rho_1 |u_1  - u_2|^2 + \frac{p(\rho_1|\rho_2)}{\gamma - 1},\\
    &q(U_1;U_2) = u_1 \eta(U_1|U_2) + (p(\rho_1) - p(\rho_2))(u_1 - u_2),
\end{align*} 
where $U_1 = (\rho_1,\rho_1 u_1), U_2 = (\rho_2,\rho_2 u_2)$ and the relative pressure $p(\rho_1|\rho_2)$ is defined by 
\[
p(\rho_1|\rho_2) = p(\rho_1) - p(\rho_2) - p'(\rho_2)(\rho_1 - \rho_2).
\]
In Section~\ref{subsec: dissipation for S R}, we recall the contraction property of the relative entropy for both shock and rarefaction fans established in \cite{CKF25}. The contraction property for rarefaction fans, even in multi-dimensional
settings, can also be found in \cite{qualitative,disR1,disR2}.
In Section~\ref{subsec boundary dissipation estimate}, we prove that the relative entropy flux between $U \in \mathcal{S}_{weak}$ and a small BV solution $V$ is non-positive at the boundary. All these properties will be used to carry out the contraction argument in Section \ref{sec: proof of them}.

\subsection{Quantitative dissipation estimates for shocks and rarefactions}\label{subsec: dissipation for S R}
\begin{prop}\cite[Proposition 3.1]{CKF25}\label{Prop:S}
	Let $d \in \mathcal{V}$. There exist constants $\hat{\lambda}, C_1, K, \varepsilon_0 >0$ such that the following holds. Consider any $1$-shock or $2$-shock $(U_L, U_R, \sigma_{LR})$ with shock strength $s_0 := |U_L - U_R|$. Moreover, assume 
    \[
     U_L, U_R \in B_{\e_0}(d) \subset \Omega_{sub} \quad \text{and} \quad  \sup_{U \in B_{{2\e_0}}(d)}\lambda_1(U) <0< \inf_{U \in B_{{2\e_0}(d)}}\lambda_2(U). 
    \]
    Then, for any $a_1, a_2 > 0$ satisfying 
    \begin{equation}\label{prop: s weight}
        \begin{aligned}
            &1- 2C_1 s_0 \leq \frac{a_2}{a_1} \leq 1 - \frac{C_1 s_0}{2} \quad \text{if } (U_L, U_R) \, \, \text{is a } 1\text{-shock},\\
		    &1 +\frac{C_1 s_0}{2} \leq \frac{a_2}{a_1} \leq 1+ 2C_1 s_0 \quad \text{if } (U_L, U_R) \, \, \text{is a } 2\text{-shock},\\
        \end{aligned}
    \end{equation}
	we obtain the following: for any $U \in \mathcal{S}_{weak}, T>0, T_{start} \in [0,T]$, and $x_0 \in \mathbb{R}_+$, there exists a Lipschitz shift function $h:[T_{start}, T] \to \mathbb{R}$ with $h(T_{start}) = x_0$ such that the dissapation functional satisfies the bound: 
	\begin{align*}
		&a_2 \Big[ q(U(h(t)+,t);U_R) - \dot{h}(t)\eta(U(h(t)+,t)|U_R)\Big]\\
		&\qquad  - a_1 \Big[ q(U(h(t)-,t);U_R) - \dot{h}(t)\eta(U(h(t)-,t)|U_R)\Big] \leq -a_2 K s_0 (\dot{h}(t) - \sigma_{LR})^2,
	\end{align*}
	for almost every $t \in [T_{start},T]$.

	In addition, if $(U_L, U_R)$ is a $1$-shock, then
    \begin{equation}\label{bound of h 1}
        -\frac{\hat{\lambda}}{2} \leq \dot{h}(t) \leq \sup_{U \in B_{2\varepsilon_0}(d)}\lambda_1(U) <0, \quad \text{for a.e. } t \in [T_{start},T]. 
    \end{equation}
	Similarly, if $(U_L,U_R)$ is a $2$-shock wave, then
    \begin{equation}\label{bound of h 2}
        0< \inf_{U \in B_{2\varepsilon_0}(d)}\lambda_2(U) \leq \dot{h}(t) \leq \frac{\hat{\lambda}}{2}, \quad \text{for a.e. } t \in [T_{start},T].
    \end{equation}
\end{prop}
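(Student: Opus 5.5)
This statement is quoted from \cite[Proposition 3.1]{CKF25}, and the proof there applies verbatim. My plan is to reproduce its structure: define the shift as the solution of an ODE, then verify the dissipation inequality pointwise in time. The Strong Trace Property (Definition~\ref{def stp}) gives one-sided traces $U_\pm(t) = U(h(t)\pm,t)$ along any Lipschitz curve. With these, I would define $h$ as a Filippov solution of
\[
\dot h(t) = V\bigl(U(h(t)+,t),U(h(t)-,t)\bigr), \qquad h(T_{start}) = x_0 .
\]
The velocity field $V$ would be chosen (following Leger and Kang--Vasseur) as follows. It equals $\sigma_{LR}$ corrected by a term proportional to the ``distance'' of the traces from the shock. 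For a $1$-shock, it is truncated so as to lie in $[-\hat\lambda/2, \sup_{B_{2\varepsilon_0}(d)}\lambda_1]$, and analogously for $2$-shocks. A natural concrete choice is
\[
V(u_+,u_-) = \sigma_{LR} - C\bigl(\text{sign-weighted } \eta(u_\pm|U_R) \text{ terms}\bigr),
\]
capped by these bounds. Existence of a Lipschitz $h$ then follows from Filippov theory, and the bounds \eqref{bound of h 1}--\eqref{bound of h 2} hold by construction after passing to the Filippov convex hull. This is legitimate because the target intervals are convex.

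For the dissipation inequality, at a.e.\ $t$ one has either $U_+=U_-$ (no jump in $U$ at $h$), or $(U_-,U_+,\dot h)$ satisfies Rankine--Hugoniot and the entropy inequality. Writing $D(u,\dot h) := q(u;U_R)-\dot h\,\eta(u|U_R)$, the quantity to control is
\[
a_2 D(U_+,\dot h) - a_1 D(U_-,\dot h).
\]
I would split into cases according to whether the traces lie near $U_L$, near $U_R$, or far from both.

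\begin{itemize}
\item Far from both: for small $\varepsilon_0$, the truncation makes $\dot h$ strictly separated from the characteristic speeds. Then $-D$ is coercive, of order $\eta(u|U_R)$, which dominates $Ks_0(\dot h-\sigma_{LR})^2$.
\item $U$ continuous at $h$: the expression reduces to $(a_2-a_1)D(u,\dot h)$. The weight ratio \eqref{prop: s weight} makes $a_2-a_1 \sim \mp C_1 s_0 a_1$ with the sign giving dissipation. The shift formula is designed so that this yields $-Ks_0(\dot h-\sigma_{LR})^2$, via a Taylor expansion of $D$ around $U_L$ and $U_R$.
\item $U$ jumps at $h$ along a shock: use the entropy dissipation along the Hugoniot curve, which is the monotonicity-in-strength lemma of \cite{weighta}.
\end{itemize}

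The main obstacle is the quantitative nature of the bound. The right-hand side $-Ks_0(\dot h-\sigma_{LR})^2$ must hold uniformly for all $U\in\mathcal V_0$, including vacuum and large states. Obtaining this requires the sharp expansions of $D$ near the shock in terms of $s_0$, as carried out in \cite{CKF25}. I would invoke that analysis rather than redo it, noting that it is purely pointwise in $(U_-,U_+)$ and hence insensitive to the half-line geometry.

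The only new point is that $h$ must stay in $\mathbb{R}_+$. Trace existence is only guaranteed along curves valued in $[0,\infty)$, so the proposition is applied on the time interval before $h$ reaches $0$, as done in the reflected-shock example. I would also confirm that the state spaces match, namely that $\mathcal S_{weak}$ consists of bounded entropy solutions with traces, which is exactly the hypothesis of \cite{CKF25}.
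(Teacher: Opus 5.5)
Your overall route is the paper's: the paper gives no argument for Proposition~\ref{Prop:S} beyond importing \cite[Proposition 3.1]{CKF25}. Your caveat that the shift can only be used until it reaches $x=0$ is also how the paper uses it, through the time $t_1$ in the reflected-shock example and the ``stop and relabel'' in Section~\ref{subsec proof of them main theorem}, though the statement leaves this implicit.

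The problem is your sketch of the internals, which is built on the wrong functional. You took the display literally, with $U_R$ as the reference state in the $a_1$ term, but that is a misprint. What \cite{CKF25} controls, and what the paper actually uses ($A_1+A_4$ in the Idea of proof, and the fluxes $F_i^\pm$ computed against $\psi(h_i\pm)$), is $a_2D_R(U_+)-a_1D_L(U_-)$ with $D_L(u)=q(u;U_L)-\dot h\,\eta(u|U_L)$.

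With $U_R$ in both terms the inequality is false. Take $U$ locally constant near $(x_0,T_{start})$, equal to $u^*=U_R+\delta r_1(U_R)$. The left side is then $(a_2-a_1)D_R(u^*)=O(s_0\delta^2)$. So if the inequality held, it would force $|\dot h-\sigma_{LR}|\lesssim\delta$. But then $D_R(u^*)\approx\frac{\delta^2}{2}\bigl(\lambda_1(U_R)-\sigma_{LR}\bigr)\,r_1^{T}\nabla^2\eta(U_R)\,r_1<0$ by the Lax condition. Hence for a $1$-shock $(a_2-a_1)D_R(u^*)>0$ as soon as $\delta\ll s_0$, contradicting the required upper bound $-a_2Ks_0(\dot h-\sigma_{LR})^2\le 0$. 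Your continuous-case step (``reduces to $(a_2-a_1)D(u,\dot h)$ and yields $-Ks_0(\dot h-\sigma_{LR})^2$'') therefore fails for every choice of shift.

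With the correct functional, the continuous case reads $(a_2-a_1)D_R(u)+a_1\bigl(D_R(u)-D_L(u)\bigr)$. The second term is affine in $(u,F(u))$, of size $O(s_0)$, and of indefinite sign. The whole expression is thus only $O(s_0)$, and the estimate comes from balancing the two terms through the window \eqref{prop: s weight} and the choice of $\dot h$. That balancing is exactly the quantitative analysis of \cite{CKF25} you propose to import.

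For the same reason, your ``far from both'' bullet should not rely on an order-one coercivity of $-D$. The dissipation available there is only of order $s_0\,\eta$: it is the weight gap times a $D$ made one-signed by an extreme $\dot h$. For a $1$-shock, where $a_2<a_1$, the sign you need is $D\gtrsim\eta$, not $-D\gtrsim\eta$.

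So reading the proposition with $U_L$ in the $a_1$ term, citing \cite{CKF25}, and stopping the shift at the boundary-hitting time gives the same argument as the paper. The verification you actually sketch, however, targets a false inequality and would not go through.
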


\begin{rem}
The shift $h$ is introduced in order to obtain the contraction estimate later in Section \ref{sec: proof of them}. The construction of the corresponding shifted approximate solution is given in Section~\ref{sec:SFTA}.
\end{rem}

\begin{prop}\cite{qualitative}\label{Prop:R}
    There exists a constant $K>0$ such that the followings are true. For any rarefaction wave $U^R(y)$ with $v_L \leq y \leq v_R$, denote 
    \[
        \sigma^R := |v_L - v_R| + \sup_{v \in [v_L,v_R]}|U_L - U^R(y)|, \quad U^R(v_L) = U_L, \, \,  U^R(u_R) = U_R.
    \]
    Then, for any $U \in \mathcal{S}_{weak}$ and $v_L \leq v \leq v_R$, and any $t >0$, we have 
    \begin{equation}\label{prop R ineq}
    \begin{aligned}
          &\int_0^t \Big(q(U(\tau v+,\tau);U_R) - q(U(\tau v-,\tau);U_L) - v\big(\eta(U(\tau v+,\tau)|U_R) - \eta(U(\tau v-,\tau)|U_L) \big) \Big)d\tau\\
          &\qquad \leq K(\sigma^R)^2 t.
    \end{aligned}
    \end{equation}
\end{prop}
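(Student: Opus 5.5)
The plan is to compare the ray functional in \eqref{prop R ineq} with the same functional built on the exact self-similar rarefaction $\bar U(x,\tau):=U^R(x/\tau)$, for which the relative entropy method gives a signed dissipation. The discrepancy is then controlled by quantities quadratic in the wave size $\sigma^R$. Throughout I use only the entropy inequality \eqref{entropy inequality} and the Strong Trace Property of $U\in\mathcal S_{weak}$ along the Lipschitz curve $X(\tau)=\tau v$. I also use that $\eta(U|b)\simeq|U-b|^2$ on the bounded set $\mathcal V_0$.

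\textbf{Step 1 (reduction to one side of the ray).} By the Strong Trace Property, the weak form of \eqref{eq:E} across $x=\tau v$ gives the Rankine--Hugoniot relation $F(U_+)-vU_+=F(U_-)-vU_-$ for the traces $U_\pm=U(\tau v\pm,\tau)$. The entropy inequality gives $q(U_+)-v\eta(U_+)\le q(U_-)-v\eta(U_-)$. The map $U\mapsto q(U;b)-v\eta(U|b)$ equals $q(U)-v\eta(U)$ minus the affine part $\nabla\eta(b)\cdot(F(U)-vU)$, plus a constant in $U$. Hence the integrand is bounded above by
\[
\Phi_v(U_-;U_R)-\Phi_v(U_-;U_L),\qquad \Phi_v(W;b):=q(W;b)-v\,\eta(W|b).
\]
Using $\partial_b\Phi_v(W;b)=-\nabla^2\eta(b)\big(F(W)-F(b)-v(W-b)\big)$, the difference is
\[
-\int_{v_L}^{v_R}\nabla^2\eta(U^R(y))\,\partial_yU^R(y)\cdot\big(F(U_-)-F(U^R(y))-v(U_--U^R(y))\big)\,dy .
\]
Here I expand the relative flux around $U^R(y)$ and use that $\partial_y U^R$ is parallel to the eigenvector $r_i(U^R(y))$ with $\lambda_i(U^R(y))=y$.

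\textbf{Step 2 (splitting the integrand).} Write $F(W)-F(b)-v(W-b)=(DF(b)-v)(W-b)+F(W|b)$, with $F(W|b)$ quadratic in $W-b$. The part containing $F(W|b)$, paired with $\nabla^2\eta\,r_i$, is the classical rarefaction term. By genuine nonlinearity and the sign of $\partial_yU^R$ it has a favorable sign, or at worst is bounded by $C|v_R-v_L|\,|U_--U^R|^2$, and I would keep it for Step 3. The linear part equals $(\lambda_i(U^R(y))-v)\,\ell_i\cdot(U_--U^R(y))$ up to the symmetrizer. It is the dangerous term, since it is only first order in $U_--U^R$ and has no sign.

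\textbf{Step 3 (handling the linear term via the weak formulation).} I would integrate in $\tau\in(0,t)$ and recast the linear-in-$U_-$ terms as fluxes of the conserved quantities across the rays $x=\tau v$ and $x=\tau y$. I would then apply the conservation law \eqref{eq:E} on the wedges $\{\tau y<x<\tau v\}$ (or $\{\tau v<x<\tau y\}$). This converts $\int_0^t(F(U)-vU)(\tau v\pm,\tau)\,d\tau$ into the corresponding flux along ray $y$, plus the mass of $U$ inside the wedge at time $t$. Next I would subtract the same identity for $\bar U$, which solves \eqref{eq:E} exactly, and then apply the relative entropy inequality for $\bar U$ on the wedges:
\[
\partial_\tau\eta(U|\bar U)+\partial_x q(U;\bar U)\le-\partial_x\nabla\eta(\bar U)\cdot F(U|\bar U)\le0,
\]
where the last inequality is the rarefaction sign. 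The leftover quantities are then $\eta(U|\bar U)$ on wedges at time $t$ and constants from $\bar U$. Each is bounded by $C(|v_R-v_L|+\sup|U_L-U^R|)^2t$, since the wedges have width $\lesssim|v_R-v_L|\,t$ and $\bar U$ varies by at most $\sup|U_L-U^R|$. This yields $K(\sigma^R)^2t$.

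The main obstacle is Step 3. The unsigned first-order term must be absorbed with only the single physical entropy and strong traces available. The delicate bookkeeping is to arrange the wedge integrations so that every leftover term is genuinely quadratic, i.e.\ a width factor $|v_R-v_L|$ times an amplitude factor, rather than merely linear in $\sigma^R$. If the direct wedge argument leaves a linear remainder, I would instead work directly with the exact-rarefaction weighted relative entropy on $\{x<\tau v\}$ and $\{x>\tau v\}$. In that version I would compare with the constant states $U_L,U_R$ only at the end, via the estimate $|\Phi_v(W;b)-\Phi_v(W;b')|\le C|b-b'|\,(|W-b|+|b-b'|)$ combined with the time-integrated control of $|U_--\bar U|$ furnished by the dissipation.
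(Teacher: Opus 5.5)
Your Steps 1 and 2 are correct and already prove the proposition. The problem is the diagnosis at the end of Step 2. It sends you into a Step 3 that you do not need, and that, as written, does not work. The paper gives no proof of its own here; the proposition is imported from \cite{qualitative}.

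\textbf{The linear part is harmless.} Since $\nabla^2\eta\,DF$ is symmetric and $DF(U^R(y))\,\partial_yU^R(y)=y\,\partial_yU^R(y)$, the linear part equals $-(y-v)\,\nabla^2\eta(U^R(y))\,\partial_yU^R(y)\cdot(U_--U^R(y))$. Here $|y-v|\le|v_R-v_L|$ because both $y$ and $v$ lie in $[v_L,v_R]$. Moreover $U_-$ and $U^R$ range over bounded sets, and $|\partial_yU^R|$ is bounded by genuine nonlinearity near $U_b$. So the $y$-integral of this term is at most $C|v_R-v_L|^2\le C(\sigma^R)^2$ for a.e.\ $\tau$. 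Being ``first order in $U_--U^R$'' is irrelevant: $U\in\mathcal S_{weak}$ is large, so $U_--U^R$ is just $O(1)$, and only powers of the wave size matter.

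\textbf{The quadratic part is the one that needs a sign.} Your fallback bound $C|v_R-v_L|\,|U_--U^R|^2$ is only linear in $\sigma^R$ and would not suffice. The sign does hold globally, but for a structural reason, not merely by genuine nonlinearity. One has $F(W|b)=(0,\rho_W(u_W-u_b)^2+p(\rho_W|\rho_b))$, so this part equals $-\partial_yu^R(y)\,[\rho_W(u_W-u^R)^2+p(\rho_W|\rho^R)]$. This is $\le 0$ because $u$ increases along rarefaction curves of both families, including when $W$ is the vacuum state. Combined with Step 1, the integrand is $\le K|v_R-v_L|^2$ for a.e.\ $\tau$. This is stronger than the stated time-integrated bound. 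It also applies verbatim to fronts born at interior interaction points rather than at the origin, which is the situation in front tracking.

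\textbf{Step 3 as written fails.} The claim that the wedge integrals of $\eta(U|\bar U)$ at time $t$ are $O((\sigma^R)^2t)$ is false; for arbitrary $U$ they are only $O(|v_R-v_L|\,t)$. What is quadratic is the difference between $E_{\mathrm{fan}}(t)=\int\eta(U|\bar U)\,dx$ and $E_{\mathrm{jump}}(t)=\int_{x<vt}\eta(U|U_L)\,dx+\int_{x>vt}\eta(U|U_R)\,dx$. That difference is supported in the fan and bounded by $Ct\,|v_R-v_L|\sup_y|U^R(y)-U_L|$.

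A correct wedge-type argument goes as follows.
\begin{enumerate}
\item The entropy dissipation measure $\mu\ge 0$ of $U$ is the same for every constant reference state. Hence $E_{\mathrm{jump}}(t)-E_{\mathrm{jump}}(0)$ equals the time integral in the statement minus $\mu$ restricted off the ray.
\item By the rarefaction sign, $E_{\mathrm{fan}}(t)-E_{\mathrm{fan}}(0)\le-\mu$.
\item For a fan centered at $\tau=0$ the two functionals coincide at $\tau=0$. Therefore the time integral is at most $(E_{\mathrm{jump}}-E_{\mathrm{fan}})(t)\le K(\sigma^R)^2t$, modulo localization.
\end{enumerate}
This produces exactly the quoted form: time-integrated, centered at $\tau=0$, with $\sigma^R$ combining fan width and amplitude. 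However, it requires localization and centering, and your pointwise Steps 1--2 need neither.
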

\subsection{Boundary dissipation estimates}\label{subsec boundary dissipation estimate}
We provide boundary dissipation estimates for the outflow and inflow cases, respectively, and ensure that, in the proof of the main theorem (Section \ref{subsec proof of them main theorem}), the boundary term $F_0^+$ vanishes.
\begin{lem}\label{lem: ouflow boundary}
(Boundary estimates for the outflow problem)

For any $U_b \in \Omega_{sub}^{out}$ with $u_b<0$, and for any constant states $\Util \in \mathcal{B}_{out}(U_b) \cap \Omega_{sub}^{out}$ and $U \in \mathcal{B}_{out}(U_b)$, we have 
\[
q(U;\Util) \leq 0.
\]
\end{lem}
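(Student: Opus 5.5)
The plan is to first pin down $\Util$ and then split according to where $U$ lies in the decomposition \eqref{def:Bout}. \textbf{Step 0: $\Util$ has $\tilde u=u_b$.} A point of $\mathcal{B}_0(U_b)\cup\mathcal{B}_-(U_b)$ is the right state of a Lax $2$-shock with speed $s_2\le 0$. Hence $\lambda_2(U)<s_2\le 0$, so it lies in $\Omega_{super}^-$. The set $\{u\ge u_b\}\cap(\Gamma^-_{trans}\cup\Omega^-_{super})$ is also not subsonic. Therefore $\Util\in\Omega_{sub}^{out}\cap\mathcal{B}_{out}(U_b)$ forces $\Util=(\tilde\rho,\tilde\rho u_b)$ with $\lambda_1(\Util)<0<\lambda_2(\Util)$, that is, $-u_b<\sqrt{\gamma\tilde\rho^{\gamma-1}}$. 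We then use the explicit formula
\[
q(U;\Util)=u\,\eta(U|\Util)+(p(\rho)-p(\tilde\rho))(u-u_b).
\]

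\textbf{Case $\mathcal{B}_1=\mathcal{B}_{out}\cap\{u\ge u_b\}$.} If $u=u_b$, then $q=u_b\,\eta(U|\Util)\le 0$. If $u>u_b$, then $U\in\Gamma_{trans}^-\cup\Omega_{super}^-$, so $u_b<u\le-\sqrt{\gamma\rho^{\gamma-1}}<0$. Comparing with the subsonic condition on $\Util$ gives
\[
\sqrt{\gamma\rho^{\gamma-1}}\le -u<-u_b<\sqrt{\gamma\tilde\rho^{\gamma-1}},
\]
so $\rho<\tilde\rho$. Both terms of $q$ are then nonpositive: $u<0$ and $\eta\ge0$ for the first, and $(p(\rho)-p(\tilde\rho))<0<u-u_b$ for the second. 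This is the ``relative position'' argument.

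\textbf{Cases $U\in\mathcal{B}_0\cup\mathcal{B}_-$.} Here $U\in S_2(U_*)$ with $u_*=u_b$, $u\le u_b$, and speed $s:=s_2(U_*,U)\le0$. Using the Rankine--Hugoniot relation, one checks directly that
\[
q(U;U_*)-s\,\eta(U|U_*)=[q]-s[\eta],
\]
which is the entropy dissipation of the admissible shock and hence is $\le 0$. This gives $q(U;U_*)\le s\,\eta(U|U_*)\le 0$.

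It remains to change the reference state from $U_*$ to $\Util$, which is the main obstacle. Both states lie on the line $\{u=u_b\}$, so we study $g(r):=q(U;(r,ru_b))$ as a function of $r$ and need $g(\tilde\rho)\le0$ given $g(\rho_*)\le0$. A direct computation gives
\[
g'(r)=-\frac{u\,p''(r)(\rho-r)}{\gamma-1}-p'(r)(u-u_b)=-\gamma r^{\gamma-2}\big(u(\rho-r)+r(u-u_b)\big)=-\gamma r^{\gamma-2}\big(u\rho-u_b r\big).
\]
Thus $g$ has at most one critical point, at $r=u\rho/u_b$, and that point is a maximum: the bracket $u\rho-u_br$ is increasing in $r$ because $u_b<0$. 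So $g$ increases on $(0,u\rho/u_b)$ and decreases beyond it.

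The plan is therefore to locate $\tilde\rho$ relative to $\rho_*$ and $u\rho/u_b$, using the Hugoniot relation between $U_*$ and $U$, the condition $s\le0$, and subsonicity of $\Util$. If $\tilde\rho\ge\max(\rho_*,u\rho/u_b)$, then monotonicity gives $g(\tilde\rho)\le g(\rho_*)\le0$. If instead $\tilde\rho$ falls on the increasing branch, we evaluate $g$ at the right endpoint of the admissible subsonic interval, or bound $g$ at its maximum directly via the explicit formula. The remaining points of $\{u=u_b\}$ with $U$ supersonic are already covered by the $u=u_b$ computation in the first case.
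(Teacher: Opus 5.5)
\textbf{Gap: the case $U\in\mathcal{B}_-$ with $\tilde\rho<\rho_*$ is not proved.} Several parts of your argument are correct: Step 0, the case $\mathcal{B}_1$, the bound $q(U;U_*)\le s\,\eta(U|U_*)$, and the formula $g'(r)=-\gamma r^{\gamma-2}(\rho u-u_b r)$. Combined with the Rankine--Hugoniot relation $\rho_*u_b-\rho u=s(\rho_*-\rho)\le 0$, i.e.\ $\rho_*\ge r_0:=\rho u/u_b$, they give $g(\tilde\rho)\le g(\rho_*)\le 0$ for every $\tilde\rho\ge\rho_*$. You must state $\rho_*\ge r_0$ explicitly: the condition $\tilde\rho\ge\max(\rho_*,r_0)$ alone does not place $[\rho_*,\tilde\rho]$ on the decreasing branch. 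This is a tidy replacement for the paper's Cases II and III. It also settles $U\in\mathcal{B}_0$ completely, because $s=0$ forces $r_0=\rho_*$, so $\max g=g(\rho_*)\le 0$.

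The remaining case $U\in\mathcal{B}_-$, $\tilde\rho<\rho_*$ (that is, $U$ below $S_2(\widetilde U)$, the paper's Case IV) is where your last paragraph falls short. Your dichotomy omits $r_0<\tilde\rho<\rho_*$. There $g$ is decreasing, so monotonicity gives $g(\tilde\rho)\ge g(\rho_*)$, the useless direction. On the increasing branch, evaluating at the right endpoint of the subsonic interval gives nothing; the only available bound is $g(\tilde\rho)\le g(r_0)$. So everything hinges on the sign of the maximum $g(r_0)$.

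If $W_0:=(r_0,r_0u_b)$ is sonic or supersonic, the constant-flux computation from the inflow lemma does give $g(r_0)<0$. If $W_0$ is subsonic, which does occur for $U\in\mathcal{B}_-$ with small $|s|$, the explicit formula alone cannot work. Indeed $U$ and $W_0$ share the flux $m=\rho u<0$, so along $\{\rho u=m\}$ one has $q(V;W_0)=f(\rho_V/r_0)$ with $f'(a)=\frac{m(a-1)}{r_0^2a^3}\big(m^2-\gamma a^{\gamma+1}r_0^{\gamma+1}\big)$. Hence $f$ increases on $(0,\tilde a)$ and decreases on $(\tilde a,1)$ to $f(1)=0$, where $\tilde a<1$ is the sonic ratio. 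Consequently $q(\cdot;W_0)>0$ at supersonic states of flux $m$ near the sonic point. The sign of $g(r_0)$ therefore depends on where $U$ sits on that curve, and the hypothesis $s_2(U_*,U)<0$ must be used a second time, which your plan does not do.

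\textbf{How the paper closes it, and a fix within your framework.} The paper solves the whole-line Riemann problem from $\widetilde U$ to $U$: a 1-shock, possibly followed by a 2-shock or a 2-rarefaction. It then sums the relative-entropy inequalities (reference $\widetilde U$) across each wave. The key facts are:
\begin{enumerate}
\item $s_2(U_1,U)$ increases in $\rho_1$ and $\rho_m<\rho_*$, so $s_2(U_m,U)<s_2(U_*,U)\le 0$;
\item in the rarefaction case, $\lambda_2(U)<0$ and $s_1<\lambda_2(U_m)$.
\end{enumerate}
To stay within your framework instead, compare with the supersonic state $\check U$ of flux $m$ joined to $W_0$ by a zero-speed 2-shock; the entropy inequality gives $f(\check\rho/r_0)=q(\check U;W_0)\le 0$. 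Let $\hat W$ be the zero-speed left partner of $U$ on its backward 2-Hugoniot curve. Since $s_2(\cdot,U)$ increases with the left density and $s_2(U_*,U)<0$, we get $\hat\rho>\rho_*\ge r_0$. Now $m^2/\rho+p(\rho)$ is increasing on the subsonic part of $\{\rho u=m\}$ and decreasing on the supersonic part. Comparing its values at $r_0<\hat\rho$ and at $\check\rho,\rho$ yields $\rho<\check\rho$. Therefore $g(r_0)=f(\rho/r_0)<f(\check\rho/r_0)\le 0$.
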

\begin{proof}
	Recall from \eqref{def:Hug} that $S_2(\Util)$ denotes the Hugoniot locus associated with the second characteristic field centered at $\Util$, defined by
    \[
    S_2(\Util) := \left\{U\,:\, u = \util + (\rho-\rhotil)\left|\frac{p(\rho) - p(\rhotil)}{\rho \rhotil (\rho - \rhotil)}\right|^{1/2}, \, \, u \leq \util \right\}.
    \]
	We begin by splitting the set $\mathcal{B}_{out}(U_b)$ into four regions: $\mathcal{B}_{out}(U_b)= \mathcal{B}_1 \cup \mathcal{B}_2 \cup \mathcal{B}_3 \cup \mathcal{B}_4$, where
    \[
    \begin{aligned}
        &\mathcal{B}_1 := \mathcal{B}_{out}(U_b) \cap \{u \geq u_b\},\\
        &\mathcal{B}_2 := \mathcal{B}_{out}(U_b) \cap \left\{U\,:\, u_b > u > \util + (\rho-\rhotil)\left|\frac{p(\rho) - p(\rhotil)}{\rho \rhotil (\rho - \rhotil)}\right|^{1/2} \right\},\\
        &\mathcal{B}_3 := \mathcal{B}_{out}(U_b) \cap S_2(\Util),\\
        &\mathcal{B}_4 := \mathcal{B}_{out}(U_b) \cap \left\{U\,:\,u < \util + (\rho-\rhotil)\left|\frac{p(\rho) - p(\rhotil)}{\rho \rhotil (\rho - \rhotil)}\right|^{1/2} \right\}.
    \end{aligned}
    \]
    Roughly speaking, $\mathcal{B}_2$ and $\mathcal{B}_4$ correspond to the portions of $\mathcal{B}_{out}(U_b)$ lying above and below the 2-shock curve, respectively. In what follows, denote $U=(\r,\r u)$ and notice that $\widetilde{U} \in \mathcal{B}_{out}(U_b)\cap \Omega_{sub}^{out}$ implies $\tilde{u} = u_b$.
    
$\bullet$ \textit{Case I:} Suppose that $U \in \mathcal{B}_1$. First, using $u<0$, we have
\begin{align*}
	q(U;\Util) &= u\eta(U|\Util) + (p(\rho) - p(\rhotil))(u-\util)\\
	&\leq (p(\rho) - p(\rhotil))(u-\util).
\end{align*}
Since $U\in\mathcal{B}_1$ implies that either $u=u_b$, or $u>u_b$ and $\r < \r_b$, we have
\[
(p(\rho) - p(\rhotil))(u-\util) \leq 0 \, \,  \text{on } \mathcal{B}_1.
\] 
Thus, we obtain $q(U;\Util) \leq 0$.

$\bullet$ \textit{Case II:} Now, consider the case $U \in \mathcal{B}_2$. Then, there exists (unique) state $U_* = (\rho_*, \rho_* u_*) \in \{u=u_b\}$ such that $\rhotil > \rho_*$ and $U \in S_2(U_*)$. Indeed, $U_*$ is the unique intersection of the line $u=u_b$ with $S_2^r(U)$, where $S_2^r(U)$ denotes the 2-Hugoniot locus with $U$ taken as its right end state. From the entropy identity, we have 
\[
q(U;\Util) = q(U;U_*) + q(U_*;\Util) - (\nabla \eta(U_*) - \nabla \eta(\Util))\cdot (F(U_*) - F(U)) =: I_1 + I_2 + I_3.
\]
We now show that $I_i \leq 0$ for each $i=1,2,3$. 

Since $U_*$ (left) and $U$ (right) are connected by a 2-shock curve, the entropy inequality yields
\[
q(U;U_*) - q(U_*;U_*) \leq s (\eta(U|U_*) - \eta(U_*|U_*)),
\]
where $s$ denotes the shock speed. Since $s\leq0$ from $U \in \mathcal{B}_0(U_b) \cup \mathcal{B}_-(U_b)$ by \eqref{def:Bout}, it follows that
\[
I_1 = q(U;U_*) \leq s \eta(U|U_*) \leq 0.
\]
Next, since $u_* = \util = u_b$, we apply the Case I to obtain $I_2 = q(U_*;\Util) \leq 0$.

Thus, it remains to show $I_3 \leq 0$. Recall that
\begin{align*}
	\nabla \eta(U) = \begin{pmatrix}
		-\frac{u^2}{2} + \frac{\gamma}{\gamma - 1}\rho^{\gamma -1} \\ u
	\end{pmatrix}, \quad F(U) = \begin{pmatrix}
		\rho u \\ \rho u^2 + p(\rho)
	\end{pmatrix}.
\end{align*}
Using $u_* = \util = u_b$, we find that 
\begin{align*}
	I_3 &= - (\nabla \eta(U_*) - \nabla \eta(\Util))\cdot (F(U_*) - F(U))\\
	&=-\frac{\gamma}{\gamma -1}\big(\rho_*^{\gamma -1} - \rhotil^{\gamma -1}\big)(\rho_* u_* - \rho u).
\end{align*}
Since $U_*$ and $U$ are connected by a 2-shock curve with non-positive speed $s$, the Rankine-Hugoniot condition yields 
\[
\rho_* u_* - \rho u = s (\rho_* - \rho) \leq 0.
\]
Combining this with the fact that $\rho_* < \rhotil$, we deduce that $I_3 \leq 0$.

Therefore, we conclude that 
\[
q(U;\Util) = I_1 + I_2 + I_3 \leq 0.
\]

$\bullet$ \textit{Case III:} Suppose that $U \in \mathcal{B}_3$. Since $\Util$ (left) and $U$ (right) are connected by the 2-shock curve with non-positive speed $s$, the entropy inequality yields
\[
q(U;\Util) = q(U;\Util) - q(\Util;\Util) \leq s (\eta(U|\Util) - \eta(\Util|\Util)) \leq 0.
\]

$\bullet$ \textit{Case IV:} Finally, suppose that $U \in \mathcal{B}_4$. Then, consider the whole space Riemann problem connecting $\Util$ (left) to $U$ (right). From the assumption $U \in \mathcal{B}_4$, the corresponding Riemann solution is given by one of the following three types: a 1-shock; a composition of a 1-shock and a 2-shock; or a composition of a 1-shock and a 2-rarefaction.

(i) If $\Util$ and $U$ forms a 1-shock wave, then we have the entropy inequality:
\[
q(U;\Util) \leq s \eta(U|\Util) \leq 0,
\]
where $s<0$ is the speed of the 1-shock wave conntecting $\Util$ to $U$.

(ii) Suppose that $\Util$ and $U$ form a composite wave of a 1-shock and a 2-shock. Let $U_m$ denote the intermediate state of the corresponding Riemann solution. Then we have
\begin{align*}
	&q(U_m;\Util) - q(\Util;\Util) \leq s_1 \big(\eta(U_m|\Util) - \eta(\Util;\Util)\big),\\
	&q(U;\Util) - q(U_m;\Util) \leq s_2 \big(\eta(U|\Util) - \eta(U_m|\Util)\big),
\end{align*}
where $s_1$ and $s_2$ denote the speeds of the 1-shock and 2-shock waves, respectively. Adding the two inequalities, we find that
\[
q(U;\Util) \leq s_2 \eta(U|\Util) + (s_1 - s_2)\eta(U_m|\Util).
\]
Clearly, we have $s_1 <s_2$. We next show that $s_2 \leq 0$. To this end, consider the 2-Hugoniot locus $S_2^r(U)$ with $U$ taken as its right end state, and let $U_*$ denote of $S_2^r(U)$ with the line $u=u_b$. Since $U \in \mathcal{B}_4 \subset \mathcal{B}_0(U_b) \cup \mathcal{B}_-(U_b)$ by \eqref{def:Bout}, it follows that $s_2(U_*,U) \leq 0$, where $s_2(U_*,U)$ denotes the speed of the 2-shock connecting $U_*$ and $U$. Moreover, the explicit formula
\[
s_2(U_1,U) = u + \sqrt{\frac{\r_1(p(\r) - p(\r_1)}{\r(\r - \r_1)}},
\]
shows that $s_2(U_1,U)$ is strictly increasing with respect to $\rho_1$. Since $\rho_m < \rho_*$, we obtain 
\[
s_2 = s_2(U_m,U) < s_2(U_*,U) \leq 0.
\]
Therefore, we conclude that $q(U;\Util) \leq 0$.

(iii) Now, suppose that $\Util$ and $U$ form a composition of a 1-shock and a 2-rarefaction. Let $U_m$ denote the intermediate state. First, we have the entropy inequality:
\begin{equation}\label{est:1s}
	q(U_m;\Util) - q(\Util;\Util) \leq s_1 \big(\eta(U_m|\Util) - \eta(\Util;\Util)\big),
\end{equation}
where $s_1$ denote the 1-shock speed. Now, let $U^R\left(\frac{x}{t}\right)$ denote the 2-rarefaction wave connecting $U_m$ to $U$. Observe first that 
\begin{align*}
	\frac{d}{dt} \int_{\lambda_2(U_m)t}^{\lambda_2(U)t} \eta\left(U^R\left(\frac{x}{t}\right)|\Util \right)\,dx &= \frac{d}{dt} \int_{\lambda_2(U_m)}^{\lambda_2(U)} \eta\left(U^R(z)|\Util \right)t\,dz \\
	&= \int_{\lambda_2(U_m)}^{\lambda_2(U)} \eta\left(U^R(z)|\Util \right)\,dz >0.
\end{align*}
Applying the entropy inequality with respect to the entropy $\eta(\cdot|\Util)$ to the left-hand side of the above, we have
\begin{equation}\label{est:2r}
	q(U;\Util) - q(U_m;\Util) \leq \lambda_2(U)\eta(U|\Util) - \lambda_2(U_m)\eta(U_m|\Util).
\end{equation}
Summing the inequalities \eqref{est:1s} and \eqref{est:2r} yields
\[
q(U;\Util) \leq \lambda_2(U) \eta(U|\Util) + (s_1 - \lambda_2(U_m))\eta(U_m|\Util).
\]
Since $\lambda_2(U) \leq 0$ from $U \in \{u\leq-c\}$, and  $s_1 - \lambda_2(U_m) \leq 0$, we conclude that $q(U;\Util) \leq 0$.
\end{proof}

\begin{lem}\label{lem: inflow boundary}
(Boundary estimates for the inflow problem)

Fix $U_b \in \Omega_{sub}^{in}$ with $u_b>0$. For any constant states $\Util \in \mathcal{B}_{in}(U_b) \cap \Omega_{sub}^{in}$ and $U \in \mathcal{B}_{in}(U_b)$, we have 
\[
q(U;\Util) \leq 0.
\]
\end{lem}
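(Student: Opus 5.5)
The plan is to reduce the claim to a one-variable monotonicity argument along the boundary-layer curve $BL(U_b)$. By the description of $\mathcal{B}_{in}(U_b)$ in Section~\ref{sec:bdset}, both states lie on the hyperbola $\rho u = m := \rho_b u_b > 0$. That is, $\rho\, u = \rhotil\,\util = m$, with $\rhotil, \rho > 0$ (the vacuum is excluded because $m>0$) and $u, \util>0$. Moreover $U \in \Omega_{sub}^+ \cup \Gamma_{trans}^+$, which means $\lambda_1(U) \le 0$, i.e. $u^2 \le \gamma \rho^{\gamma-1}$.

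\textbf{Step 1: parametrize by $\rho$.} I parametrize the curve by density: $U(\rho) = (\rho, m)$, so that $u(\rho) = m/\rho$. I then set
\[
g(\rho) := q(U(\rho);\Util).
\]
Clearly $g(\rhotil) = q(\Util;\Util) = 0$. The admissible range is $\rho \ge \rho_T$, where $\rho_T$ is the density of the intersection of $BL(U_b)$ with $\Gamma_{trans}^+$, determined by $m/\rho_T = \sqrt{\gamma}\,\rho_T^{(\gamma-1)/2}$. On this half-line the subsonic/transonic condition reads $u(\rho)^2 \le \gamma\rho^{\gamma-1}$. Note also that $\rhotil > \rho_T$, since $\Util$ is strictly subsonic.

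\textbf{Step 2: compute $g'$.} I use the standard identity $\nabla q = \nabla\eta\, DF$. Together with the definition of $q(\cdot;\Util)$, it gives
\[
\frac{d}{d\rho} q(U;\Util) = \big(\nabla\eta(U) - \nabla\eta(\Util)\big)\cdot \frac{d}{d\rho}F(U(\rho)).
\]
Along $BL(U_b)$ the first flux component $\rho u = m$ is constant. Hence only the second component contributes, and the second entry of $\nabla\eta$ is $u$. Therefore
\[
g'(\rho) = (u - \util)\,\frac{d}{d\rho}\big(m u + p(\rho)\big) = (u-\util)\Big(\gamma\rho^{\gamma-1} - \frac{m^2}{\rho^2}\Big) = (u-\util)\big(\gamma\rho^{\gamma-1} - u^2\big).
\]

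\textbf{Step 3: sign analysis.} On the admissible range $\rho\ge\rho_T$, the second factor is nonnegative, since $\lambda_1(U)\le 0$ there. The first factor satisfies $u - \util = m(1/\rho - 1/\rhotil)$, so it has the sign of $\rhotil - \rho$. Hence $g$ is nondecreasing on $[\rho_T, \rhotil]$ and nonincreasing on $[\rhotil,\infty)$. Combined with $g(\rhotil)=0$, this gives $g \le 0$ on the whole admissible range, which is the claim $q(U;\Util)\le 0$.

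\textbf{Expected obstacle.} The only delicate point is the derivative identity in Step 2, in particular checking that the $\nabla\eta(\Util)$-term is handled correctly: the term $-\nabla\eta(\Util)\cdot F(U)$ differentiates to $-\nabla\eta(\Util)\cdot\frac{d}{d\rho}F$. One could instead verify the formula for $g'$ by brute-force differentiation of the explicit expression
\[
g = \tfrac12 m(u-\util)^2 + \frac{u\,p(\rho|\rhotil)}{\gamma-1} + (p(\rho)-p(\rhotil))(u-\util),
\]
which I would do as a consistency check. In contrast with the outflow Lemma~\ref{lem: ouflow boundary}, no case splitting is needed here, because $\mathcal{B}_{in}(U_b)$ is a single connected arc on which the sonic factor has a fixed sign.
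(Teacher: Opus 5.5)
Your proposal is correct and is essentially the paper's proof: the paper also restricts to the curve $\rho u = m$, parametrizes by $a=\rho/\rhotil$ (equivalent to your $\rho$), and shows that $\Util$ is the global maximum of $q(\cdot;\Util)$ on the admissible arc via the factorization $f'(a)=\frac{m}{\rhotil^2 a^3}(a-1)\big(m^2-\gamma a^{\gamma+1}\rhotil^{\gamma+1}\big)$, which is exactly your $g'$ after rescaling. The only difference is that you obtain the derivative from the identity $\nabla q=\nabla\eta\,DF$ instead of expanding $q$ explicitly in $a$ and differentiating, which makes it more transparent why the sonic factor appears.
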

\begin{proof}
Let $\Util = (\rhotil, \rhotil \util)$ and $U = (\rho, \rho u)$. Define $a>0$ by $\rho = a \rhotil$, and set $m := \rhotil \util = \rho u$. Then, we have 
\[
u = \frac{m}{a\rhotil}.
\] 
Moreover, observe that $U \in \{u=c\}$ if and only if $m^2 = \gamma a^{\gamma + 1}\rhotil^{\gamma + 1}$. Indeed,
\begin{equation}\label{est:atil}
	u = \sqrt{\gamma \rho^{\gamma - 1}} \iff \frac{m^2}{a^2 \rhotil^2} = \gamma a^{\gamma - 1}\rhotil^{\gamma -1} \iff m^2 = \gamma a^{\gamma + 1}\rhotil^{\gamma + 1}.
\end{equation}
Let $\tilde{a}$ denote the (unique) value sastisfying $m^2 = \gamma \tilde{a}^{\gamma + 1}\rhotil^{\gamma + 1}$. Then, we find that $0<\tilde{a}<1$, and $U \in \mathcal{B}_{in}(U_b)$ if and only if $a \in [\tilde{a},\infty)$.

Now, substituting the above notations into the relative entropy flux  $q(U;\Util)$, we obtain 
\begin{align*}
	&q(U;\Util) = \frac{\rho u }{2}|u-\util|^2 + \frac{u}{\gamma -1}p(\rho|\rhotil) + (p(\rho) - p(\rhotil))(u - \util)\\
	&\quad =\frac{m}{2}\left|\frac{m}{a \rhotil} - \frac{m}{\rhotil}\right|^2 + \frac{m}{(\gamma -1) a \rhotil}\big((a \rhotil)^\gamma - \rhotil^\gamma - \gamma \rhotil^{\gamma - 1}(a \rhotil - \rhotil)\big)
	+ \big((a \rhotil)^\gamma - \rhotil^\gamma)\left(\frac{m}{a \rhotil} - \frac{m}{\rhotil}\right)\\
	&\quad = \frac{m^3}{2\rhotil^2}\left(1-\frac{1}{a}\right)^2 + m \rhotil^{\gamma - 1}\left[-a^\gamma + \frac{\gamma}{\gamma -1}a^{\gamma - 1} - \frac{1}{\gamma -1}\right]=:f(a).
\end{align*}
Thus, it suffices to show that $f(a)\leq 0$ on $a \in [\tilde{a},\infty)$. To this end, observe that 
\begin{align*}
	f'(a) = \frac{m}{\rhotil^2 a^3}(a-1)\big(m^2 - \gamma a^{\gamma +1}\rhotil^{\gamma +1}\big).
\end{align*}
From \eqref{est:atil}, we deduce that $f'(a)$ has zeros at $a= \tilde{a}$ and $a=1$. Moreover, note that $f'(a)>0$ on $(\tilde{a}, 1)$, and $f'(a) < 0$ on $(1,\infty)$. Thus, $f(1)$ is the global maximum of $f$ on $[\tilde{a},\infty)$. Therefore, we conclude that 
\[
f(a) \leq f(1) = 0 \, \, \text{for all } a \in [\tilde{a},\infty).
\]
\end{proof}

\section{Modified front tracking algorithm}\label{sec:MFTA}
\setcounter{equation}{0}
In this section, we construct approximate solutions of the small BV solution. 
The construction follows the modified front tracking algorithm introduced by Amadori and Colombo in \cite{AC97}, which extends the original Bressan-Colombo scheme \cite{BC95} to the initial-boundary value problem. 
Besides the approximate solution constructed according to their procedure (referred to as the unshifted one), we also construct a shifted approximate solution whose shock travels with the speed $h$ introduced in Proposition \ref{Prop:S}.
Finally, we establish a contraction in a weighted $L^{1}$ distance between two unshifted approximate solutions, and an estimate in the same weighted $L^{1}$ distance between a shifted and an unshifted approximate solution.

The structure of this section closely parallels Section 4 of \cite{CKF25}, with fewer details. 
That section established the well-definedness of the shifted approximate solution 
and the $L^1$ estimate between a shifted and an unshifted approximate solution for the Cauchy problem. 
Here, we adapt their argument to the boundary setting. 
In particular, we verify that all key estimates in \cite{CKF25}, including the well-definedness of the shifted approximate solution and the $L^1$ estimate between shifted and unshifted approximate solutions, remain valid for the boundary value problem.

\subsection{The front tracking $\nu$-approximate solution}\label{sec:FTA}
In this section, we briefly review the front tracking algorithm as in Bressan and Colombo \cite{BC95}, and Amadori and Colombo \cite{AC97}. In this section, we use the Riemann coordinates $V = (V_1, V_2)$, where $V_1$ and $V_2$ are Riemann invariants defined in \eqref{def:RI}. 

For simplicity, with a slight abuse of notation, we denote by $R_i(V,\sigma)$ and $S_i(V,\sigma)$ the $i$-th rarefaction and shock curves in Riemann coordinates, from the state $V$. We note that, in Section~\ref{sec:bdset}, the analogous notation 
$S_i(U)$ was used for the Hugoniot loci written in terms of the conserved quantities $U$.

In the Riemann coordinates, the $i$-th rarefaction curves 
are given by
\[
R_1 (V,\sigma)  = (V_1 + \sigma, V_2), \quad R_2(V, \sigma)= (V_1, V_2 + \sigma).
\]
Since the $i$-th rarefaction curve and the $i$-th shock curve are tangent of order two at $V$, the $i$-th shock curve $S_i(V,\sigma)$ can be expressed as 
\[
S_1(V,\sigma) = (V_1 + \sigma, V_2 + \hat{S}_2(V,\sigma)\cdot \sigma^3), \quad S_2(V,\sigma) = (V_1 +\hat{S}_1(V,\sigma)\cdot \sigma^3, V_2 + \sigma ),
\]
for some smooth functions $\hat{S}_1$ and $\hat{S}_2$. Both $R_i$ and $S_i$ are parametrized by arc-length. 

In order to construct our front tracking algorithm, we first choose a smooth function $\phi:\mathbb{R} \to \mathbb{R}$ satisfying
\begin{equation}
\begin{aligned}\label{def of phi ft}
    \phi(s) = \begin{cases}
	1, \quad \text{if } s \leq -2,\\
	0, \quad \text{if } s \geq -1,
\end{cases}
\end{aligned}
\end{equation}
with $\phi'(s) \in [-2,0]$ for any $s \in [-2,-1]$.

For fixed $\nu>0$, we define \textit{the interpolated $i$-th wave curve} as follows:
\begin{align}\label{interpolated curve}
    \Phi_i^\nu(V,\sigma) := \phi\left(\frac{\sigma}{\sqrt{\nu}}\right)S_i(V,\sigma) + \left(1-\phi\left(\frac{\sigma}{\sqrt{\nu}}\right)\right)R_i(V,\sigma).
\end{align}
Note that $\Phi_i^\nu(V,\sigma) = S_i(V,\sigma)$ if $\sigma \leq -2\sqrt{\nu}$, and $\Phi_i^\nu(V,\sigma) = R_i(V,\sigma)$ if $\sigma \geq -\sqrt{\nu}$. In this section, if $\sigma >0$, we call $\sigma$ a jump of a rarefaction wave, and if $\sigma <0 $, a jump of a shock wave.
\begin{lem}
Let $V_b = V(U_b) \in \Omega_{sub}$ be the boundary data in the Riemann coordinates. There exists $\e_0 >0$ such that the following statements hold.
\begin{enumerate}
	\item For given $\nu, V^l, V^r$ with $V^l, V^r \in B_{\e_0}(V_b)$, 
    there exists a unique intermediate state $V^m$ and values $\sigma_1, \sigma_2$ such that 
	\[
	V^m = \Phi_1^\nu(V^l,\sigma_1), \quad V^r = \Phi_2^\nu(V^m,\sigma_2).
	\]
	\item For given $\nu, V^r$ and the inflow/outflow boundary condition (\ref{small BV BC}) with $V^r \in B_{\e_0}(V_b)$, there exists a unique interpolated Riemann solution to the the interpolated boundary Riemann problem.
\end{enumerate}
\end{lem}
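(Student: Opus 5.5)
Both parts will follow from the implicit function theorem applied at the base point, exploiting that the interpolated curves $\Phi_i^\nu$ are $C^1$ in $(V,\sigma)$ with derivatives bounded uniformly in $\nu$. The key observation is that $\Phi_i^\nu$ is built from $S_i$ and $R_i$, which agree to second order at $\sigma=0$. Hence the difference $S_i(V,\sigma)-R_i(V,\sigma)$ is $O(\sigma^3)$, as is explicit from the representation $S_1(V,\sigma)=(V_1+\sigma,V_2+\hat S_2\sigma^3)$. The only nonconstant factor in the interpolation is $\phi(\sigma/\sqrt{\nu})$, whose $\sigma$-derivative is of order $\nu^{-1/2}$. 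It is supported in $|\sigma|\in[\sqrt\nu,2\sqrt\nu]$, where $|S_i-R_i|\lesssim \nu^{3/2}$. Therefore
\[
\partial_\sigma \Phi_i^\nu(V,\sigma)=\partial_\sigma R_i(V,\sigma)+O(\sigma^2)+O(\nu^{-1/2}\nu^{3/2})=\partial_\sigma R_i(V,\sigma)+O(|\sigma|^2+\nu),
\]
and similarly for $\partial_V\Phi_i^\nu$. In Riemann coordinates $\partial_\sigma R_1=e_1$ and $\partial_\sigma R_2=e_2$. This uniform closeness to the exact rarefaction curves is the step that makes everything else routine.

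\textbf{Part (1).} I would define $G^\nu(V^l,\sigma_1,\sigma_2):=\Phi_2^\nu(\Phi_1^\nu(V^l,\sigma_1),\sigma_2)$ and solve $G^\nu(V^l,\sigma_1,\sigma_2)=V^r$ for $(\sigma_1,\sigma_2)$. At $\sigma_1=\sigma_2=0$, the Jacobian $\partial_{(\sigma_1,\sigma_2)}G^\nu$ is the identity matrix $(e_1,e_2)$, with perturbation $O(|\sigma|^2+\nu)$. I would apply a quantitative inverse function theorem (contraction mapping) in a ball $|\sigma|\le C\e_0$, where the Jacobian stays within $1/2$ of the identity once $\e_0$ and $\nu$ are small. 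This gives existence and uniqueness of $(\sigma_1,\sigma_2)$, and hence of $V^m=\Phi_1^\nu(V^l,\sigma_1)$, for all $V^l,V^r\in B_{\e_0}(V_b)$. The radius depends only on the $C^1$ bounds, so it is uniform in $\nu$. For $\nu$ not small, one simply shrinks $\e_0$, since only small $\nu$ matters.

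\textbf{Part (2).} Since $V_b$ is subsonic, the boundary Riemann problem with right state $V^r$ admits a single wave, a $2$-wave of positive speed. So I seek a state $V^0=\Phi_2^\nu(\cdot)$ with $V^r=\Phi_2^\nu(V^0,\sigma_2)$ and $V^0$ on the boundary manifold \eqref{small BV BC}. That manifold is $\{u=u_b\}$ for the outflow problem and $\{\rho u=\rho_b u_b\}$ for the inflow problem. I would parametrize it near $V_b$ as a smooth curve $\ell(\tau)$ with $\ell(0)=V_b$, and solve
\[
H^\nu(\tau,\sigma_2):=\Phi_2^\nu(\ell(\tau),\sigma_2)-V^r=0 .
\]
The Jacobian at the origin is $(\ell'(0),e_2)+O(\nu)$. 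The main point to check is therefore transversality: $\ell'(0)$ must not be parallel to the $2$-direction $e_2$, i.e.\ the boundary curve must not be tangent to the $2$-rarefaction curve. In Riemann invariants, $u=(V_1+V_2)/2$, so $\{u=u_b\}$ has tangent $(1,-1)$, which is transversal. For the inflow case, differentiate $m=\rho u$ along $R_2$: one gets $dm\propto \rho(\partial_{V_2}u)+u\,\partial_{V_2}\rho$, which is proportional to $\rho\lambda_2>0$ in the subsonic region, so $m$ is nonconstant along $R_2$ and the curve is again transversal. The fact that $\lambda_2>0$ (rather than, say, $\lambda_1$) is what enters here, and it is guaranteed by $V_b\in\Omega_{sub}$. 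The same inverse function argument as in Part (1) then gives a unique $(\tau,\sigma_2)$, hence a unique interpolated boundary Riemann solution for all $V^r\in B_{\e_0}(V_b)$ after possibly shrinking $\e_0$.
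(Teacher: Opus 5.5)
Your argument is correct. The paper gives no proof of its own here. It cites Lemma~1 of \cite{BC95} for part (1) and Section~3 of \cite{AC97} for part (2), and your implicit-function-theorem argument is a self-contained reconstruction of what those references do.

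What your version adds is that it makes explicit the two facts the citation silently relies on:
\begin{enumerate}
\item the $C^1$ closeness of $\Phi_i^\nu$ to $R_i$, uniformly in $\nu$;
\item for part (2), the transversality condition $\nabla b\cdot r_2\neq 0$ (Amadori's (NC) hypothesis) for the specific boundary functions $b=u$ and $b=\rho u$.
\end{enumerate}
The paper only asserts the second point in Section~2, when it says these boundary conditions fall within the (NC) case of \cite{A97}. Your computation is right. In fact $\nabla_V(\rho u)=\frac{\rho}{2c}(-\lambda_1,\lambda_2)$ with $c=\sqrt{\gamma\rho^{\gamma-1}}$, so in the inflow case transversality needs only $\lambda_2\neq 0$. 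Citing \cite{BC95,AC97}, for its part, also brings along the further properties of the interpolated solver that the paper uses later, such as the interaction estimates and the $d_\nu$-contractive semigroup.

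One small point deserves a line, since the lemma requires $\e_0$ to be independent of $\nu$: your remark about ``$\nu$ not small.'' The factor $\phi'(\sigma/\sqrt{\nu})$ is supported where $\sqrt{\nu}\le|\sigma|\le 2\sqrt{\nu}$. On that set, $\nu^{-1/2}\le|\sigma|^{-1}$, so the term $\nu^{-1/2}\phi'(\sigma/\sqrt{\nu})\,(S_i-R_i)$ is $O(\sigma^2)$, not merely $O(\nu)$. Hence $\partial_\sigma\Phi_i^\nu=\partial_\sigma R_i+O(\sigma^2)$ uniformly in all $\nu>0$. Moreover, if $\sqrt{\nu}$ exceeds the radius of your ball in $\sigma$, then $\Phi_i^\nu=R_i$ on that ball. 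So a single choice of $\e_0$ works for every $\nu$, and no $\nu$-dependent shrinking is needed.

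Also note that the uniqueness you obtain is uniqueness among waves of size $O(\e_0)$. That is the sense intended in \cite{BC95}.
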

\begin{proof}
The first statement is Lemma~1 of \cite{BC95}, 
and the proof of (2) can be found in Section 3 of \cite{AC97}.
\end{proof}
Now, we introduce the so-called $\nu$-approximate rarefaction and interpolated shock wave for the first family. Suppose that $\sigma >0$, and that $V^l=(V_1^l, V_2^l)$ and $V^r = (V_1^r, V_2^r)$ are connected by a 1-rarefaction, that is, $V^r = R_1(V_l,\sigma)$. Let $h, k \in \mathbb{Z}$ such that $h \nu \leq V_1^l < (h+1)\nu$ and $k \nu \leq V_1^r < (k+1)\nu$. Then, we define a $\nu$-approximate rarefaction fan by 
\[
V_\nu(x,t) := \begin{cases}
	V^l, \, \, &\text{if } x < \lambda_1(\hat{w}_1^h)t,\\
	w_1^j, \, \, &\text{if } \lambda_1(\hat{w}_1^{j-1})t < x < \lambda_1(\hat{w}_1^j)t,\\
	V^r, \, \, &\text{if } \lambda_1(\hat{w}_1^k)t < x,
\end{cases}
\quad j = h+1, \cdots, k,
\]
where $w_1^j := (j\nu,V_2^l)$ and $\hat{w}_1^j := ((j+1/2)\nu,V_2^l)$ for $j=h,\cdots, k$.

Assume that $\sigma < 0$, and that the states $V^l=(V_1^l, V_2^l)$ and $V^r = (V_1^r, V_2^r)$ are connected by an interpolated 1-shock curve, that is, $V^r = \Phi_1^\nu(V^l,\sigma)$. The interpolated 1-shock wave is then defined by
\begin{equation}\label{def:itpS}
	V_\nu(x,t) := \begin{cases}
	V^l \, \, &\text{if } x < \lambda_1^\phi(V^l,\sigma)t,\\
	V^r \, \, &\text{if } x > \lambda_1^\phi(V^l,\sigma)t,\\
\end{cases}
\end{equation}
where the interpolated shock speed is defined by 
\begin{align}\label{speed of shock unshifted}
    \lambda_1^\phi(V^l,\sigma) :=  \phi\left(\frac{\sigma}{\sqrt{\nu}}\right)\lambda_1^s(V^l,\sigma) + \left(1-\phi\left(\frac{\sigma}{\sqrt{\nu}}\right)\right)\lambda_1^r(V^l,\sigma).
\end{align}
Here, $\lambda_1^s(V^l,\sigma)$ and $\lambda_1^r(V^l,\sigma)$ are defined as 
\begin{equation*}
    \begin{aligned}
        &\lambda_1^s(V^l,\sigma) := s_1(V^l, S_1(V^l,\sigma)),\\
	    &\lambda_1^r(V^l,\sigma) := \sum_{j} \frac{\text{meas}([j\nu, (j+1)\nu] \cap [V_1^r, V_1^l])}{|\sigma|}\lambda_1(\hat{w}_1^j),
    \end{aligned}
\end{equation*}
where $s_1(V^l, S_1(V^l,\sigma))$ denotes the true 1-shock speed connecting $V^l$ to $S_1(V^l,\sigma)$.

Observe that if $\sigma \leq -2\sqrt{\nu}$, then \eqref{def:itpS} coincides with the true shock, as $\lambda_1^\phi(V^l,\sigma) = \lambda_1^s(V^l,\sigma)$.

Similarly, one can construct $\nu$-approximate waves for the second family. The detailed construction can be found in Section 2 of \cite{BC95}. We omit it here.

Let $V(x,0)$ be a piecewise constant initial data. We now construct the $\nu$-approximate solution $V_\nu(x,t)$ as follows.
\begin{enumerate}
	\item At $t=0$, we solve the interpolated Riemann problem at the interior discontinuity $(x>0)$, while at the boundary $(x=0)$, we solve the interpolated boundary Riemann problem. We keep the piecewise constant approximation constructed above until the first interaction or boundary hitting occurs, and denote this time by $t_1$.
	\item If wavefronts interact at $t=t_1$, we solve the corresponding interpolated Riemann problem again. If wavefronts hit the boundary at $t=t_1$, we solve the interpolated boundary Riemann problem.
	\item Repeating this process, we can define $V_\nu(x,t)$ for any finite time interval $[0,T]$.
\end{enumerate}
Proposition 1 of \cite{AC97} ensures the well-definedness of the approximate solution for all time, 
as well as the finiteness of the number of fronts and interactions on $[0,T]$ for any $T>0$.

\subsection{The shifted $\nu$-approximate solution}\label{sec:SFTA}
In order to establish the stability estimate for shock waves, we consider the shifted front tracking algorithm.

To construct the shifted $\nu$-approximate solution, if $\sigma \geq 0$, we use the same (approximate) rarefaction waves and wave speeds as in Section \ref{sec:FTA}.

Now, we consider the case when $\sigma <0$. At the interaction point $(x_1,t_1)$, if the interpolated Riemann problem generates a $i$-th shock $(V^l, \Phi_i^\nu(V^l,\sigma))$, the shifted $\nu$-approximate solution $\psi_\nu(x,t)$ is defined by
\begin{equation}\label{def:sFTS}
    \psi_\nu(x,t) := \begin{cases}
	V^l, \, \, &\text{if } h_L(t) < x < h(t),\\
	\Phi_i^\nu(V^l,\sigma), \, \, &\text{if } h(t) < x < h_R(t),
\end{cases}
\end{equation}
where $h(t)$ is the shift function given by Proposition~\ref{Prop:S} associated with the true shock $(V^l, S_i((V^l,\sigma)))$, and $h_L(t)$ and $h_R(t)$ denote the left and right fronts closest to $h(t)$, respectively.
\begin{rem}
    The shifted $\nu$-approximate solution is used to apply Proposition~\ref{Prop:S} at each shock discontinuity. Note that Proposition~\ref{Prop:S} provides the dissipation estimate only for true shocks. However, if $0>\sigma \geq -2 \sqrt{\nu}$, the interpolated shock waves in \eqref{def:sFTS} generally do not coincide with true shocks. Nevertheless, we retain this definition, as it guarantees that $V_*$ and $\psi$, defined in Lemma~\ref{lem: L1 difference}, form an elementary path essential for the proof of \eqref{l1 distance FT}. The error resulting from retaining this definition is quantified precisely in (6.11) and (6.12) of \cite{CKF25}.

    Although Proposition~\ref{Prop:S} requires true shocks, it can still be applied when $0>\sigma \geq -2 \sqrt{\nu}$, since the discrepancy between the true and interpolated shock waves can be quantified and shown to vanish as $\nu \to 0$ (see \eqref{main thm proof eq 3}).
\end{rem}

Since $\dot{h}$ satisfies (\ref{bound of h 1}) or (\ref{bound of h 2}), there exists a constant $c>0$ such that
\begin{align}\label{uniform bound on speed of the wave}
    \lambda^{(1)} \le -c < 0 < c \le \lambda^{(2)} \, ,
\end{align}
where $\lambda^{(i)}$ denotes any possible speed of an $i$-wave in the shifted and unshifted $\nu$-approximate solution. 
This property is crucial for ensuring the global well-definedness of the $\nu$-approximate solution.
Following the proof of Proposition~1 in Amadori and Colombo \cite{AC97}, which in turn relies on the finiteness argument of Section 4 in Bressan and Colombo~\cite{BC95}, 
we conclude that the shifted $\nu$-approximate solution is well-defined for all time and contains only finitely many wave fronts and interaction points on $[0,T]$ for any $T>0$. The proof proceeds in the same way as that for the unshifted $\nu$-approximate solution.

\subsection{Control on the BV norm}
To prove that the unshifted and shifted $\nu$-approximate solutions are globally well-defined, a standard approach 
is to show that the sum of the total variation $L$ and the interaction potential $Q$ 
(possibly multiplied by a sufficiently large constant) is nonincreasing in time. 
It is worth mentioning that these quantities need to be slightly modified because of the presence 
of a boundary. In particular, (6.2) of Amadori and Colombo~\cite{AC97} provides 
the following definitions of $L$ and $Q$ adapted to the boundary setting:
\begin{align}\label{def of V, Q}
    L = \sum_{i,\alpha} K_i |\sigma_{i,\alpha}| \, , \qquad 
    Q = \sum_{(\sigma_{i,\alpha},\sigma_{j,\beta}) \in \mathcal{A}} 
        K_i K_j |\sigma_{i,\alpha}\sigma_{j,\beta}| \, ,
\end{align}
where $K_2=1$, $K_1\geq 1$ is a sufficiently large constant, and $\mathcal{A}$ denotes the usual set of pairs of approaching waves, i.e., those pairs located at $x_\alpha<x_\beta$ such that either $i=2,\, j=1$ or $i=j$ and at least one of them is a shock. 

When a 1-wave $\sigma_1$ hits the boundary and generates a reflected $2$-wave $\sigma_2$ at time $t$. It is easy to see that $\sigma_1$ and $\sigma_2$ have the same sign, i.e., either both shocks or both rarefactions. Moreover, there exists a constant 
$\lct>1$ valid for all $U\in B_{\e_0}(U_b)$ such that 
\begin{align}\label{def of lct}
    |\sigma_2| \leq \lct |\sigma_1| \, .
\end{align}
In this paper, we choose $K_1$ 
\begin{align}\label{relation K_1, C_1}
    K_1 = 4 \lct \, .
\end{align}
Lemma 3 in Amadori and Colomobo \cite{AC97} shows that for sufficiently large $\kappa>0$, if the total variation at time $0$ is sufficiently small, then for any $t\geq 0$, 
\begin{align}\label{nonincreasing V+Q}
    \Delta (L+\kappa Q)(t) \leq 0 \, ,
\end{align}
and in particular, for any $t_*\geq 0$ at which an interaction occurs,
\begin{align}\label{nonincreasing V+Q interaction}
    \Delta (L+\kappa Q)(t_*) \leq \Delta Q(t_*) \, .
\end{align}
For any $t_*\geq 0$ at which the waves in a set $\mathcal{S}_1$ of $1$-waves hit the boundary and a set $\mathcal{S}_2$ of $2$-waves is generated,
\begin{align*}
    \Delta (L+\kappa Q)(t) 
    \leq -K_1\sum_{\mathcal{S}_1}|\sigma_1| + \sum_{\mathcal{S}_2}|\sigma_2| + \kappa L(t^-) \sum_{\mathcal{S}_2} |\sigma_2| \leq -\frac{K_1}{2} \sum_{\mathcal{S}_1}|\sigma_1| \, .
\end{align*}

\subsection{$L^1$ estimates between $\nu$-approximate solutions}
To measure the distance between $\nu$-approximate solutions, 
we employ the weighted distance functional $d_\nu$ introduced by 
Bressan and Colombo \cite{BC95} and later adapted to the boundary setting by Amadori and Colombo \cite{AC97}. 
The functional $d_\nu$ quantifies the total weighted strength of the pseudopolygonal paths connecting two piecewise constant approximate solutions corresponding to the same far-field and boundary conditions. Moreover, $d_\nu$ defines a metric equivalent to the $L^1$ distance.

We recall the essential property that $d_\nu$ is nonincreasing when two unshifted $\nu$-approximate solutions evolve, and we refer to \cite{BC95} and \cite{AC97} for the proof of the contraction property. For completeness, we will briefly present the main results in \cite{AC97} in what follows.

For a fixed constant state $d$, we define a domain of small BV space as follows:
\[
\mathcal{D} := \{V - d \in L^1(\Rp) \, :\, V \, \text{is a piecewise constant, } L(V) + \kappa Q(V) < \delta\},
\]
where $\kappa$ is defined in \eqref{nonincreasing V+Q}. Observe that, for sufficiently small $\delta$, (un)shifted $\nu$-approximate solutions belong to $\mathcal{D}$ for a.e. $t\geq 0$.

In the sequel, we provide definitions of elementary path and pseudopolygonals.
\begin{defi}
An elementary path $\gamma:
 (a,b) \rightarrow \mathcal{D}$ is a map of the form
\begin{equation}\label{def:EP}
    \gamma(\theta) = \omega^{0}\cdot \chi_{(0,x_1^\th]} + \sum_{\alpha=1}^{n-1}\omega^\alpha \cdot \chi_{(x_\alpha^{\th}, x_{\alpha+1}^\th]} + d\cdot \chi_{(x_n^\th, +\infty)}, \quad x_\alpha^\th = x_\alpha + \xi_\alpha \theta,
\end{equation}
with $x_\alpha^\th < x_{\alpha + 1}^\th$ for all $\th \in (a,b)$ and $\alpha=1,2,\cdots, n-1$.
\end{defi}
\begin{defi}\label{def:PP}
A continuous map $\gamma:[a,b] \to L^1_{loc}$ is called pseudopolygonal if there exists a countable family of disjoint open intervals $J_h \in [a,b]$ such that $[a,b]\setminus \cup_{h \in \mathbb{N}} J_h$ is countable, and the restriction of $\gamma$ to each $J_h$ is an elementary path.
\end{defi}
Loosly speaking, the pseudopolygonal is the countable concatenations of the elementary paths. Moreover, we remark that any two piecewise constant functions can be joined by pseudopolygonals.

As shown in \cite{AC97}, if we denote $\mathcal{S}_t$ the semigroup generated by the front tracking scheme in Section~\ref{sec:FTA}, and $\gamma_0(\theta)$ is a pseudopolygonal, then $$\gamma_t(\theta):=\mathcal{S}_t\circ\gamma_0(\theta)$$ is also a pseudopolygonal. More precisely, there exists a countable family of intervals $\{J_h\}_{h \in \mathbb{N}}$ such that $[a,b]\setminus \cup_{h \in \mathbb{N}} J_h$ is countable, and the wave configuration of $\nu$-approximate solutions with data $\gamma_0(\theta)$ remains constant as $\theta$ varies within each interval $J_h$.

Let $\gamma$ be an elementary path in \eqref{def:EP}. Define
\begin{equation}\label{def:EPnorm}
    \|\gamma\| := (b-a)\Upsilon_\x,
\end{equation}
where 
\begin{align*}
    &\Upsilon_\x := \left(\sum_{\alpha=1}^2 \sum_{i=1}^2 H_i(2+\sgn \sigma_{i,\alpha})|\sigma_{i,\alpha}\xi_{\alpha}|+H_3 \mathcal{Q}_\xi\right)e^{H_4 (L+Q)},\\
    &\mathcal{Q}_\xi = \sum_{(\sigma_{i,\alpha},\sigma_{j,\beta})\in\mathcal{A}}H_i H_j |\sigma_{i,\alpha}\sigma_{j,\beta}|\big((2+\sgn \sigma_{i,\alpha})|\xi_{i,\alpha}|+(2+\sgn \sigma_{j,\beta})|\xi_{j,\beta}|\big),
\end{align*}
where $H_1, H_2, H_3$, and $H_4$ are suitable positive constants. Here, $\sigma_{1,\alpha}$ and $\sigma_{2,\alpha}$ denote the sizes of the 1- and 2-waves in the interpolated Riemann solution corresponding to the data $\omega^{\alpha}$ and $\omega^{\alpha+1}$, respectively, and $\xi_{i,\alpha}$ denotes the shift speed of the wave $\sigma_{i,\alpha}$, as defined in \eqref{def:EP}.

Now, let $\gamma$ be a pseudopolygonal with the family of intervals $\{J_h\}$ as in Definition~\ref{def:PP}, and denote by $\gamma_h$ the restriction of $\gamma$ on $J_h$. Define
\[
\|\gamma\| := \sum_{h \in \mathbb{N}}\|\gamma_h\|.
\]
We then define the weighted distance as 
\begin{equation}\label{def:dnu}
    d_\nu(V,W):= \inf\{\|\gamma\|\,:\, \gamma:[a,b] \to \mathcal{D} \, \, \text{is a pseudopolygonal joining } V \, \, \text{with } W \}.
\end{equation}
\begin{prop}\label{prop L^1 semigroup}\cite{AC97}
There exist positive constants $K_1,K_2, H_1, H_2, H_3$ and $H_4$ such that the following holds.

Let $V$ and $W$ be $\nu$-approximate solutions constructed in 
Section \ref{sec:FTA} or Section \ref{sec:SFTA}. 
The weighted distance $d_\nu$ is equivalent to the $L^1$ distance; i.e.,
\[
K_1 \|V-W\|_{L^1(\Rp)} \leq d_\nu(V,W) \leq K_2 \|V-W\|_{L^1(\Rp)}.
\]
In particular, if both $V$ and $W$ are unshifted $\nu$-approximate solutions, 
then $d_\nu$ is contractive; that is,
\begin{align*}
    d_\nu(V(\cdot,t), W(\cdot,t)) 
    \leq { \, d_\nu(V(\cdot,0), W(\cdot,0))} \, , \; \textup{ for any } t \geq 0 \, .
\end{align*}
\end{prop}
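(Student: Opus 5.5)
The statement has two parts: the equivalence of $d_\nu$ with the $L^1$ distance, and the contractivity of $d_\nu$ along pairs of unshifted $\nu$-approximate solutions. The plan is to follow Bressan--Colombo \cite{BC95} as adapted to boundaries by Amadori--Colombo \cite{AC97}. The idea is to reduce everything to elementary paths and to estimate how the weighted norm $\|\gamma\|$ of \eqref{def:EPnorm} evolves.

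\textbf{Equivalence with $L^1$.} For an elementary path $\gamma$ on $(a,b)$, one has
\[
\|\gamma(b)-\gamma(a)\|_{L^1}\le (b-a)\sum_\alpha|\omega^\alpha-\omega^{\alpha-1}||\xi_\alpha|.
\]
Each jump $|\omega^\alpha-\omega^{\alpha-1}|$ is comparable to $\sum_i|\sigma_{i,\alpha}|$ by the Lipschitz dependence of the interpolated Riemann solver, which is item (1) of the lemma in Section~\ref{sec:FTA}. Since $L+Q<\delta$ on $\mathcal{D}$, the factors $(2+\sgn\sigma)$ and $e^{H_4(L+Q)}$ lie between $1$ and $3e^{H_4\delta}$. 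Hence $\|\gamma\|\ge c\,\|\gamma(b)-\gamma(a)\|_{L^1}$, and summing over the pieces of a pseudopolygonal and taking the infimum gives the lower bound. For the upper bound, I would join $V$ and $W$ by an explicit pseudopolygonal, for instance one moving the discontinuities of $V$ to those of $W$ one at a time. Its weighted length is bounded by $C\|V-W\|_{L^1}$ up to a vanishing error, using the smallness of the total variation once more.

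\textbf{Contractivity.} Let $\gamma_0$ be any pseudopolygonal joining $V(0)$ to $W(0)$. By the result of \cite{AC97} recalled above, $\gamma_t=\mathcal{S}_t\circ\gamma_0$ is again a pseudopolygonal, and on each $J_h$ the wave configuration is constant. It therefore suffices to show that $t\mapsto\Upsilon_\xi(t)$ is nonincreasing for a single elementary path. Between interaction times the wave sizes are constant and the shift rates $\xi$ are constant, so $\Upsilon_\xi$ is constant. At an interior interaction time, the standard Bressan--Colombo estimates apply:
\begin{itemize}
\item the new shift rates are controlled by the incoming ones plus a quadratic error $\mathcal{O}(|\sigma\sigma'|)$;
\item the sign weights $(2+\sgn\sigma)$ handle rarefaction splitting;
\item the drop of $Q$ in \eqref{nonincreasing V+Q interaction}, amplified by $e^{H_4(L+Q)}$ and $H_3\mathcal{Q}_\xi$, absorbs the error.
\end{itemize}
The boundary events form the genuinely new case. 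When a $1$-wave $\sigma_1$ with shift rate $\xi$ hits $x=0$, the reflected wave satisfies $|\sigma_2|\le C_b|\sigma_1|$ by \eqref{def of lct}, and it has the same sign. Because the boundary condition \eqref{small BV BC} is one-dimensional and uniformly noncharacteristic, the reflected shift rate obeys $|\xi_2|\le C|\xi|$, with $C$ depending on the ratio of speeds in \eqref{uniform bound on speed of the wave}. The choice $K_1=4C_b$ in \eqref{relation K_1, C_1}, together with the corresponding choice $H_1\gg H_2$, then makes the term $H_1|\sigma_1\xi|$ dominate $H_2|\sigma_2\xi_2|$. Summing over $h$ shows $\|\gamma_t\|\le\|\gamma_0\|$, and taking the infimum over $\gamma_0$ gives $d_\nu(V(t),W(t))\le d_\nu(V(0),W(0))$.

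\textbf{Main obstacle.} I expect the hard step to be the boundary reflection bookkeeping: the constants $H_1,H_2$ must be chosen compatibly with both the interior interaction estimates and the reflection bound. This requires the reflected shift rate to be bounded in terms of the incident one. I would attempt to derive that bound by differentiating the boundary Riemann solver (item (2) of the lemma in Section~\ref{sec:FTA}) with respect to the front position, following Section 3 of \cite{AC97}.
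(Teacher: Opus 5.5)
Your outline follows the only route available here: the paper gives no proof of its own and defers to \cite{BC95} and \cite{AC97}. Your lower bound in the equivalence and your treatment of interior interactions are a fair summary of that argument. The step that does not go through as written is the boundary reflection, which is exactly the part you single out as new.

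When the $1$-wave $\sigma_1$ hits $x=0$, the term $H_1(2+\sgn\sigma_1)|\sigma_1\xi|$ is replaced by $H_2(2+\sgn\sigma_2)|\sigma_2\xi_2|$, but $\Upsilon_\xi$ also changes through $H_3\mathcal{Q}_\xi$. The reflected $2$-wave is the leftmost front, so it forms new approaching pairs with every $1$-wave to its right, and with $2$-waves to its right whenever one of the two is a shock. This adds terms $H_3H_2H_j|\sigma_2\sigma_\beta|(2+\sgn\sigma_\beta)|\xi_\beta|$, which involve the shift rates of \emph{other} fronts. They are present even when $\xi=0$, so no choice of $H_1\gg H_2$ lets $H_1|\sigma_1\xi|$ dominate them, and your conclusion $\|\gamma_t\|\le\|\gamma_0\|$ does not follow.

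These terms are absorbed by the exponential factor instead. Since $K_1=4C_b>C_b$, at a hit one has $\Delta L\le-(K_1-C_b)|\sigma_1|$ and $\Delta Q\le C_b|\sigma_1|L$; this is the computation $\Delta(L+\kappa Q)\le-\frac{K_1}{2}|\sigma_1|$ recalled in the paper. Hence $e^{H_4(L+Q)}$ drops by a factor $e^{-cH_4|\sigma_1|}$. This beats the growth, which is at most $H_2H_3C_b|\sigma_1|$ times the bracket in $\Upsilon_\xi$, once $H_4\gg H_2H_3$. The part of the new pairs carrying $|\xi_2|$ is bounded by a constant times $L|\sigma_1\xi|$ and is handled by smallness of the total variation. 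So the role you assign to $K_1=4C_b$ is misplaced: comparing $H_1|\sigma_1\xi|$ with $H_2|\sigma_2\xi_2|$ only needs $H_1/H_2>C_b\Lambda/c$, while $K_1>C_b$ is what makes the exponential weight work.

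Two smaller points. First, what you call the main obstacle is elementary. The states do not depend on $\theta$, so the hitting time is $t^*(\theta)=(x_0+\xi\theta)/|\lambda^{(1)}|$. The reflected front $x=\lambda^{(2)}(t-t^*(\theta))$ then has shift rate exactly $\xi_2=-\lambda^{(2)}\xi/|\lambda^{(1)}|$, and there is nothing to differentiate in the boundary solver.

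Second, for the bound $d_\nu\le K_2\|V-W\|_{L^1}$, moving the discontinuities of $V$ cannot change its constant states. The standard choice is the sweep $\gamma(\theta)=W\chi_{(0,\theta]}+V\chi_{(\theta,\infty)}$, which has a single jump moving at unit rate. It gives $\|\gamma\|\le C\|V-W\|_{L^1}$ with no error term, provided $V$ and $W$ have total variation small enough relative to $\delta$ that $\gamma(\theta)\in\mathcal{D}$ for every $\theta$.
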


We also obtain an $L^1$ estimate on the difference between a shifted and an unshifted $\nu$-approximate solution by following the argument of Lemma 4.2 in \cite{CKF25}. 
At each time an interaction or a boundary hit occurs, the variation of the distance between the shifted and unshifted approximate solutions is determined by the shock strengths and the difference between the shifted and interpolated wave speeds associated with the shocks.
The overall proof follows the same reasoning as in \cite{CKF25}, except that boundary hits must also be taken into account; however, since the uniform bounds on the wave speeds in (\ref{uniform bound on speed of the wave}) hold for all fronts in both the shifted and unshifted $\nu$-approximate solutions, the same argument applies.

\begin{lem}\label{lem: L1 difference}
Let $T>0$.
Let $V$ be an unshifted $\nu$-approximate solution constructed on $[0,T]$ as described in Section \ref{sec:FTA} and $\psi$ a shifted $\nu$-approximate solution constructed on $[0,T]$ as described in Section \ref{sec:SFTA}. Then in the sense of distributions,
\begin{equation}\label{l1 distance FT}
   \frac{d}{dt}d_\nu(V(\cdot, t), \psi(\cdot, t)) \leq C \sum_{\alpha \in \mathcal{S}(t)} |\sigma_\alpha|\,|\dot{h}_\alpha(t) - \dot{h}_{true, \alpha}(t)|\,d\tau \, ,
\end{equation}
where $\mathcal{S}(t)$ is the set of all shocks in $\psi$ at time $t$, $\sigma_\alpha$ is the strength of the shock $\alpha$, $\dot{h}_\alpha$ is the shifted speed defined for the shock $\alpha$ in Proposition \ref{Prop:S}, and $\dot{h}_{true,\alpha}$ is the speed of the interpolated shock $\alpha$ starting from the jump discontinuity $x_\alpha(t)$ in $\psi$, as constructed in Section \ref{sec:FTA}.
\end{lem}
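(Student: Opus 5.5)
The plan is to follow the argument of Lemma~4.2 in \cite{CKF25}, adapted to the half-line. Fix $t$ outside the finite set of interaction and boundary-hitting times of $V$ and $\psi$. On such an interval both $V(\cdot,t)$ and $\psi(\cdot,t)$ are piecewise constant with fixed wave configurations. Every front of $\psi$ moves with the same speed as in the unshifted scheme, except the shock fronts, which move with $\dot h_\alpha$ instead of $\dot h_{true,\alpha}$.

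The key device is an auxiliary function. For small $\tau>0$ we let $V_*(\cdot,\tau)$ denote the unshifted $\nu$-approximate evolution $\mathcal{S}_\tau\psi(\cdot,t)$ of the datum $\psi(\cdot,t)$. Since $d_\nu$ is a metric, the triangle inequality gives
\[
d_\nu(V(t+\tau),\psi(t+\tau)) \le d_\nu(\mathcal{S}_\tau V(t),\mathcal{S}_\tau\psi(t)) + d_\nu(V_*(\tau),\psi(t+\tau)).
\]
By the contraction in Proposition~\ref{prop L^1 semigroup}, the first term is at most $d_\nu(V(t),\psi(t))$. It therefore suffices to bound the second term by $C\tau\sum_{\alpha\in\mathcal{S}(t)}|\sigma_\alpha||\dot h_\alpha-\dot h_{true,\alpha}|+o(\tau)$.

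For small $\tau$, $V_*(\tau)$ and $\psi(t+\tau)$ have exactly the same states. They differ only in the positions of the shock fronts $x_\alpha+\tau\dot h_{true,\alpha}$ versus $x_\alpha+\tau\dot h_\alpha$. Hence they are joined by an elementary path as in \eqref{def:EP}: parametrize $\theta\in[0,\tau]$, move each shock front with shift rate $\xi_\alpha=\dot h_\alpha-\dot h_{true,\alpha}$, and keep all other fronts fixed ($\xi_\beta=0$). This is exactly why the interpolated shock is retained in \eqref{def:sFTS} even for weak shocks: it keeps the states identical. Evaluating \eqref{def:EPnorm} on this path gives
\[
\|\gamma\|\le \tau\Big(\sum_\alpha 3H_i|\sigma_\alpha\xi_\alpha| + H_3\mathcal{Q}_\xi\Big)e^{H_4(L+Q)},
\]
and $\mathcal{Q}_\xi\le C\,L\sum_\alpha|\sigma_\alpha\xi_\alpha|$. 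Since $L+Q<\delta$ on $\mathcal{D}$, this yields the desired bound, and passing to the limit $\tau\to0$ gives the differential inequality between event times.

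The boundary does not alter this elementary-path computation. The reason is \eqref{uniform bound on speed of the wave}: all fronts of $\psi$ and $V_*$ have speeds bounded away from zero, so for small $\tau$ no front reaches $x=0$. Moreover, the leftmost state of both functions satisfies the same boundary condition \eqref{small BV BC}, so the path stays inside the domain on which the Amadori--Colombo functional is defined.

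The main obstacle is the event times, namely interactions and boundary reflections, and showing that $d_\nu(V,\psi)$ has no upward jump there, so that the inequality holds distributionally. I would argue as follows. At an event time $t_*$, $\psi(t_*^+)$ is obtained from $\psi(t_*)$ by solving interpolated (boundary) Riemann problems, exactly as the unshifted scheme does. Hence $\psi(t_*^+)=\mathcal{S}_{0^+}\psi(t_*)$, and by the $L^1$-continuity of $\psi$ in time together with the continuity of $d_\nu$, no jump occurs. Near the boundary this step must invoke the pseudopolygonal-preservation result of \cite{AC97} for boundary Riemann solvers, rather than the Cauchy version in \cite{BC95}.
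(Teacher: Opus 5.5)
Your argument is correct. Between event times it is the paper's argument: the auxiliary unshifted evolution $V_*$ of the datum $\psi(\cdot,t)$, the triangle inequality, contraction of $d_\nu$ for the two unshifted solutions, and an elementary path that moves only the shock fronts. This is exactly \eqref{est:l1}--\eqref{l1 distance eq 1}.

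Where you depart is at interaction and boundary-hitting times.
\begin{itemize}
\item The paper rules out an upward jump by extending $V_*$ \emph{backward} through the event. It checks that $V_*$ has the same wave configuration as $\psi$ on $(t_*-\delta,t_*+\delta)$, using the Lax condition, equal rarefaction speeds and \eqref{uniform bound on speed of the wave}. It then applies the contraction of Proposition~\ref{prop L^1 semigroup} to $V$ and $V_*$ across $t_*$, which yields \eqref{l1 distance FT 2}.
\item You instead use that $V$ and $\psi$ are Lipschitz in time with values in $L^1$, since they have finitely many fronts, bounded speeds and bounded variation. The triangle inequality and the upper bound $d_\nu\le K_2\|\cdot\|_{L^1}$ from Proposition~\ref{prop L^1 semigroup} then make $t\mapsto d_\nu(V(t),\psi(t))$ Lipschitz.
\end{itemize}
Your route is shorter and gives full continuity rather than one-sided semicontinuity. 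The price is that it relies on the upper half of the $L^1$-equivalence holding for the relevant pairs. The paper's route needs only contraction and the elementary-path bound, together with the same-configuration fact it has to establish anyway. Note also that your observation $\psi(t_*+)=\mathcal{S}_{0+}\psi(t_*)$ plays no role once you argue by continuity.

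Two small points to tidy.
\begin{itemize}
\item The shift $h_\alpha$ from Proposition~\ref{Prop:S} is only Lipschitz. The displacement of a shock front over $[t,t+\tau]$ is therefore $\int_t^{t+\tau}(\dot h_\alpha-\dot h_{true,\alpha})\,ds$, not $\tau(\dot h_\alpha-\dot h_{true,\alpha})$. Write the estimate in this integrated form, as in \eqref{l1 distance eq 1}.
\item State explicitly how the local forward bound plus continuity gives the distributional inequality. For example, use a.e.\ differentiability of the Lipschitz function $d_\nu(V(t),\psi(t))$ at Lebesgue points of the $\dot h_\alpha$.
\end{itemize}
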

\begin{figure}[h!]
\centerline{\includegraphics[width=0.50\textwidth]{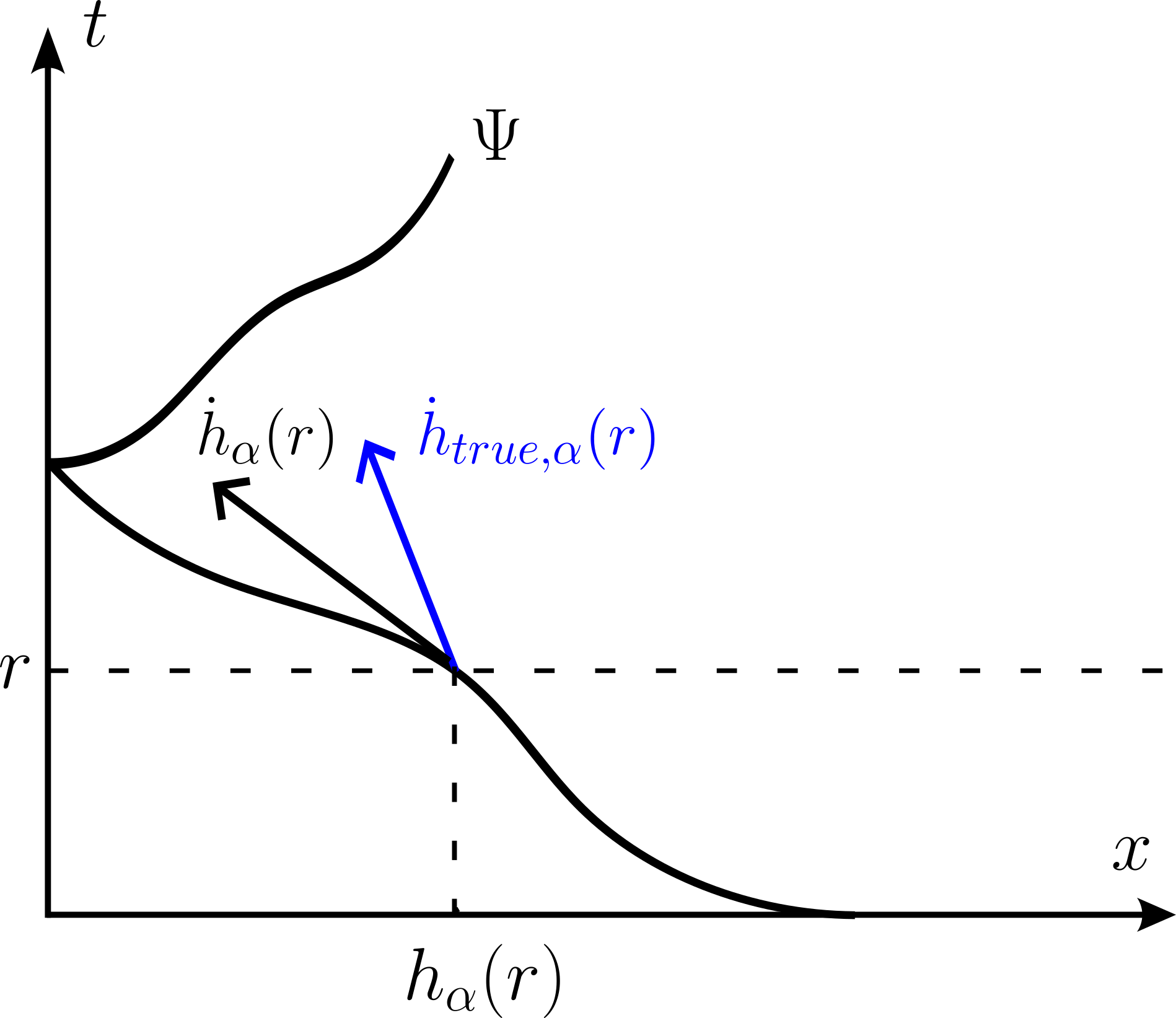}}
\caption[$\dot{h}_{\alpha}$ and $\dot{h}_{true,\alpha}$]{$\dot{h}_\alpha$ denotes the shifted speed of the shock $\alpha$ as defined in Proposition~\ref{Prop:S}, while $\dot{h}_{\mathrm{true}, \alpha}$ represents the true shock speed connecting the left and right states $\psi(r,h_\alpha(r)-)$ and $\psi(r,h_\alpha(r)+)$, as constructed in Section \ref{sec:FTA}.} \label{fig:htrue}
\end{figure}
We refer the reader to Figure~\ref{fig:htrue} for an illustration of $\dot{h}_{\mathrm{true},\alpha}$ and $\dot{h}_{\alpha}$.

\begin{proof}
Although the proof is similar to that in \cite{CKF25}, we provide a proof here for completeness.

\noindent
\textbf{Same wave configuration.}
Let $0\leq \ts< T$. We define $V_*(x,t)$ locally as the unshifted $\nu$-approximate solution constructed both backward and forward in time from the data $\psi(x,\ts)$. We claim that there exists $\delta>0$ such that $V_*$ is well-defined on $(\ts-\delta,\ts+\delta)\cap[0,T]$ and has the same wave configuration as $\psi$; that is, $V_*$ and $\psi$ consist of the same waves with identical relative positions.

Assume first that $\psi$ has neither an interaction nor a boundary interaction at time $\ts$. By the uniform bound on the speeds of all waves in \eqref{uniform bound on speed of the wave}, there exists $\delta>0$ such that neither $V_*$ nor $\psi$ undergoes an interaction or boundary interaction on $(\ts-\delta,\ts+\delta)\cap[0,T]$. Hence, they have the same wave configuration throughout this interval.

Assume now that an interaction or boundary interaction occurs at time $\ts$. We first observe that, in both the shifted and unshifted front-tracking algorithms, the speeds of rarefaction fronts are prescribed in the same way, and \eqref{uniform bound on speed of the wave} ensures that every front of the first family has negative speed, whereas every front of the second family has positive speed. On the other hand, consider two adjacent waves in the unshifted approximate solution, so that the right state of the left wave coincides with the left state of the right wave. If at least one of them is a shock, then the Lax entropy condition implies that the left wave travels faster than the right wave. Together with the fact that two rarefaction fronts of the same family cannot approach each other, these observations imply that the unshifted $\nu$-approximate solution, whether constructed locally forward or backward in time, has the same wave configuration as $\psi$, although the corresponding waves may propagate at different speeds.

\noindent
\textbf{Quantitative estimates.} For any $\ts-\delta \leq t_1 \leq \ts \leq t_2\leq \ts + \delta$, Proposition \ref{prop L^1 semigroup} implies that
\begin{equation*}
    \begin{aligned}
        &d_\nu(V(t_2,\cdot),\psi(t_2,\cdot))-d_\nu(V(t_1,\cdot),\psi(t_1,\cdot))\\
        &\leq d_\nu(V(t_2,\cdot),V_*(t_2,\cdot)) + d_\nu(V_*(t_2,\cdot),\psi(t_2,\cdot))\\
        &\quad  - d_\nu(V(t_1,\cdot),V_*(t_1,\cdot)) +
        d_\nu(V_*(t_1,\cdot),\psi(t_1,\cdot)) \\
        &\leq d_\nu(V_*(t_2,\cdot),\psi(t_2,\cdot)) + d_\nu(V_*(t_1,\cdot),\psi(t_1,\cdot)) \rightarrow 0  \, .
    \end{aligned}
\end{equation*}
Letting $t_1$ and $t_2$ tend to $t_*$, we obtain
\begin{equation}\label{l1 distance FT 2}
   \limsup_{s \to t_*+} \; d_\nu(V(\cdot,s),\psi(\cdot,s)) \leq \liminf_{s \to t_*-}\; d_\nu(V(\cdot,s),\psi(\cdot,s)) \, .
\end{equation}
On the other hand, Proposition \ref{prop L^1 semigroup} implies that for $\ts\leq t\leq \ts+\delta$,
\begin{equation}\label{est:l1}
    \begin{aligned}
        &d_\nu(V(t,\cdot),\psi(t,\cdot))-d_\nu(V(\ts+,\cdot),\psi(\ts+,\cdot)) \\
        &= d_\nu(V(t,\cdot),\psi(t,\cdot))-d_\nu(V(\ts+,\cdot),V_*(\ts+,\cdot)) \\
        &\leq d_\nu(V(t,\cdot),\psi(t,\cdot))-d_\nu(V(t,\cdot),V_*(t,\cdot)) \\
        &\leq d_\nu(\psi(t,\cdot),V_*(t,\cdot)).
    \end{aligned}
\end{equation}
We define an elementary path, denoted by $\gamma_*$, joining $\psi(t,\cdot)$ to $V_*(t,\cdot)$ as
\[
x_\alpha^\th(s) = x^*_\alpha(s) + \xi_\alpha \theta, \quad \theta \in [0,1], \, \,  s \in [t_*,t].
\]
Here, $x^*(s)$ for $[t_*,t]$ and $x_\alpha^\theta(t)\big|_{\theta=1}$ are the positions of any pair of shock waves, for $V_*$ and $\psi$, respectively. Since the locations of rarefaction fronts for $\psi$ and $V_*$ are the same, it follows from \eqref{def:EPnorm}, \eqref{def:dnu}, and \eqref{nonincreasing V+Q}, that 
\begin{equation}\label{l1 distance eq 1}
    \begin{aligned}
        d_\nu(\psi(t,\cdot),V_*(t,\cdot)) &\leq \|\gamma_*(t)\| \leq C \sum_{\alpha\in \mathcal{S}(t)} |\sigma_\alpha \xi_\alpha|\\
        &\leq C \sum_{\alpha\in \mathcal{S}(t)} |\sigma_\alpha| |x_\alpha(t) - x^*_\alpha(t)|\\
        &\leq C \sum_{\alpha\in \mathcal{S}(t)} |\sigma_\alpha| \int_{\ts}^t \,|\dot{h}_\alpha(s) - \dot{h}_{true, \alpha}(s)| \, ds \, .
    \end{aligned}
\end{equation}
By \eqref{l1 distance FT 2} and \eqref{l1 distance eq 1}, we complete the proof.

\end{proof}

\section{Boundary condition for the limit of the unshifted approximate solutions}
\label{sec combination and separation}
The goal of this section is to prove Proposition \ref{lem:BC}, which asserts that since unshifted $\nu$-approximate solutions satisfy the inflow/outflow boundary condition \eqref{small BV BC}, any limit of them (up to a subsequence) also satisfies the boundary condition \eqref{small BV BC}. 
This result will later be used in Section \ref{subsec proof of them main theorem} to identify the limit (up to a subsequence) of the unshifted $\nu$-approximate solutions as the small BV solution, thereby establishing the existence and uniqueness of small BV solutions.

Over time, shocks of the same family may merge, whereas rarefactions may be partitioned. In Section~\ref{subsec label}, we associate with each wave a set of elements encoding its formation history and assign a tracked strength to each element. These tracked strengths are defined so that their sum over the elements in the set associated with a wave is exactly equal to the strength of that wave. In Section~\ref{subsec evolution}, we prove that the tracked strength of each individual element is bounded by its strength at the time when it is introduced. Consequently, after fixing $x=x_*$, we can control the total variation of the waves crossing the line $x=x_*$. This is the main idea underlying the proof of Proposition~\ref{lem:BC}, which is given in Section~\ref{subsec limit of approximate solutions}.

Now, fix $T>0$. By slightly perturbing the initial data of the unshifted $\nu$-approximate solutions, we may assume that the unshifted $\nu$-approximate solutions satisfy the following properties:
\begin{itemize}
  \item[1)] no multiple-wave interaction or multiple-wave hitting on $[0,T]$: each interaction involves exactly two incoming waves, and each boundary hitting involves only one incoming wave.
  \item[2)] at most one event at a time on $[0,T]$: at each time $t$, there is at most one event (either an interaction or a boundary hitting).
\end{itemize}

\subsection{Wave-associated sets, elements, and tracked strengths} \label{subsec label}

Since shocks may merge forward in time, whereas rarefactions may be partitioned forward in time, we define the shock-associated sets forward in time and the rarefaction-associated sets backward in time, as follows.

At time $t=0$, we enumerate the shocks from left to right and assign to them the sets
\begin{align*}
    \{-1\},\ \ldots,\ \{-N_S(0)\}.
\end{align*}
Here, $N_S(t)$ denotes the total number of elements introduced into the sets associated with shocks. It will be defined forward in time and changes only at an event. In particular, $N_S(0)$ is the number of shocks at time $t=0$.

For rarefactions, we proceed backward in time. At time $T+$, we enumerate the rarefactions from left to right and assign to them the sets
\begin{align*}
    \{1\},\ \ldots,\ \{N_R(T+)\}.
\end{align*}
Here, $N_R(t)$ denotes the total number of elements introduced into the sets associated with rarefactions. It will be defined backward in time and changes only at an event. In particular, $N_R(T+)$ is the number of rarefactions at time $t=T+$.

To each element $b$ in a set associated with either a shock or a rarefaction, we assign a tracked strength $\s_b$. These tracked strengths may change only at an event. At $t=0$, for each shock element $b$, we set $\s_b(0)$ equal to the strength of the corresponding shock, whose associated set is $\{b\}$. Similarly, at $t=T+$, for each rarefaction element $b$, we set $\s_b(T+)$ equal to the strength of the corresponding rarefaction, whose associated set is $\{b\}$.

Before describing how sets are assigned to waves and how new elements are introduced at an event, either an interaction or a boundary hitting, we recall the following wave estimate proved in \cite{BC95}.

\begin{lem}\cite{BC95}\label{lem:Glimm}
Assume two waves $\s, \widetilde{\s}$ interact at time $t$. Let $\s'$, $\s''$ be the strengths of the outgoing waves.

Then, there exists a constant $C_0$ (uniformly valid for $U \in B_{\e_0}(U_b)$) such that the followings hold.\\
$\bullet$ If $\s$, $\widetilde{\s}$, and $\s'$ are $i$-waves, and $\s''$ is a $j$-wave with $i\neq j$, then
\[
|\s' - (\s + \widetilde{\s})| + |\s''| \leq C_0 |\s \widetilde{\s}| (|\s| + |\widetilde{\s}|).
\]
$\bullet$ If $\s$ and $\s'$ are $1$-waves, and $\widetilde{\s}$ and $\s''$ are $2$-waves, then 
\[
|\s - \s'| + |\widetilde{\s} - \s''| \leq C_0 |\s \widetilde{\s}| (|\s| + |\widetilde{\s}|).
\]
\end{lem}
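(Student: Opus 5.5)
The statement is the classical interaction estimate of Bressan--Colombo for the interpolated Riemann solver. I will prove it through a smooth-dependence (Glimm-type) argument in Riemann coordinates $V=(V_1,V_2)$.

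Fix the left state $V^l$ of the interaction, and write the incoming configuration as a composition of interpolated wave curves. In the first case, two $i$-waves $\s$ and $\widetilde\s$ meet, and the right state is $V^r=\Phi_i^\nu(\Phi_i^\nu(V^l,\s),\widetilde\s)$. In the second case, a $2$-wave $\widetilde\s$ on the left meets a $1$-wave $\s$ on the right, so $V^r=\Phi_1^\nu(\Phi_2^\nu(V^l,\widetilde\s),\s)$. By part (1) of the lemma preceding the construction, the outgoing strengths $(\s_1,\s_2)$ are uniquely determined by $\Phi_2^\nu(\Phi_1^\nu(V^l,\s_1),\s_2)=V^r$. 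I then view $(\s_1,\s_2)$ as a function $\Psi(V^l,\s,\widetilde\s)$, which is $C^2$ (indeed smooth) because $\Phi_i^\nu$ is a smooth interpolation of $S_i$ and $R_i$. The Jacobian of $(\s_1,\s_2)\mapsto \Phi_2^\nu(\Phi_1^\nu(V^l,\s_1),\s_2)$ is close to the identity near $V_b$, so the implicit function theorem applies uniformly in $B_{\e_0}(V_b)$.

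The key observation is that in Riemann coordinates the waves are almost decoupled. We have $R_1$ changing only $V_1$ and $R_2$ changing only $V_2$, while $S_i$ differ from $R_i$ by $O(\sigma^3)$ in the other component, and the interpolation $\phi$ preserves this structure. I will then check that the error function $E(\s,\widetilde\s):=\Psi(V^l,\s,\widetilde\s)-(\text{expected values})$ vanishes when $\s=0$ and when $\widetilde\s=0$. Here the expected values are $(\s+\widetilde\s,0)$ in the first case and $(\s,\widetilde\s)$ in the second. This vanishing holds trivially, since a single wave reproduces itself. It follows that $E=\s\widetilde\s\,G(\s,\widetilde\s)$ with $G$ bounded, giving the standard quadratic estimate $O(|\s\widetilde\s|)$.

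The cubic improvement $|\s\widetilde\s|(|\s|+|\widetilde\s|)$ is the main obstacle. To get it, I also need the mixed derivative $\partial_\s\partial_{\widetilde\s}E$ to vanish at $(0,0)$. For this I use the second-order tangency of $S_i$ with $R_i$ together with the fact that, in Riemann coordinates, the rarefaction curves are straight coordinate lines, so the vector fields $r_1,r_2$ are constant and commute. Expanding to second order, the composed map agrees with the pure rarefaction composition up to $O(\sigma^3)$ terms. For rarefactions the composition is exactly additive and commutative, so the bilinear term vanishes.

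One subtlety is that $\Phi_i^\nu$ depends on $\nu$ through $\phi(\sigma/\sqrt\nu)$, whose derivatives blow up as $\nu\to0$. I handle this by noting that $\Phi_i^\nu-R_i=\phi(\sigma/\sqrt\nu)(S_i-R_i)$. The factor $S_i-R_i=O(\sigma^3)$ with smooth dependence on the base point, and $\phi$ is bounded, so the $\sigma$-derivative of the perturbation is $O(\sigma^3/\sqrt\nu+\sigma^2)$, which is $O(\sigma^2)$ on the support of $\phi'$. The $V$-derivatives are harmless, and they are the ones relevant for the mixed term. This bookkeeping gives a constant $C_0$ independent of $\nu$ and of the base state in $B_{\e_0}(U_b)$, which is exactly what \cite{BC95} asserts. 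I would cite their Lemma for the full details.
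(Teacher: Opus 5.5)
Your strategy (Riemann coordinates, $E$ vanishing on both axes, mixed derivative vanishing at the origin) is the right one. The paper itself only quotes this lemma from Bressan--Colombo without proof. The gap is in the step you flag as the subtlety: uniformity in $\nu$, which is the real content of that lemma, is not verified at the order your argument needs.

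To turn ``$E(\sigma,0)=E(0,\tilde\sigma)=0$ and $\partial_\sigma\partial_{\tilde\sigma}E(0,0)=0$'' into $|E|\le C_0|\sigma\tilde\sigma|(|\sigma|+|\tilde\sigma|)$, you write $E=\sigma\tilde\sigma\int_0^1\!\int_0^1\partial_\sigma\partial_{\tilde\sigma}E(s\sigma,t\tilde\sigma)\,ds\,dt$. You then need a bound on $D^3E$ independent of $\nu$; already your quadratic step needs $D^2E$ bounded uniformly. Through the compositions and the implicit-function inversion, $D^3E$ contains $\partial_\sigma^k\Phi_i^\nu$ for every $k\le 3$:
\begin{itemize}
\item $\partial_\sigma^2\partial_{\tilde\sigma}\big[\Phi_1^\nu(\Phi_2^\nu(V^l,\tilde\sigma),\sigma)\big]$ contains $\partial_V\partial_\sigma^2\Phi_1^\nu$;
\item the third derivatives of the inverse of $(\sigma_1,\sigma_2)\mapsto\Phi_2^\nu(\Phi_1^\nu(V^l,\sigma_1),\sigma_2)$ contain $\partial_\sigma^3\Phi_1^\nu$.
\end{itemize}
So it is not true that only $V$-derivatives matter for the mixed term. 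Your bookkeeping controls only the first $\sigma$-derivative of $\Phi_i^\nu-R_i$, which does not suffice. ``Smooth for each $\nu$'' does not help either, since the $C^4$ norm of $\Phi_i^\nu$ blows up like $\nu^{-1/2}$.

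The gap can be closed. For $k\le3$,
\[
\partial_\sigma^k\big[\phi(\sigma/\sqrt\nu)(S_i-R_i)\big]=\sum_{j=0}^k\binom{k}{j}\nu^{-j/2}\phi^{(j)}(\sigma/\sqrt\nu)\,\partial_\sigma^{k-j}(S_i-R_i).
\]
Here $\partial_\sigma^m(S_i-R_i)=O(|\sigma|^{3-m})$, and on $\operatorname{supp}\phi^{(j)}(\cdot/\sqrt\nu)$ with $j\ge1$ one has $|\sigma|\le 2\sqrt\nu$. Hence every term is $O(|\sigma|^{3-k})$ uniformly in $\nu$, and $V$-derivatives never hit $\phi$. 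So $\Phi_i^\nu$ is bounded in $C^3$ uniformly in $\nu$. This is exactly $C^3$, and it is where third-order tangency is really used. The inverse map inherits the bound, and your Taylor argument closes. The vanishing of $\partial_\sigma\partial_{\tilde\sigma}E(0,0)$ is even immediate, because $\Phi_i^\nu=R_i$ for $\sigma\ge-\sqrt\nu$.

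Alternatively, you can drop the Taylor expansion. Write $\Phi_1^\nu(V,\sigma)=(V_1+\sigma,V_2+g_\nu(V,\sigma))$ and $\Phi_2^\nu(V,\sigma)=(V_1+f_\nu(V,\sigma),V_2+\sigma)$. The matching conditions are then explicit componentwise; for two $1$-waves they read $\sigma_1-\sigma-\tilde\sigma=-f_\nu(\Phi_1^\nu(V^l,\sigma_1),\sigma_2)$ and $\sigma_2=g_\nu(V^l,\sigma)+g_\nu(V^m,\tilde\sigma)-g_\nu(V^l,\sigma_1)$. The mean value theorem then gives the cubic estimate, after absorbing the small outgoing terms. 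It uses only $|g_\nu|+|\partial_Vg_\nu|\le C|\sigma|^3$ and $|\partial_\sigma g_\nu|\le C\sigma^2$, which are precisely the first-order bounds you did derive. For waves of different families the same computation even gives $O(|\sigma\tilde\sigma|(\sigma^2+\tilde\sigma^2))$.
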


\subsubsection{When an event occurs}\label{def of separation interactions}

Assume that, at time $t_*$, either two waves interact or a $1$-wave hits the boundary, and that one of the outgoing waves is an $i$-shock of strength $\s'$. By Lemma~\ref{lem:Glimm}, one of the following four cases occurs: two incoming $i$-shocks interact; exactly one incoming wave in an interaction is an $i$-shock; $i=2$ and a $1$-shock hits the boundary; or no incoming $i$-shock is involved, in which case the event is an interaction between two waves of the other family.

In the first case, we denote the strengths of the two incoming $i$-shocks by $\s$ and $\widetilde{\s}$, respectively. The sets associated with the two incoming $i$-shocks are of the form
\begin{align*}
    \{b_1,\ldots,b_n\} \, , \; \{b_{n+1},\ldots,b_m\} \, ,
\end{align*}
where $n,m \in \mathbb{N}_*$, $1\leq n<m$, and $-b_l\in\mathbb{N}_+$ for $1\leq l\leq m$.
Since the two incoming $i$-shocks merge into the outgoing $i$-shock, we assign the set
\begin{align*}
    \{b_1,\ldots,b_m\}
\end{align*}
to the outgoing $i$-shock (See Figure~\ref{Scase1}). For each $1\leq l\leq m$, we define the post-interaction tracked strength by
\begin{align*}
    \s_{b_l}(t_*+) = \frac{\s_{b_l}(t_*-)}{\s+\widetilde{\s}}\s' \, .
\end{align*}

From this definition, each element's strength is tracked exactly through the interaction. Indeed, the post-interaction tracked strengths are chosen to be proportional to the corresponding pre-interaction tracked strengths. More precisely, they preserve exactly the relative proportions of the $m$ tracked strengths before the interaction, while their sum is equal to the strength $\s'$ of the outgoing wave. That is,
\[
\frac{\s_{b_l}(t_*+)}{\s_{b_l}(t_*-)}=\frac{\s'}{\s+\widetilde{\s}},\quad \sum_{l=1}^m \s_{b_l}(t_*+) = \s', \quad \sum_{l=1}^n \s_{b_l}(t_*-) = \s,\quad \sum_{l=n+1}^m \s_{b_l}(t_*-) = \widetilde{\s}.
\]

In the second and third cases, we denote by $\s$ the strength of the incoming shock that gives rise to the outgoing $i$-shock: the incoming $i$-shock in the second case and, in the third case, where $i=2$, the incoming $1$-shock (See Figure~\ref{Scase2} and Figure~\ref{Scase3}). The sets associated with this incoming shock is of the form
\begin{align*}
    \{b_1,\ldots,b_n\} \, , 
\end{align*}
where $n \in \mathbb{N}_*$ and $-b_l\in\mathbb{N}_+$ for $1\leq l\leq n$.
We assign the same set to the outgoing $i$-shock. For each $1\leq l\leq n$, we define the post-interaction tracked strength by
\begin{align*}
    \s_{b_l}(t_*+) = \frac{\s_{b_l}(t_*-)}{\s}\s' \, .
\end{align*}

In the fourth case, both incoming waves belong to the other family (See Figure~\ref{Scase4}). We assign the set
\begin{align*}
    \{-N_S(t_*-)-1\}
\end{align*}
to the outgoing $i$-shock, and define 
\begin{align*}
    \s_{-N_S(t_*-)-1}(t_*+) = \s ' \, ,
\end{align*}
We then update
\begin{align*}
    N_S(t_*+) = N_S(t_*-) + 1 \, .
\end{align*}

\begin{figure}[!htbp]
\centering
\includegraphics[width=0.4\textwidth]{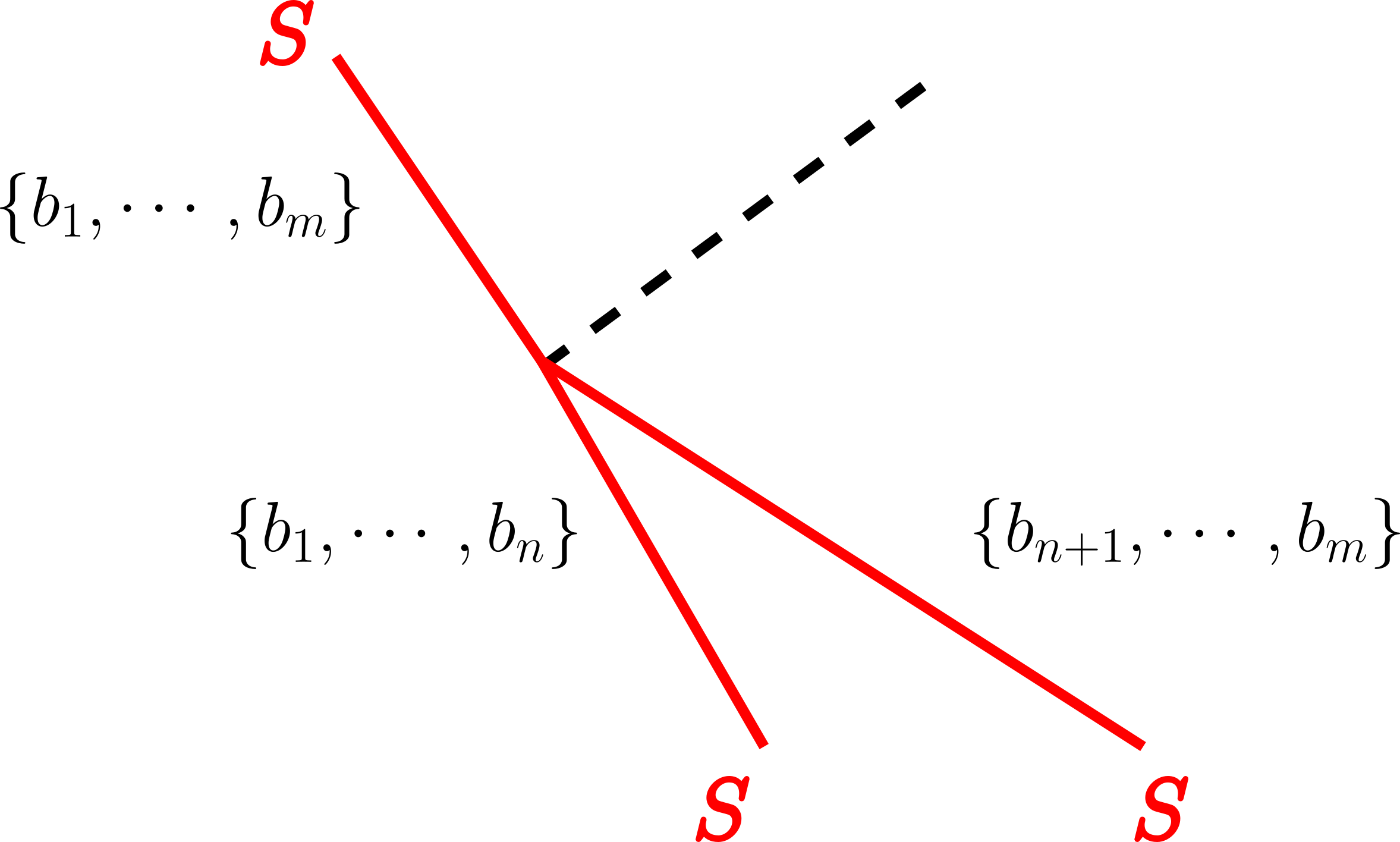}
\caption{Two incoming $i$-shocks merge into the outgoing $i$-shock.}
\label{Scase1}
\end{figure}

\begin{figure}[!htbp]
\centering
\begin{subfigure}{0.4\textwidth}
    \centering
    \includegraphics[width=\linewidth]{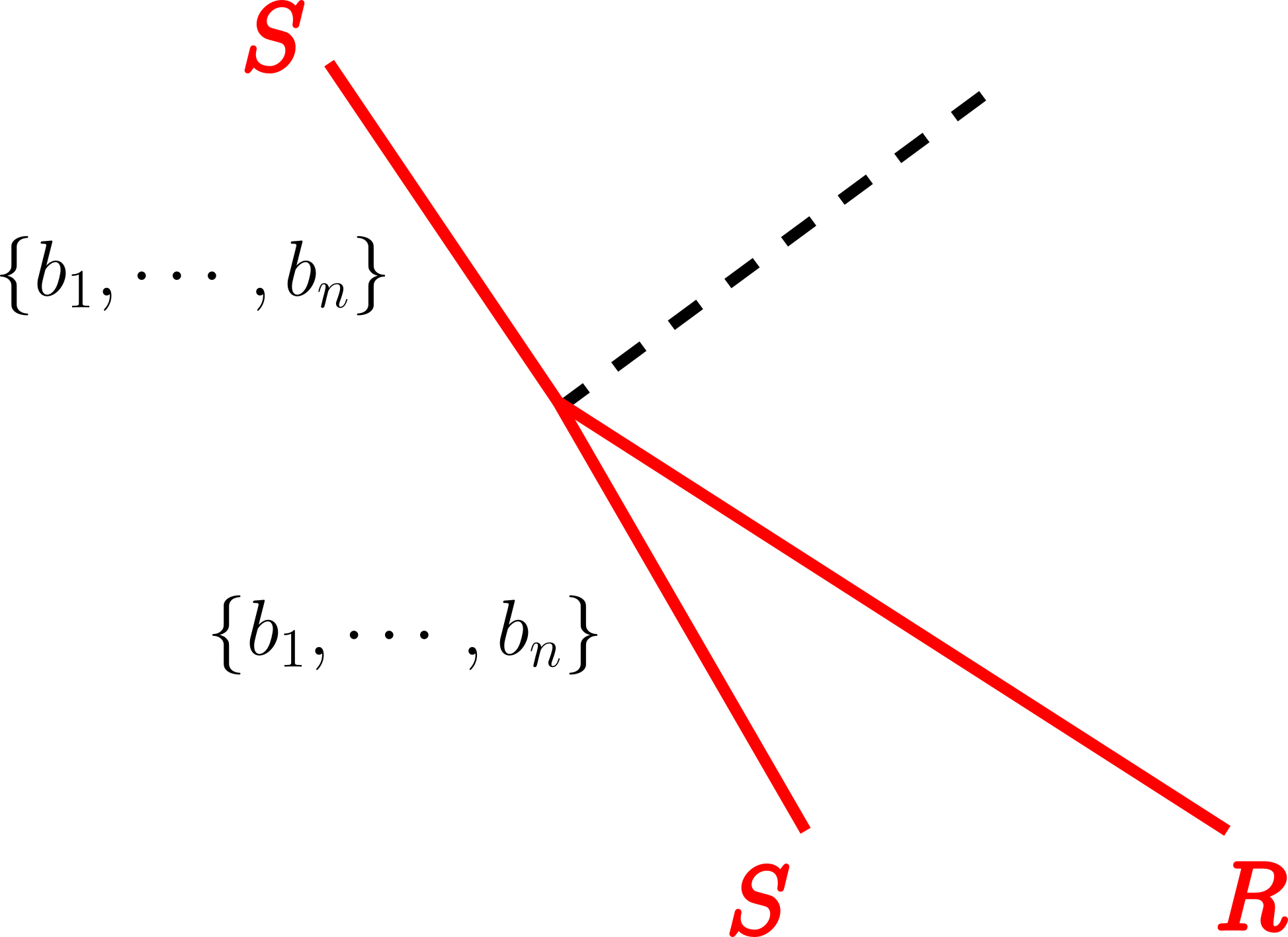}
\end{subfigure}
\hfill
\begin{subfigure}{0.4\textwidth}
    \centering
    \includegraphics[width=\linewidth]{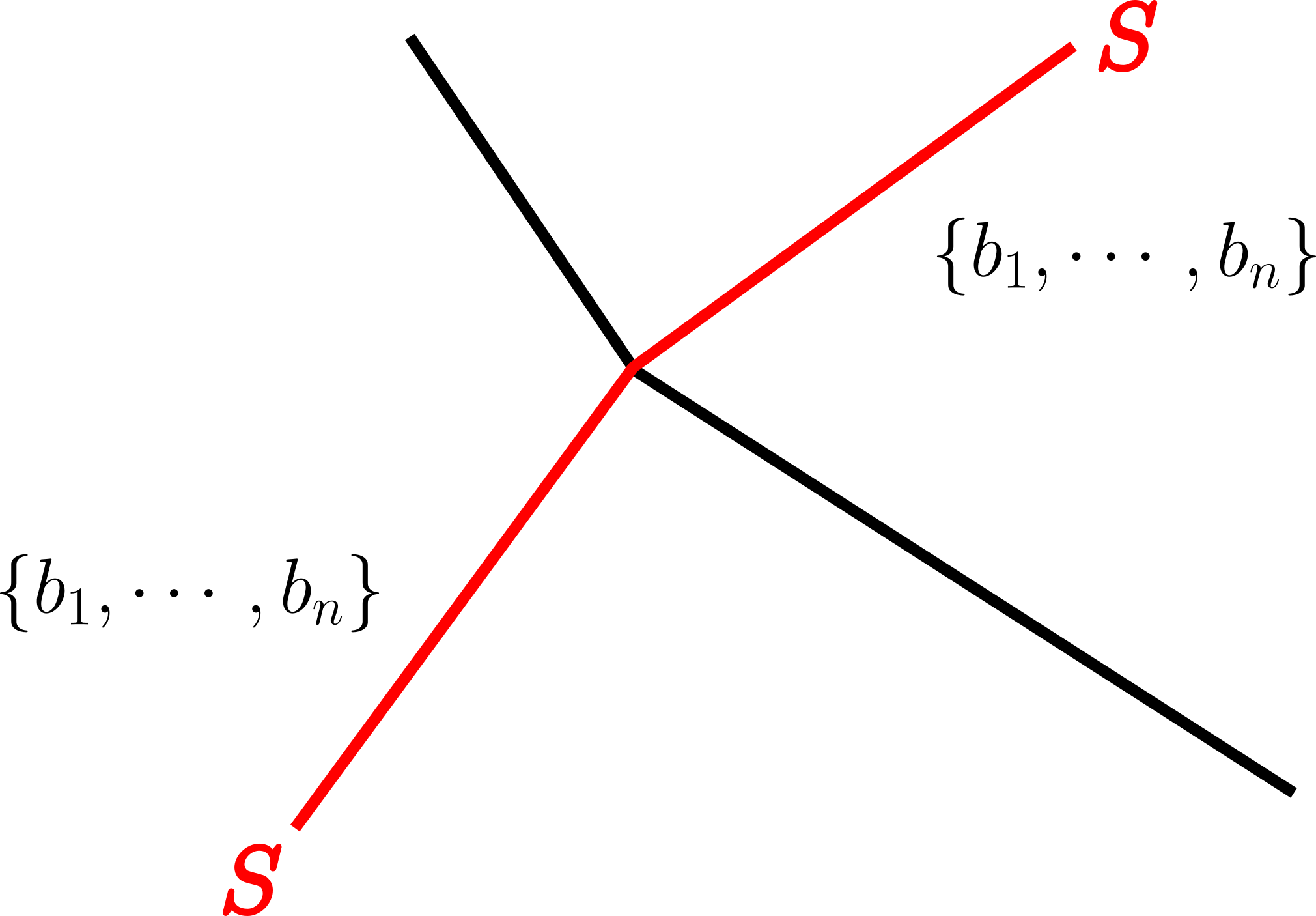}
\end{subfigure}
\caption{Exactly one of the incoming waves and the outgoing wave are $i$-shocks.}
\label{Scase2}
\end{figure}

\begin{figure}[!htbp]
\centering
\includegraphics[width=0.25\textwidth]{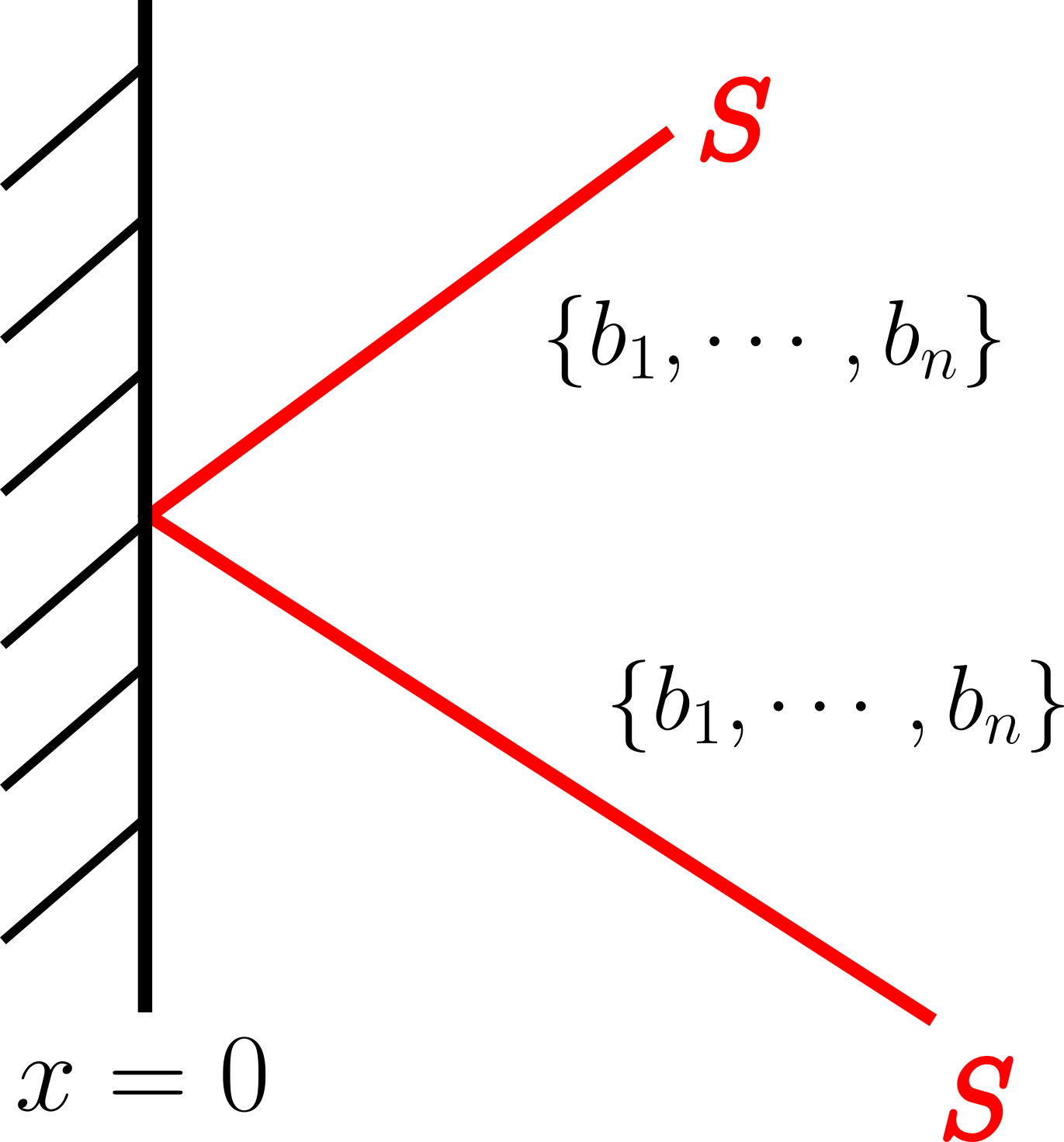}
\caption{$1$-shock hits the boundary.}
\label{Scase3}
\end{figure}

\begin{figure}[!htbp]
\centering
\includegraphics[width=0.3\textwidth]{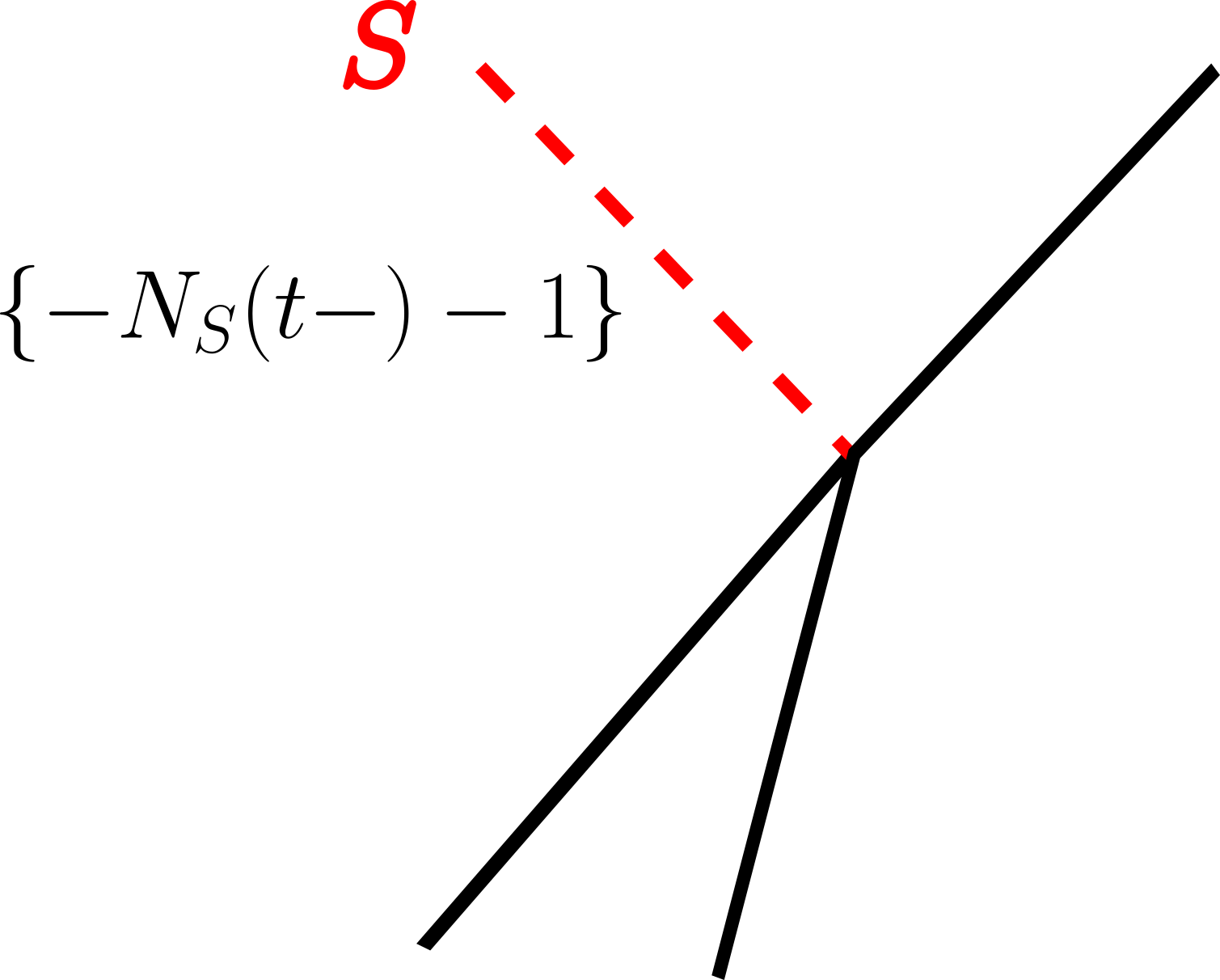}
\caption{No incoming $i$-shock is involved, whereas the outgoing wave is an $i$-shock.}
\label{Scase4}
\end{figure}
\medskip

Assume that, at time $t_*$, either two waves interact or a $1$-wave hits the boundary, and that one of the incoming waves is an $i$-rarefaction of strength $\s$. By Lemma~\ref{lem:Glimm}, one of the following three cases occurs: an interaction occurs and the outgoing waves contain exactly $M\in\mathbb{N}_+$ $i$-rarefactions; $i=1$ and a $1$-rarefaction hits the boundary; or an interaction occurs and none of the outgoing waves is an $i$-rarefaction.

In the first and second cases, we denote by $\s'$ the total strength of the $M$ outgoing rarefactions generated by the incoming $i$-rarefaction: the outgoing $i$-rarefactions in the first case and, in the second case where $i=1$, the outgoing $2$-rarefactions (See Figure~\ref{Rcase1} and Figure~\ref{Rcase2}). The sets associated with these outgoing rarefactions are of the form
\begin{align*}
    \{b_1^{(1)},\ldots,b_{n_1}^{(1)}\} \, , \; ... \, , \; 
    \{b_1^{(M)},\ldots,b_{n_M}^{(M)}\} \, ,
\end{align*}
where $n_k\in\mathbb{N}_*$ for $1\leq k \leq M$, and $b_l^{(k)}\in\mathbb{N}_*$ for $1\leq k \leq M$ and $1\leq l\leq n_k$. Since the incoming $i$-rarefaction is partitioned into these $M$ outgoing rarefactions, we assign to the incoming $i$-rarefaction the set
\begin{align*}
    \{b_1^{(1)},\ldots,b_{n_1}^{(1)} \, , \; \ldots \, , \; b_1^{(M)},\ldots,b_{n_M}^{(M)}\}.
\end{align*}
For each $1\leq k\leq M$ and $1\leq l\leq n_k$, we define the pre-interaction tracked strength by
\begin{align*}
    \s_{b_l^{(k)}}(t_*-) = \frac{\s_{b_l^{(k)}}(t_*+)}{\s'}\s \, .
\end{align*}

In the third case, none of the outgoing waves is an $i$-rarefaction (See Figure~\ref{Rcase3}). We assign the set
\begin{align*}
    \{N_R(t_*+)+1\}
\end{align*}
to the incoming $i$-rarefaction, and define 
\begin{align*}
    \s_{N_R(t_*+)+1}(t_*-) = \s \, ,
\end{align*}
We then update
\begin{align*}
    N_R(t_*-) = N_R(t_*+) + 1 \, .
\end{align*}

\begin{rem}
We define the location of an element to be the location of the wave whose associated set contains that element. Recall Section \ref{def of separation interactions}. At an interaction, if an element belongs to the set associated with an incoming $i$-wave and to the set associated with an outgoing $j$-wave, then necessarily $i=j$. 

Hence, an element generated in a set associated with a $2$-wave remains in sets associated with $2$-waves and travels to the right until it disappears. Conversely, an element generated in a set associated with a $1$-wave remains in sets associated with $1$-waves and travels to the left until it either disappears or the $1$-wave whose set contains it hits the boundary. In the latter case, the element becomes associated with a $2$-wave and subsequently travels to the right until it disappears.

We say that two elements interact if the waves whose associated sets contain them interact. The preceding observation yields the following result.
\end{rem}
\begin{lem}\label{lemma property of elements}
    For the elements defined in Section \ref{subsec label},
    \begin{itemize}
        \item Any two elements can interact at most twice.
        \item For every fixed $x_*$, each element crosses the line $x=x_*$ at most twice over its lifetime.
    \end{itemize}
\end{lem}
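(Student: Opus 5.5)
The plan is to derive both claims from a single structural description of an element's trajectory, which follows from the preceding remark. First I will check that every element has a well-defined location at each time of its lifetime, namely the position of the unique wave whose associated set contains it. This holds because the assignment rules at each event put every element into exactly one set: merging and relabeling for shocks forward in time, partitioning and relabeling for rarefactions backward in time. Next I will show that the family of that wave can change at most once, from $1$ to $2$, and only at a boundary hitting. At an interior interaction the rules attach an element of an incoming $i$-wave only to an outgoing $i$-wave (first and second shock cases, first rarefaction case). The only rules that change the family are the third shock case and the second rarefaction case, where a $1$-wave hits $x=0$. Since all $2$-waves move right, no $2$-wave ever reaches the boundary again, so no second change occurs.

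From \eqref{uniform bound on speed of the wave}, every $1$-front has speed at most $-c<0$ and every $2$-front has speed at least $c>0$, in both the shifted and unshifted schemes. So the location $x_b(t)$ of an element $b$ is a continuous, piecewise linear function of $t$ on its lifetime interval. Continuity holds because the incoming and outgoing waves at an event sit at the same point. Moreover, $x_b$ is strictly decreasing while $b$ is attached to $1$-waves and strictly increasing afterwards. Hence $x_b$ is strictly monotone on at most two consecutive time intervals, decreasing and then increasing, joined at a boundary hitting where $x_b=0$. A strictly monotone continuous function meets the level $x_*$ at most once, so $x_b$ meets $x=x_*$ at most twice. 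This proves the second bullet. I will treat $x_*=0$ separately: there the crossing reduces to the single boundary-hitting time, which is consistent with the bound.

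For the first bullet, take two elements $b,b'$ and suppose they interact at time $t_1$, so their waves meet at a point and they share a location. After an interaction I expect each element to be attached to an outgoing wave, though a shock-merging case may put both in the same set. The difference $x_b-x_{b'}$ changes sign at most finitely often, and I will bound this using the phase structure. On any common time interval where both elements belong to the same family, the claim is that after they separate they cannot meet again unless one of them changes family. For two $1$-waves, or two $2$-waves, two fronts that have crossed or merged cannot recross, since same-family fronts never cross. When one element is in a $1$-wave and the other in a $2$-wave, they meet at most once per such configuration, because a $1$-front lies to the left of a $2$-front after they cross, and from then on they only separate.

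Since each element has at most two phases, the pair of phase-labels can change at most twice in total. I will combine this with the at-most-one-meeting rule per configuration to get the bound of two interactions. I expect the main obstacle to be the same-family merged case: two elements lying in the same set are not "interacting" in the paper's sense. I will therefore interpret an interaction as a meeting of two distinct waves, and count carefully the configurations (1,1), (1,2), (2,2). The expected outcome is at most one genuine interaction before the first boundary reflection and one after.
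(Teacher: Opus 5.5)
Your proof of the second bullet is correct and matches the paper. The paper proves the lemma only through the preceding remark: an element changes family at most once, from $1$ to $2$, at a boundary hitting, and by \eqref{uniform bound on speed of the wave} its location is strictly monotone on each phase.

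The first bullet does not follow from the count you propose. Two phases per element give up to three pair-configurations, e.g.\ $(1,1)\to(2,1)\to(2,2)$. So ``at most one meeting per configuration'' only yields three interactions, not two.

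Your expected split, ``at most one interaction before the first boundary reflection and one after'', is false. Let $b,b'$ lie in two $1$-shocks, with $b$'s shock close enough to $x=0$ to reflect before the two shocks merge.
\begin{itemize}
\item The reflected $2$-shock carrying $b$ must cross $b'$'s $1$-shock before the latter reaches $x=0$.
\item When $b'$'s shock reflects, the new $2$-shock is adjacent to $b$'s $2$-shock and, by the Lax condition, faster than it, so the two merge.
\end{itemize}
This gives two interactions after the first reflection and none before. Also, the reason you give for the same-family case (``same-family fronts never cross'') is not the relevant mechanism.

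What is missing is the observation that an interaction between two same-family waves containing $b$ and $b'$ is \emph{terminal} for the pair:
\begin{itemize}
\item Two same-family rarefaction fronts never meet.
\item If both waves are shocks, they merge. Shock-associated sets are only carried over or united forward in time, never split. Hence $b$ and $b'$ remain in one common set for the rest of their lives, and elements of the same set never interact with each other.
\item If one wave is a shock and the other a rarefaction, the outgoing $i$-wave is of a single type. So one of the two elements is not transferred to any outgoing wave, and its lifetime ends.
\end{itemize}
Since families only change $1\to 2$, the mixed configuration $(1,2)$ or $(2,1)$ occupies a single time interval. On that interval $x_b-x_{b'}$ is strictly monotone, so there is at most one mixed-family interaction. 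Adding at most one terminal same-family interaction gives at most two in total, which is the bound you need.
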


\begin{figure}[!htbp]
\centering
\begin{subfigure}{0.45\textwidth}
    \centering
    \includegraphics[width=\linewidth]{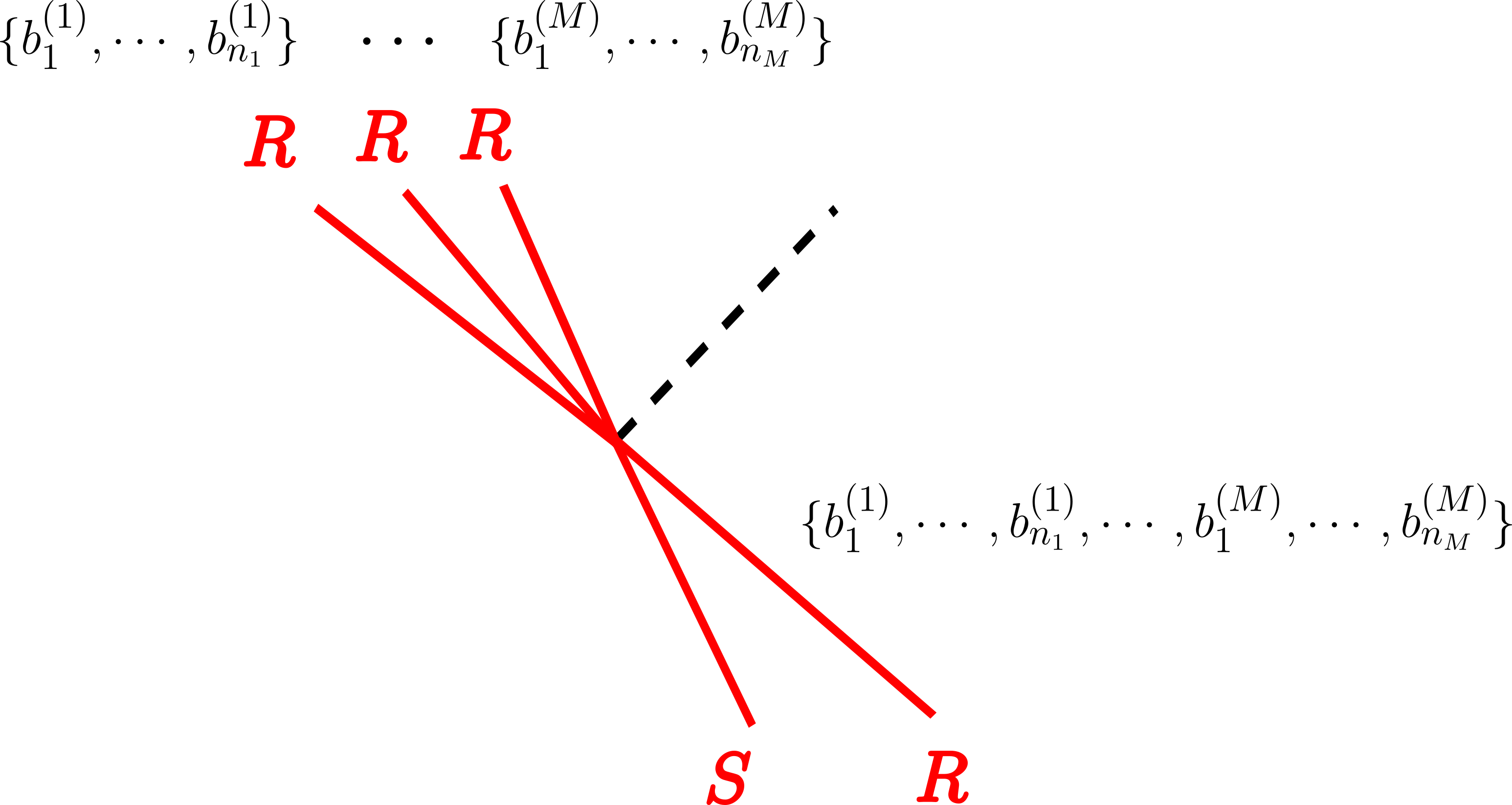}
\end{subfigure}
\hfill
\begin{subfigure}{0.45\textwidth}
    \centering
    \includegraphics[width=\linewidth]{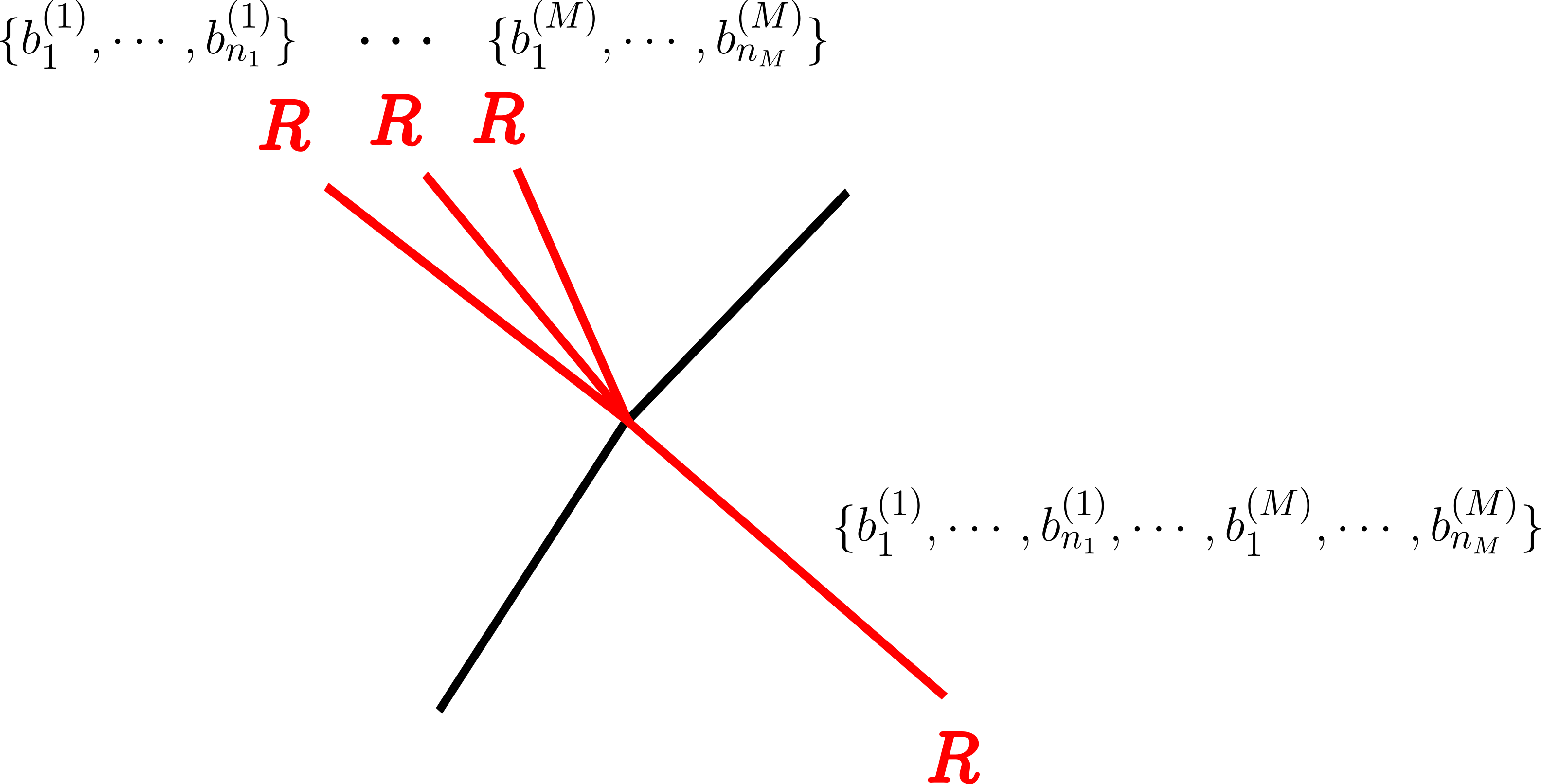}
\end{subfigure}
\caption{The outgoing waves contain exactly $M\in\mathbb{N}$ $i$-rarefactions.}
\label{Rcase1}
\end{figure}

\begin{figure}[!htbp]
\centering
\includegraphics[width=0.4\textwidth]{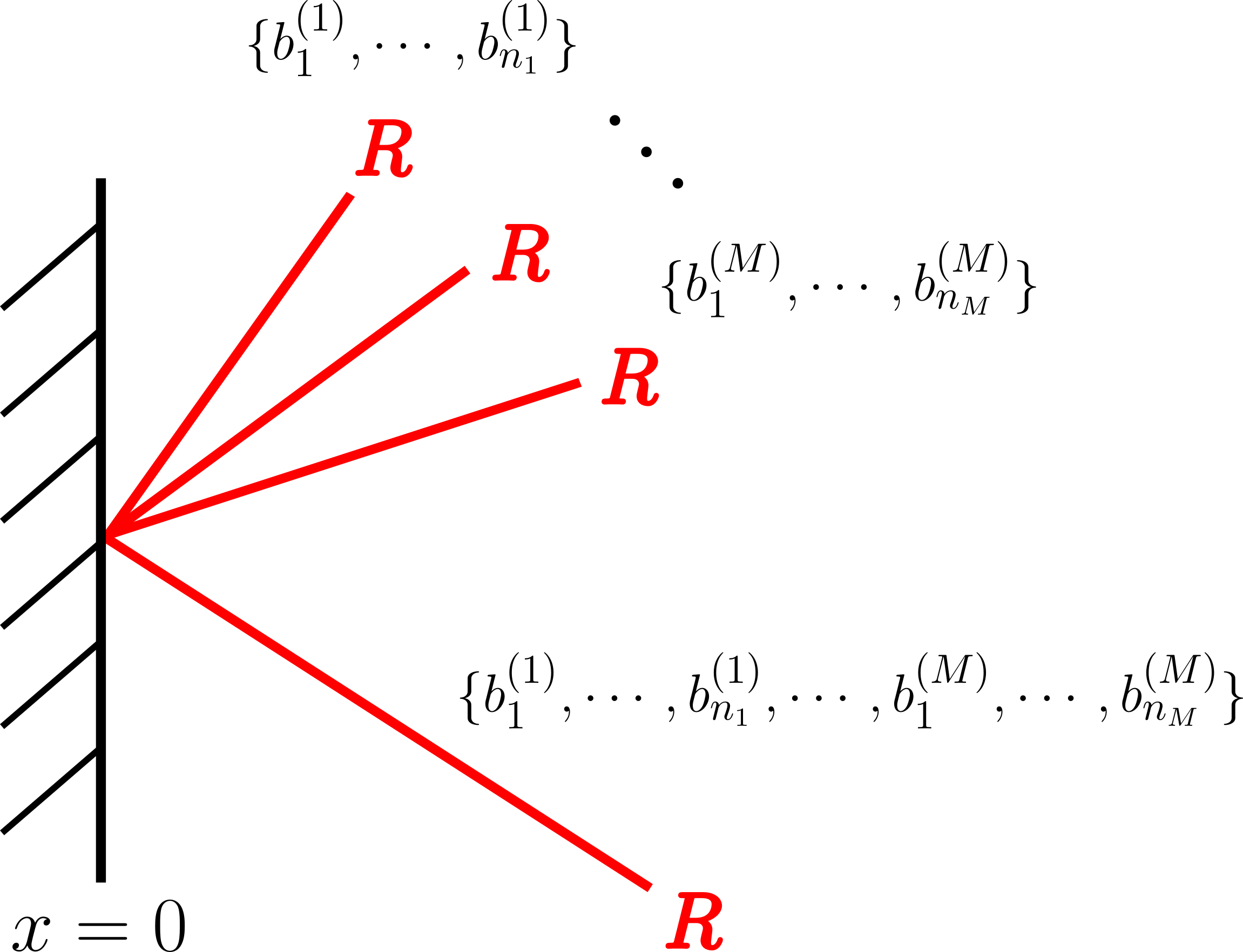}
\caption{$1$-rarefaction hits the boundary.}
\label{Rcase2}
\end{figure}

\begin{figure}[!htbp]
\centering
\includegraphics[width=0.3\textwidth]{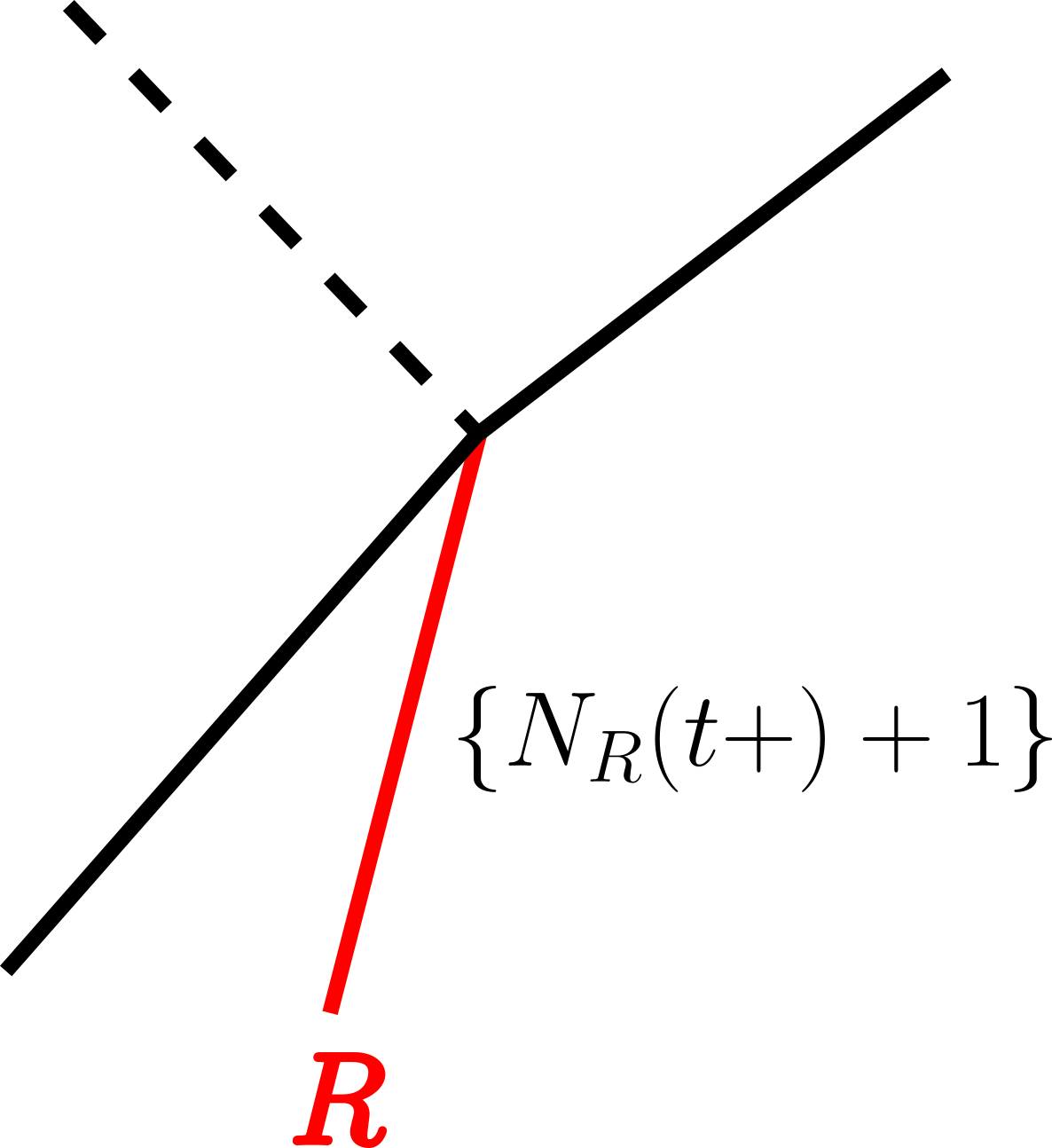}
\caption{None of the outgoing waves is an $i$-rarefaction.}
\label{Rcase3}
\end{figure}
\subsection{Evolution of wave strength}\label{subsec evolution}
By the definitions in Section~\ref{subsec label}, at any time $t$ between two consecutive events, if a wave with associated set $\{b_1,\ldots,b_n\}$ has strength $\s$, then
\begin{align*}
    \s = \sum_{l=1}^n \s_{b_l} (t) \, .
\end{align*}

We now show that the tracked strength of each element is bounded by its initial strength, up to a constant. As a first step, we state a lemma describing the change in tracked strengths at an event.

\begin{lem}\label{lem:iint}
Assume that an event occurs at time $\ts$. Let $b$ be an element belonging both to the set associated with an incoming wave and to the set associated with an outgoing wave. Then $\s_b(\ts+)$ and $\s_b(\ts-)$ have the same sign.

Assume that an interaction occurs. If the two incoming waves belong to the same family and are of different types, then
\begin{equation*}
    |\s_b(\ts +)| \leq |\s_b(\ts -)| \, .
\end{equation*}
Otherwise,
\begin{equation*}
        |\s_b(\ts +) - \s_b(\ts -)| \leq C_0 |\s_b(\ts-)| |\s| \, ,
\end{equation*}
where $C_0>0$ is the constant from Lemma~\ref{lem:Glimm}, and $\s$ denotes the strength of the incoming wave whose associated set does not contain $b$.

If a $1$-wave hits the boundary, then
\begin{align*}
    |\s_b(\ts +)| \leq \lct |\s_b(\ts -)| \, ,
\end{align*}
where $C_b>0$ is defined in \eqref{def of lct}.

\end{lem}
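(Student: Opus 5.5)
The plan is a case analysis over the updating rules of Section~\ref{def of separation interactions}. In every rule the post-event and pre-event tracked strengths satisfy
\[
\s_b(\ts+)=\s_b(\ts-)\cdot\frac{\s'}{S},
\]
where $S$ is the total incoming strength attached to the sets containing $b$ (namely $\s+\widetilde\s$, or $\s$), and $\s'$ is the corresponding outgoing strength. For rarefaction elements, defined backward in time, the rule reads $\s_b(\ts-)=\s_b(\ts+)\,\s/\s'$, which is the same relation. Every estimate therefore reduces to controlling the single ratio $\s'/S$. No new element is ever created on both sides of an event: new elements appear only in the fourth shock case and the third rarefaction case, and these elements lie on one side only. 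So the lemma concerns exactly these proportional updates.

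\textbf{Sign.} I will first check that $\s'$ and $S$ have the same sign. For the merging of two $i$-shocks, both incoming strengths are negative, and the outgoing wave is declared to be an $i$-shock, so $\s'<0$. When one incoming wave is an $i$-shock or $i$-rarefaction and the outgoing wave of the same family is of the same type, the signs agree by construction of the rules. At the boundary, \eqref{def of lct} and the preceding remark say that $\s_1$ and $\s_2$ have the same sign. The one delicate rule is the shock--rarefaction cancellation within a family, where the outgoing $i$-wave is assigned to whichever type survives. I will verify that an element is transferred only when it lies in the surviving type; this forces the ratio to be positive.

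\textbf{Interaction estimates.} If the two incoming waves are of the same family but different types, say an $i$-shock $\s<0$ and an $i$-rarefaction $\widetilde\s>0$, then Lemma~\ref{lem:Glimm} gives $\s'\approx\s+\widetilde\s$. Here $|\s+\widetilde\s|\le\max(|\s|,|\widetilde\s|)$, and the surviving wave has strictly reduced magnitude. The error term $C_0|\s\widetilde\s|(|\s|+|\widetilde\s|)$ must be absorbed to obtain $|\s'|\le|S|$ exactly, with no constant. This is the step I expect to be the main obstacle. I would try first to use the exact structure of the Riemann coordinates: the $i$-th component of $\Phi_i^\nu$ moves by exactly $\sigma$, and in a same-family interaction the $j$-component correction is $O(\sigma^3)$. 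I would then argue that the outgoing $i$-strength equals the $V_i$-jump $\s+\widetilde\s$ up to a third-order cross correction, and that this correction has a favourable sign. If that fails, I would fall back on $|\s'|\le(1+C_0|\s|)|S|$, which fits the same product structure as the other cases.

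In the remaining cases, namely two same-type waves of one family, or waves of different families, Lemma~\ref{lem:Glimm} gives
\[
|\s'-S|\le C_0|\s\widetilde\s|(|\s|+|\widetilde\s|).
\]
When the incoming waves are of different families, $S$ is the strength of the wave containing $b$ and $\s$ is the strength of the other wave. Then $|\s'/S-1|\le C_0|\s|(|S|+|\s|)\le C_0|\s|$ once the total variation is small. When two $i$-shocks merge, $S=\s+\widetilde\s$ has no cancellation, so $|S|\ge|\s|,|\widetilde\s|$. Dividing the error by $|S|$ again gives $|\s'/S-1|\le C|\cdot|$, with the smallness coming from the other wave's strength, after absorbing constants into $C_0$. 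In either case, multiplying by $|\s_b(\ts-)|$ yields the claimed bound $|\s_b(\ts+)-\s_b(\ts-)|\le C_0|\s_b(\ts-)||\s|$. For rarefaction elements I will run the same computation with the roles of $\ts\pm$ handled through the backward definition. This gives the same bound modulo swapping which side appears on the right, and I will absorb that by smallness.

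\textbf{Boundary hits.} At a boundary hit there is exactly one incoming and one outgoing wave, so $\s_b(\ts+)/\s_b(\ts-)=\s_2/\s_1$. By \eqref{def of lct}, $|\s_2|\le\lct|\s_1|$, which gives $|\s_b(\ts+)|\le\lct|\s_b(\ts-)|$ immediately.
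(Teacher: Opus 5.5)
Your framework matches the paper's. Every rule gives $\s_b(\ts+)=\s_b(\ts-)\,\s'/S$, and the sign claim holds because elements are only passed to an outgoing wave of the same type. The shock-merging, different-family and boundary cases follow from Lemma~\ref{lem:Glimm} and \eqref{def of lct} as you say. In fact they follow directly with $C_0$ itself: in the merging case $|\s\widetilde\s|(|\s|+|\widetilde\s|)/(|\s|+|\widetilde\s|)=|\s||\widetilde\s|\le|\s|$, so neither enlarging $C_0$ nor swapping $\ts\pm$ is needed.

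The gap is the same-family, different-type case, which you leave open. Neither proposed route gives the stated inequality:
\begin{itemize}
\item Your fallback $|\s'|\le(1+C_0|\s|)|S|$ is strictly weaker than $|\s_b(\ts+)|\le|\s_b(\ts-)|$.
\item The Riemann-coordinate route asks for a favourable sign of a higher-order correction, which you have no way to control and do not need.
\item Your starting bound $|\s+\widetilde\s|\le\max(|\s|,|\widetilde\s|)$ is too crude even for the fallback. When $|\s|>|\widetilde\s|$ it only bounds $|\s'|$ by roughly $|\s|$, not by a multiple of $|S|=|\widetilde\s|$.
\end{itemize}

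The missing idea is to use the sign information you already established. Let $\widetilde\s$ be the incoming wave whose set contains $b$, and $\s$ the other incoming wave (same family, opposite sign). Let $\s'$ be the outgoing wave containing $b$; by the rules it has the sign of $\widetilde\s$. The first bullet of Lemma~\ref{lem:Glimm} then gives
\[
|\s'|=\sgn(\widetilde\s)\,\s'\le \sgn(\widetilde\s)(\s+\widetilde\s)+C_0|\s\widetilde\s|(|\s|+|\widetilde\s|)=|\widetilde\s|-|\s|\bigl(1-C_0|\widetilde\s|(|\s|+|\widetilde\s|)\bigr)\le|\widetilde\s|,
\]
as soon as the total variation is small enough that $C_0|\widetilde\s|(|\s|+|\widetilde\s|)\le 1$. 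The cancellation gains the full strength $|\s|$ of the other wave, and this dominates the cubic interaction error. Hence $|\s_b(\ts+)|=|\s_b(\ts-)|\,|\s'|/|\widetilde\s|\le|\s_b(\ts-)|$. The inequality $|\s'|\le|\widetilde\s|$ is exactly what the paper's one-line argument for this case relies on.
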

\begin{proof} 
Assume that an interaction occurs. We denote by $\widetilde{\s}$ and $\s'$ the strengths of the incoming and outgoing waves, respectively, whose associated sets contain $b$. We denote by $\s$ the strength of the other incoming wave.

Recall the definition of tracked strength in Section \ref{subsec label} and Lemma \ref{lem:Glimm}.
If the two incoming waves belong to the same family and are of different types, then
\begin{align*}
    |\s_b(\ts+)| - |\s_b(\ts-)| \leq \frac{|\s_b(\ts-)|}{|\widetilde{\s}|} (|\s '| - |\widetilde{\s}|) \leq 0 \, .
\end{align*}
If both incoming waves are shocks in the same family, then 
\begin{align*}
    |\s_b(\ts+) - \s_b(\ts-)| = \frac{|\s_b(\ts-)|}{|\s|+|\widetilde{\s}|} (|\s ' - (\s + \widetilde{\s})|) \leq C_0 |\s_b(\ts-)| |\s|.
\end{align*}
Otherwise, 
\begin{align*}
    |\s_b(\ts+) - \s_b(\ts -)| = \frac{|\s_b(\ts -)|}{|\widetilde{\s}|} |\s' - \widetilde{\s}| \leq C_0 |\s_b(\ts-)| |\s| \, .
\end{align*}
Assume that an incoming $1$-wave of strength $\s$ hits the boundary and generates outgoing waves with total strength $\s'$. Then
\begin{align*}
    |\s_b(\ts+)| = |\frac{\s_b(\ts-)}{\s} \s'| \leq \lct |\s_b(\ts-) | \, .
\end{align*}
\end{proof}

\begin{conv}\label{def of generation time}
For every element introduced in constructing the sets associated with shocks and rarefactions, we define its generation time to be the earliest time at which its tracked strength is defined. For an element $b$, we denote this time by $t_0^b$.
\end{conv}

Although the sets associated with rarefactions are constructed backward in time, the generation time of an element is always understood with respect to the forward time direction. 

We show that the sum of the tracked strengths of all elements at their respective generation times is controlled by the total variation at time $0$.

\begin{lem}\label{lemma total strength of all elements}
    Let $T>0$.
    We denote by $\mathcal{B}$ the set of all elements introduced throughout the evolution in constructing the sets associated with shocks and rarefactions in Section~\ref{subsec label}. 
    We have
    \begin{align*}
        \sum_{b\in\mathcal{B}} \s_b(t_0^b) \leq (1+2C_0) L(0) \, ,
    \end{align*}
    where $t_0^b$ denotes the generation time of $b$, as defined in Convention~\ref{def of generation time}, and $C_0>0$ is the constant from Lemma~\ref{lem:Glimm}
\end{lem}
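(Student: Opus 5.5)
We split the set $\mathcal{B}$ of all elements into three classes according to how they are generated. The first class is the shock elements created at $t=0$. The second is the shock elements created at an event by the fourth case of Section~\ref{def of separation interactions}, i.e.\ an outgoing $i$-shock with no incoming $i$-shock. The third is the rarefaction elements, each of which is introduced (backward in time) either at $T+$ or at an event where an incoming $i$-rarefaction produces no outgoing $i$-rarefaction. Throughout we write $|\s_b|$, since the relevant quantities are strengths.

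\emph{Shock elements at $t=0$.} Their generation strengths sum to the total strength of shocks at time $0$, which is at most $L(0)$.

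\emph{Shock elements born in the fourth case.} There the outgoing $i$-shock comes from an interaction of two waves of the other family. By the first bullet of Lemma~\ref{lem:Glimm}, its strength is at most $C_0|\s\widetilde\s|(|\s|+|\widetilde\s|)\le C_0|\s\widetilde\s|$, using smallness. Each interaction decreases $Q$ by at least a fixed fraction of $|\s\widetilde\s|$, by the Glimm/Amadori--Colombo analysis behind \eqref{nonincreasing V+Q interaction}. Hence the total of these new strengths is bounded by $C_0\sum|\s\widetilde\s|\le C_0 Q(0)\le C_0 L(0)$, since $Q(0)\le L(0)^2\le L(0)$.

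\emph{Rarefaction elements.} For these the generation time is the \emph{forward} first time the element exists, so we must go back to the rarefaction's creation. I would trace each rarefaction element backward along the chain of rarefactions containing it until the origin of that chain. Every maximal backward chain of rarefaction elements terminates either at $t=0$ or at an event whose outgoing rarefaction has no incoming rarefaction of the same family. The latter can only be an interaction of other-family waves, or an interaction where an $i$-shock and $i$-rarefaction meet, whose output is dominated by the incoming strengths. The boundary case is excluded, since a boundary-hitting $1$-rarefaction is continued as an element, not newly created. By construction the tracked strengths of the elements in a rarefaction's set sum to that rarefaction's strength at every time. Therefore the generation strengths of elements whose chains start at $t=0$ sum to the total rarefaction strength at $t=0$, at most $L(0)$ minus the shock part. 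Elements whose chains start at a newly created rarefaction of the other-family type sum to that new wave's strength, which is again at most $C_0|\s\widetilde\s|$. Combining these gives a bound $L(0)+C_0Q(0)+C_0Q(0)\le (1+2C_0)L(0)$.

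\emph{Main obstacle.} The delicate point is making this backward accounting rigorous. One must check that the proportional splitting rule, applied backward, makes the generation-time strengths of elements sharing a common ancestor sum exactly to the ancestor's strength at its creation. One must also check that no element's generation is double-counted, in particular through a boundary reflection, where a $1$-rarefaction turns into $2$-rarefactions within the same element set. I would handle this by induction over events in reverse order, verifying at each event that $\sum$ of the tracked strengths over a wave's set equals its strength.
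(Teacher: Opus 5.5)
\textbf{Gap: telescoping $Q$ alone.} Your overall plan is the paper's. Elements generated at time $0$ sum to at most $L(0)$, because the tracked strengths over a wave's set add up to its strength. Elements generated at positive times occur only in the other-family outgoing wave of an interaction of two same-family waves, so each such interaction contributes at most $C_0|\s\widetilde\s|(|\s|+|\widetilde\s|)$. The gap is the step $C_0\sum|\s\widetilde\s|\le C_0Q(0)$. It telescopes $Q$ by itself, which requires $Q$ to be nonincreasing, and on the half-line it is not. At a boundary hit, the reflected $2$-wave sits at $x=0$. It forms an approaching pair with every $1$-wave to its right and with every $2$-shock to its right. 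The vanishing $1$-wave only removes pairs with $1$-waves in which a shock is involved. So $Q$ can jump up.

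A concrete instance: data consisting of a $1$-shock to the left of a $2$-shock have $Q(0)=0$. After the $1$-shock reflects, the two $2$-shocks approach and, for $T$ large, merge. The merger generically emits a new $1$-wave, so the positively generated elements have positive total strength while your bound gives $0$.

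\textbf{Fix.} Only $L+\kappa Q$ is monotone, by \eqref{nonincreasing V+Q}. The boundary-hit estimate there compensates the jump of $Q$ by the loss $-K_1\sum|\s_1|$ in $L$. The repair is the paper's argument:
\[
\sum_{\text{interactions}}|\Delta Q|\le-\sum_{\text{interactions}}\Delta(L+\kappa Q)\le-\sum_{\text{all events}}\Delta(L+\kappa Q)\le L(0)+\kappa Q(0).
\]
The first inequality is \eqref{nonincreasing V+Q interaction}. The second holds because $\Delta(L+\kappa Q)\le0$ at boundary hits too. Then conclude with $\kappa Q(0)\le\kappa L(0)^2\le L(0)$ for small data.

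Keep in mind that each same-family interaction emits a single other-family wave, which is either a shock or a rarefaction. So one sum covers both your shock-born and rarefaction-born classes. Bounding the two classes separately by the corrected quantity would give $1+4C_0$ instead of $1+2C_0$.

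\textbf{Misidentified chain origin.} In your rarefaction accounting you list ``an interaction where an $i$-shock and $i$-rarefaction meet'' as a possible start of a backward chain. It never is. If the outgoing $i$-wave there is a rarefaction, the first rarefaction rule assigns its elements to the incoming $i$-rarefaction, so the chain continues backward. Only the outgoing $j$-wave ($j\neq i$) of such an interaction can start a chain, and that is already your other-family case. This is not cosmetic. A chain genuinely starting there would carry strength comparable to the incoming rarefaction rather than $O(|\s\widetilde\s|)$, and your final sum omits it. Delete that alternative so that the classification matches the construction.
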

\begin{proof}
We partition $\mathcal{B}$ into the set $\mathcal{B}_0$ of elements generated at time $0$ and the set $\mathcal{B}_1$ of elements with positive generation time. Then
\begin{align*}
    \sum_{b\in \mathcal{B}_0} |\s_{b}(t_0^{b})| = \sum_{b\in \mathcal{B}_0} |\s_{b}(0)| \leq L(0) \, .
\end{align*}
Every element in $\mathcal{B}_1$ is generated when two waves of the same family interact, and it belongs to the set associated with an outgoing wave of the other family. Hence, Lemma~\ref{lem:Glimm}, \eqref{nonincreasing V+Q}, and \eqref{nonincreasing V+Q interaction} imply that
\begin{align*}
    \sum_{b\in \mathcal{B}_1}|\s_{b}(t_0^{b})|
    &\leq C_0 \sum_{t>0} |\Delta Q(t)| 
    \\&\leq C_0 \sum_{t>0}|\del L(t) + \kappa \del Q(t)| 
    \\&= -C_0\sum_{t>0}\big(\del L(t) + \kappa \del Q(t)\big)
    \\&\leq C_0(L(0)+\kappa Q(0)) \, .
\end{align*}
\end{proof}

We are now ready to prove that the tracked strength of each element is bounded by its strength at the time of generation. 

\begin{lem}\label{lem:ithbd} 
    For any $U_b\in \Omega_{sub}^{in/out}$, there exists $\varepsilon_0>0$ small enough such that the following is true. 

    \noindent
    Let $V$ be an unshifted $\nu$-approximate solution constructed in Section \ref{sec:FTA}, without multiple-wave events and without two events occurring simultaneously, and with initial condition satisfying
    \begin{align*}
        \|V(\cdot,0)-U_b\|_{L^\infty(\mathbb{R}_+)}\leq \varepsilon_0 \, , \; \|V(\cdot,0)\|_{BV(\mathbb{R}_+)} \leq \varepsilon_0 \, .
    \end{align*}
    Let $T>0$.
    We assign to each wave a set and introduce elements as in Section~\ref{subsec label}. For each element $b$ and every time $t$ from its generation until it disappears, or until time $T$ if it persists up to time $T$, we have
    \begin{align*}
        |\s_b(t)| \leq 2\lct|\s_b(t_0^b)| \, ,
    \end{align*}
    where $t_0^b$ denotes the generation time of $b$, as defined in Convention~\ref{def of generation time}, and $\lct>1$ is defined in \eqref{def of lct}.
\end{lem}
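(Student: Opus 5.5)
We follow a single element $b$ from its generation time $t_0^b$ forward and multiply together the factors by which $|\s_b|$ changes at each event it passes through. By Lemma~\ref{lem:iint} there are three kinds of event.
\begin{itemize}
\item Same-family interactions of different types: the factor is at most $1$.
\item Boundary hittings: the factor is at most $\lct$.
\item All other interactions: the factor is at most $1+C_0|\s|$, where $\s$ is the strength of the other incoming wave, whose set does not contain $b$.
\end{itemize}
By the Remark preceding Lemma~\ref{lemma property of elements}, an element can pass through at most one boundary hitting: after it becomes attached to a $2$-wave it only travels to the right. Hence
\begin{align*}
|\s_b(t)| \le \lct\,|\s_b(t_0^b)| \prod_{\ts}\bigl(1+C_0|\s(\ts)|\bigr)
\le \lct\,|\s_b(t_0^b)|\exp\Bigl(C_0\sum_{\ts}|\s(\ts)|\Bigr),
\end{align*}
where the product and sum run over the interactions in $(t_0^b,t]$ involving $b$. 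It therefore suffices to show $C_0\sum_{\ts}|\s(\ts)|\le \log 2$ once $\varepsilon_0$ is small.

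To bound $\sum_{\ts}|\s(\ts)|$, we decompose each partner wave $\s(\ts)$ into its elements $b'$, so that $|\s(\ts)|\le\sum_{b'}|\s_{b'}(\ts)|$. The sum becomes a sum over pairs $(b,b')$ of interacting elements. Lemma~\ref{lemma property of elements} says two elements interact at most twice, so each $b'$ appears at most twice. This gives
\begin{align*}
\sum_{\ts}|\s(\ts)|\le 2\sum_{b'\in\mathcal B}\sup_t|\s_{b'}(t)|.
\end{align*}
This is circular, because it uses the very bound we are proving. We resolve it with a bootstrap (continuity) argument over the finitely many events on $[0,T]$, ordered in time.

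The induction hypothesis is that $|\s_{b'}(t)|\le 2\lct|\s_{b'}(t_0^{b'})|$ holds for every element $b'$ and every $t<\ts$. Under this hypothesis, Lemma~\ref{lemma total strength of all elements} gives
\begin{align*}
\sum_{\ts}|\s(\ts)|\le 4\lct(1+2C_0)L(0)\lesssim \lct\,\varepsilon_0 .
\end{align*}
We then choose $\varepsilon_0$ small enough that
\begin{align*}
\exp\bigl(4C_0\lct(1+2C_0)L(0)\bigr)\le 2 .
\end{align*}
With this choice the bound extends to $|\s_b(\ts+)|\le 2\lct|\s_b(t_0^b)|$ just after the event, which closes the induction.

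The rarefaction elements need a small adaptation. Their tracked strengths are defined backward in time, but Lemma~\ref{lem:iint} is phrased symmetrically, comparing the values at $\ts+$ and $\ts-$. For a rarefaction element, the backward definition $\s_b(\ts-)=\frac{\s_b(\ts+)}{\s'}\s$ gives the ratio $|\s_b(\ts+)|/|\s_b(\ts-)|=|\s'|/|\s|$. This is the same forward ratio that Lemma~\ref{lem:Glimm} controls, so the multiplicative chain still runs forward from $t_0^b$.

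The main obstacle is making the bootstrap rigorous. The partner elements $b'$ have their own sup-bounds only at earlier times, and for rarefactions the "earlier" direction has to be interpreted with care. The point to verify is that in the sum above only the values $|\s_{b'}(\ts-)|$ at event times up to the current event enter, so that ordering events by time makes the induction well-founded. For elements generated after time $0$, their strengths at generation are of quadratic order. This is already accounted for in Lemma~\ref{lemma total strength of all elements}, so it poses no further difficulty.
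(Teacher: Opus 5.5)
Your proposal is correct and follows essentially the same route as the paper. Both arguments use a multiplicative chain from Lemma~\ref{lem:iint}, with at most one boundary factor $C_b$, and an exponential bound on the product. Both control the partner strengths by combining the ``interact at most twice'' property of Lemma~\ref{lemma property of elements} with Lemma~\ref{lemma total strength of all elements}, and both close with a bootstrap. The only difference is how the bootstrap is phrased: the paper uses a supremum $T_*$ and a contradiction, closing with the strictly smaller constant $\frac{25}{16}C_b$, while you run an induction over the finitely many time-ordered events. Your version does not need a strict improvement of the constant and is, if anything, slightly cleaner.
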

\begin{proof}
Let $T_*\leq T$ be the supremum of all $t\in[0,T]$ such that, for every element $b$ with $t_0^b<t$,
\begin{align}\label{def of T*}
    |\s_b(s+)| \leq 2\lct |\s_b(t_0^b)| \, ,
\end{align}
for every $s\in[t_0^b,T_*)$. In the proof, we set $\s_b=0$ once the element $b$ disappears.

We fix an element $b$ and a time $t_0^b \leq s \leq T_*$. 
Assume first that, at its generation time, $b$ belongs to a set associated with a $2$-wave. 
Let
\begin{align*}
    t_0^b < t^1 < \cdots < t^l \leq s \, ,
\end{align*}
be the interaction times at which the set of one of the incoming waves contains $b$. For each $t_k$, we denote by $\s_k$ the strength of the other incoming wave.

By Lemma \ref{lem:iint}, we get
\begin{equation}\label{est:stpm}
\begin{aligned}
    |\s_b(s+)| &= |\s_b(t^l+)| \\
    & \leq |\s_b(t^{l-1}+)| (1 + C_0|\s_{l}|) \\
    & \leq |\s_b(t^{l-2}+)| (1 + C_0|\s_{l-1}|) (1 + C_0|\s_{l}|) \\
    &\leq ... \\
    &\leq |\s_b(t_0^b)| \prod_{m=1}^l (1 + C_0|\s_m|)  \, .
\end{aligned}
\end{equation}
Since $1+c\leq e^c$ for any $c\in \mathbb{R}$, we have
\begin{align*}
    |\s_b(s+)| &\leq |\s_b(t_0^b)| \, \textup{exp}\, ( C_0\sum_{m=1}^l |\s_m|) \, .
\end{align*}
Together with Lemma \ref{lemma property of elements}, \eqref{def of T*}, and Lemma \ref{lemma total strength of all elements}, we obtain
\begin{align*}
    \sum_{m=1}^l |\s_m| \leq 4 \lct \sum_{b'\in \mathcal{B}} |\s_{b'}(t_0^{b'})|
    \leq 4 \lct (1+2C_0) \varepsilon_0\, .
\end{align*}
Since $e^c\leq 1+2c$ for any $0\leq c\leq 1$, we can take $0<\varepsilon_0\ll 1$ so that
\begin{align}\label{estimate of track wave strength 1}
    |\s_b(s+)| \leq |\s_b(t_0^b)| \, (1+8 C_0\lct(1+2C_0) \varepsilon_0) \leq \frac{5}{4} |\s_b(t_0^b)| \, .
\end{align}

Assume now that, at its generation time, $b$ belongs to a set associated with a $1$-wave. If $b$ is not contained in a set associated with a $2$-wave at any time in $[0,T_*]$, then the preceding argument yields the same estimate. Otherwise, let $t_*$ be the time at which the $1$-wave whose associated set contains $b$ hits the boundary.

Then, together with Lemma~\ref{lem:iint}, the preceding argument yields the following estimate for every $t_*\leq s \leq T_*$,
\begin{align}\label{estimate of track wave strength 2}
    |\s_b(s+)|\leq \frac{5}{4}|\s_b(t_*+)|\leq \frac{5}{4}\lct|\s_b(t_*-)|\leq \frac{25}{16}\lct |\s_b(t_0^b)| \, .
\end{align}
By \eqref{estimate of track wave strength 1} and \eqref{estimate of track wave strength 2}, we get that for any for every $b$ with $t_0^b\leq T_*$,
\begin{align*}
    |\s_b(s+)| \leq \frac{25}{16} \lct |\s_b(t_0^b)| \, ,
\end{align*}
for every $t_0^b\leq s\leq T_*$. This contradicts the maximality of $T_*$. Hence, $T_*=T$.
\end{proof}

\subsection{Convergence of unshifted $\nu$-approximate solutions}\label{subsec limit of approximate solutions}
Finally, with the help of Lemma \ref{lem:ithbd}, we prove that any limit (up to a subsequence) of unshifted $\nu$-approximate solutions satisfies the boundary condition.

\begin{prop}\label{lem:BC}
For any $U_b \in \Omega_{sub}^{in/out}$, there exists $\varepsilon_0>0$ small enough such that the following is true. 

\noindent
Let $T>0$ be fixed, and let $V_\nu$ be a sequence of unshifted $\nu$-approximate solutions constructed in Section \ref{sec:FTA}, without multiple-wave events and without two events occurring simultaneously, and with initial condition satisfying
\begin{align*}
    \|V_\nu(\cdot,0)-U_b\|_{L^\infty(\mathbb{R}_+)}\leq \varepsilon_0 \, , \; \|V_\nu(\cdot,0)\|_{BV(\mathbb{R}_+)} \leq \varepsilon_0 \, .
\end{align*}
Then for any $R>0$, $V_\nu$ is compactly embedded in $C_x([0,R];L_t^1([0,T]))$. In particular, there exists $V\in C_x([0,R];L_t^1([0,T]))$ such that
\begin{align}\label{lem: BC eq}
    V_\nu \rightarrow V \textup{ up to a subseqence in } C_x([0,R];L_t^1([0,T])) \, .
\end{align}
Moreover, $V$ satisfies the inflow/outflow boundary condition (\ref{small BV BC}).
\end{prop}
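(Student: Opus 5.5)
The plan is to view $V_\nu$ as a family of maps $x\mapsto V_\nu(x,\cdot)\in L^1_t([0,T])$ and apply an Arzelà--Ascoli type argument in $C_x([0,R];L^1_t([0,T]))$. Two ingredients are needed. First, a uniform bound, for each fixed $x_*\in[0,R]$, on the total variation in time of $t\mapsto V_\nu(x_*,t)$ on $[0,T]$, which gives compactness of $\{V_\nu(x_*,\cdot)\}_\nu$ in $L^1_t$ by Helly's theorem; the uniform $L^\infty$ bound comes from the smallness of $L+\kappa Q$ in \eqref{nonincreasing V+Q}. Second, uniform equicontinuity of $x\mapsto V_\nu(x,\cdot)$ into $L^1_t$.

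For the time-variation bound, fix $x_*$ and note that $V_\nu(x_*,\cdot)$ is piecewise constant in $t$. It jumps exactly when a front crosses $x=x_*$, and the jump size is comparable to the front strength. Each front strength equals the sum of the tracked strengths of the elements in its associated set. Hence
\[
\mathrm{TV}_{[0,T]}V_\nu(x_*,\cdot)\le C\sum_{\text{crossings}}\sum_{b}|\s_b(t)|.
\]
By Lemma~\ref{lemma property of elements}, each element crosses $x=x_*$ at most twice, and by Lemma~\ref{lem:ithbd}, $|\s_b(t)|\le 2\lct|\s_b(t_0^b)|$. Summing and using Lemma~\ref{lemma total strength of all elements}, I expect
\[
\mathrm{TV}_{[0,T]}V_\nu(x_*,\cdot)\le 4C\lct(1+2C_0)L(0)\le C'\varepsilon_0,
\]
uniformly in $\nu$ and $x_*$.

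For the $x$-equicontinuity, use \eqref{uniform bound on speed of the wave}: every front has speed with $|\lambda|\ge c>0$. Fronts are therefore transversal to vertical lines, and the strip between $x=x_1$ and $x=x_2$ is traversed by each front in time at most $|x_1-x_2|/c$. This should give
\[
\|V_\nu(x_1,\cdot)-V_\nu(x_2,\cdot)\|_{L^1_t}\le \frac{|x_1-x_2|}{c}\sup_{x}\mathrm{TV}_tV_\nu(x,\cdot).
\]
Equivalently, this follows from $\partial_x V_\nu=-\lambda^{-1}\partial_tV_\nu$ front-by-front, where $\partial_t V_\nu(x,\cdot)$ is a measure with mass bounded uniformly in $x$ and $\nu$. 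I would justify this by writing the $L^1_t$ difference as a sum over fronts crossing the strip, each contributing at most its strength times its crossing time. With both ingredients, Arzelà--Ascoli (vector-valued version) yields a subsequence converging in $C_x([0,R];L^1_t([0,T]))$ to some $V$.

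For the boundary condition, each $V_\nu$ satisfies \eqref{small BV BC} exactly at $x=0$: in the outflow case $u_\nu(0,t)=u_b$, and in the inflow case $\rho_\nu u_\nu(0,t)=\rho_bu_b$, for a.e.\ $t$, by construction of the boundary Riemann solver. Convergence in $C_x(L^1_t)$ includes $x=0$, so $V_\nu(0,\cdot)\to V(0,\cdot)$ in $L^1_t$, hence a.e.\ along a further subsequence. Since the maps $U\mapsto u$ and $U\mapsto\rho u$ are continuous on the bounded neighborhood of $U_b$ (which stays away from vacuum), the identities pass to the limit. One should also check that $V(0,\cdot)$ agrees with the strong trace of the limit, which follows from the continuity in $x$.

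The main obstacle is the rigorous accounting in the variation bound. A jump of $V_\nu(x_*,\cdot)$ at a crossing must be bounded by the tracked strengths of elements physically crossing at that moment. Elements in rarefaction sets are defined backward in time, so one must check that every front crossing $x_*$ carries a set whose tracked strengths sum exactly to its strength, including fronts created at $x_*$ by an interaction. Finally, Lemma~\ref{lemma property of elements} must be applied to elements, not fronts, so that splitting and merging do not cause double counting.
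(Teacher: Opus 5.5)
Your proposal is correct and follows the paper's proof. Both arguments bound $\|V_\nu(x_*,\cdot)\|_{BV([0,T])}$ and $\|\partial_x V_\nu(x_*,\cdot)\|_{\mathcal{M}([0,T])}$ uniformly by summing tracked strengths over elements: each element crosses $x=x_*$ at most twice, $|\s_b|\le 2\lct|\s_b(t_0^b)|$, and $|\lambda|\ge c$. Both then pass the boundary identity to the limit at $x=0$ using $\rho_b>0$; the only cosmetic difference is that the paper cites an Aubin--Lions type lemma where you spell out Helly plus Arzel\`a--Ascoli, and your strip estimate is exactly the paper's $\partial_x V_\nu\in L^\infty_x\mathcal{M}_t$ bound.
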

\begin{proof}
To show (\ref{lem: BC eq}), Aubin--Lions lemma implies that it suffices to show
\begin{align}\label{aubin-lion}
    V_\nu \in L_x^\infty([0,R]; BV_t([0,T])) \, , \; \partial_x V_\nu \in L_x^\infty([0,R];\mathcal{M}_t([0,T])) \, ,
\end{align}
where $\mathcal{M}$ denotes the space of finite signed Radon measures.

Let us fix $x_*\in (0,R]$ such that $V_\nu$ has no interaction along the line $x=x_*$. A direct computation yields
\[
\begin{aligned}
    &\|V_\nu(\cdot, x)\|_{BV([0,T])} \leq C\sum |\sigma_\b(t^\beta)|, \\
    &\|\partial_x V_\nu(\cdot, x)\|_{\mathcal{M}([0,T])} \leq C\sum \frac{1}{|\lambda_\beta(t^\beta)|}|\sigma_\beta(t^\beta)|,
\end{aligned}
\]
where the sums are taken over all pairs $(\beta,t^\beta)$ such that the wave front $\beta$ crosses the line $x=x_*$ at time $t^\beta \in [0,T]$, and $\sigma_\beta(t^{\b})$ and $\lambda_\b(t^\b)$ denote, respectively, the total strength and speed of the corresponding wave front at the crossing time. Then, Lemma \ref{lemma property of elements}, Lemma \ref{lem:ithbd}, \eqref{uniform bound on speed of the wave}, and Lemma \ref{lemma total strength of all elements} imply that
\begin{align*}
    \|V_\nu(\cdot, x)\|_{BV([0,T])} \leq 4CC_b \sum_{b \in \mathcal{B}} |\s_b(t_0^b)| \leq C C_b (1+2C_0)\varepsilon_0 \, , \\
    \|\partial_x V_\nu(\cdot, x)\|_{\mathcal{M}([0,T])} \leq \frac{4CC_b}{c} \sum_{b \in \mathcal{B}} |\s_b(t_0^b)| \leq 
    \frac{C C_b (1+2C_0)}{c}\varepsilon_0 \, .
\end{align*}
Hence, we get (\ref{aubin-lion}) and thus (\ref{lem: BC eq}). Notice that $\rho_b > 0$ and the total variation is always bounded by $C\varepsilon_0$. Hence, (\ref{lem: BC eq}) implies that
\begin{align*}
    u_\nu \rightarrow u \textup{ up to a subsequence in }C_x([0,R];L_t^1([0,T])) \, ,
\end{align*}
where we write $V_\nu = (\rho_\nu, \rho_\nu u_\nu)$ and $V=(\rho, \rho u)$.
Since $V_\nu$ satisfies (\ref{small BV BC}) for any $\nu$, $V$ satisfies (\ref{small BV BC}).
\end{proof}

\section{Proof of main theorem}\label{sec: proof of them}
In this section, we introduce the definition of the weight function $a$, present the technical lemma that allows us to stop and restart the clock whenever an interaction or a boundary hit occurs, and complete the proof of Theorem \ref{thm}. The overall structure follows Sections 5 and 6 of \cite{CKF25} closely. We highlight the differences arising from the presence of the boundary and employ the techniques developed in the previous section to complete the proof.

\subsection{Construction of the weight function}
In this subsection, we define the weight function $a$, which is piecewise constant with discontinuities along every shock curve, in such a way that the assumption (\ref{prop: s weight}) of Proposition \ref{Prop:S} holds and that $a$ remains nonincreasing in time. Before defining the weight $a$ precisely, we recall an interaction estimate 
for multiple-wave interactions established in \cite{BC95}.
\begin{prop}
    Let $\psi$ be a shifted $\nu$-approximate solution. There exists a constant $K_0$ such that the following is true.

    Let there be an interaction at $(x,t)$ between incoming waves from the second family with strengths $\sigma_i''$’s and incoming waves from the first family with strengths $\sigma_i'$’s. We denote the total strength of the outgoing wave in the first family by $\sigma_1$ and in the second family by $\sigma_2$. Then
    \begin{align*}
        |\s_2-\sum \s_i ''| + |\s_1-\sum \s_i '| \leq K_0 |\Delta \tilde{Q}| \, ,
    \end{align*}
    where $\tilde{Q}$ denotes the unweighted potential, i.e.,
    \begin{align}\label{def of unweighted potential}
        \tilde{Q} = \sum_{\textup{approaching waves}} |\s_i| |\s_j| \, .
    \end{align}
\end{prop}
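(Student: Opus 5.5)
The plan is to reduce a multiple-wave interaction to a finite sequence of binary interactions and then sum the binary estimates of Lemma~\ref{lem:Glimm}. Since all waves meet at the single point $(x,t)$, we perturb the incoming fronts infinitesimally so that they interact pairwise. This is legitimate because the outgoing Riemann solution at $(x,t)$ depends only on the far left and far right states $V^l$, $V^r$. By the uniqueness part of the interpolated Riemann solver (Lemma~1 of \cite{BC95}), the outgoing $1$- and $2$-waves are exactly those obtained by solving the single Riemann problem $(V^l,V^r)$. We therefore compare that solution with the sum of the incoming strengths directly, using only smooth dependence of the wave curves $\Phi_i^\nu$ on their parameters. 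This avoids tracking an actual sequence of interactions.

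First I will handle the standard Glimm-type interaction estimate at the level of wave curves. Write
\[
V^r = \Phi_{2}^\nu(\cdots\Phi_2^\nu(\Phi_1^\nu(\cdots\Phi_1^\nu(V^l,\sigma_1')\cdots),\sigma_k')\cdots,\sigma_m'') .
\]
The incoming waves are ordered from left to right, and at an interaction point every $2$-wave lies to the left of every $1$-wave, since $2$-waves approach from the left and $1$-waves from the right. Define the map
\[
(\alpha_1,\ldots,\alpha_p)\mapsto \Psi(V^l;\alpha_1,\ldots,\alpha_p),
\]
the composition of the interpolated curves in the given order. We then compare it with $\Phi_2^\nu(\Phi_1^\nu(V^l,\sum\sigma_i'),\sum\sigma_i'')$. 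The difference vanishes whenever all strengths but one are zero. It also vanishes, up to the interpolation error, when all waves of a given family are of the same type, because rarefaction curves in Riemann coordinates are straight coordinate lines and shock curves are $C^2$ with third-order tangency.

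A Taylor expansion (the classical argument of \cite{BC95}, Lemma~1, or \cite[Lemma 7.2]{Bressan}-type reasoning) then gives a bound of the form
\[
|\sigma_2-\textstyle\sum\sigma_i''|+|\sigma_1-\sum\sigma_i'| \le C\sum_{\text{approaching }(i,j)}|\sigma_i||\sigma_j| .
\]
Because the Riemann solver is $C^{1,1}$-invertible near $V_b$, the outgoing strengths depend Lipschitz-continuously on $V^r$. The sum on the right runs over pairs of incoming waves that are approaching: pairs of a $2$-wave and a $1$-wave, and pairs of the same family in which at least one is a shock. For same-family pairs of two rarefactions the term is absent, since rarefaction composition is exactly additive in Riemann coordinates.

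Next, relate this sum to $|\Delta\tilde Q|$. Before the interaction, all these pairs are counted in $\tilde Q$. After the interaction, the outgoing $1$-waves are to the left of the outgoing $2$-waves, so they no longer approach each other. New approaching pairs can form only between outgoing waves and other fronts in the solution, with total contribution bounded by $(|\sigma_1|+|\sigma_2|)\,\tilde L$. Pairs formerly counted with the incoming waves contribute $(\sum|\sigma_i'|+\sum|\sigma_i''|)\tilde L$. By the estimate above, the net change in these cross terms is at most $C\tilde L\cdot(\text{interaction term})$, where $\tilde L$ is the unweighted total variation. Hence
\[
-\Delta\tilde Q \ge (1-C\tilde L)\sum_{\text{approaching}}|\sigma_i||\sigma_j|\ge \tfrac12\sum_{\text{approaching}}|\sigma_i||\sigma_j| ,
\]
provided $\tilde L\le C\varepsilon_0$ is small, which holds by \eqref{nonincreasing V+Q}. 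Taking $K_0=2C$ concludes.

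The main obstacle is the shifted setting. In $\psi_\nu$ shocks move at the speeds $\dot h$ of Proposition~\ref{Prop:S} rather than at Rankine--Hugoniot speeds, so one must check that the wave ordering at an interaction point still gives the approaching structure used above. I would argue that only the ordering of the states matters, not the speeds, and the ordering is preserved because of \eqref{uniform bound on speed of the wave} and the fact that $\psi_\nu$ has the same wave configuration as the unshifted solution (as in the proof of Lemma~\ref{lem: L1 difference}). A secondary difficulty is that two same-family shocks meeting in $\psi_\nu$ with $\dot h$ speeds still contribute a $\tilde Q$ term, which is consistent with our definition.
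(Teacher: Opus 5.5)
The paper does not prove this proposition; it quotes it from \cite{BC95}. Your route is the classical argument behind that citation. Bound $E=(E_1,E_2):=(\s_1-\sum_j\s_j',\,\s_2-\sum_j\s_j'')$ by $C\mathcal I$, where $\mathcal I$ is the sum of $|\s_a\s_b|$ over approaching pairs of incoming fronts, and then show $-\Delta\tilde Q\ge\frac12\mathcal I$ for small $\tilde L$. You are right that the shifted speeds play no role. Approaching in $\tilde Q$ is a combinatorial notion, the outgoing states come from the same interpolated Riemann solver, and \eqref{uniform bound on speed of the wave} forces incoming $2$-fronts to lie left of incoming $1$-fronts. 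Your appeal to $\psi_\nu$ having the same wave configuration as the unshifted solution is not what the proof of Lemma~\ref{lem: L1 difference} gives (that concerns a locally restarted $V_*$), and you do not need it. Two steps, however, are wrong as written.

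\textbf{Vanishing property.}
\begin{itemize}
\item Your nested formula applies the $1$-curves first, which is the outgoing order. The incoming state is $V^r=\Phi_1^\nu(\cdots\Phi_1^\nu(\Phi_2^\nu(\cdots\Phi_2^\nu(V^l,\s_1'')\cdots),\s_1')\cdots)$.
\item More importantly, $E$ does not vanish when all waves of a family are shocks, because shock curves are not additive. The correct statement is that $E=0$ whenever the nonzero strengths contain no approaching pair: a single front, or only rarefactions of one family. In the latter case $\Phi_i^\nu=R_i$ and composition is exact.
\item There is no separate ``interpolation error.'' An additive error of that kind could not be absorbed into $K_0|\Delta\tilde Q|$, so do not introduce one.
\item To get from the vanishing property to $|E|\le C\mathcal I$, telescope. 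Add the incoming fronts $\omega_1,\dots,\omega_p$ one at a time in spatial order. The $k$-th increment vanishes when $\omega_k=0$, and also when every earlier front approaching $\omega_k$ is set to zero, since the survivors are then rarefactions of the family of $\omega_k$. A mixed second-difference estimate then bounds the increment by $C|\omega_k|\sum_{j<k,\ (j,k)\ \mathrm{approaching}}|\omega_j|$.
\item You must also check that $C$ is independent of $\nu$. Although $\partial_\s\phi(\s/\sqrt\nu)$ is of order $\nu^{-1/2}$, $\phi$ multiplies $\hat S_i\s^3$ and $\phi'$ is supported in $\sqrt\nu\le|\s|\le 2\sqrt\nu$. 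Hence $\Phi_i^\nu$ has $C^2$ bounds uniform in $\nu$.
\end{itemize}

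\textbf{Bookkeeping for $\Delta\tilde Q$.} You cannot subtract the bound $(\sum|\s_j'|+\sum|\s_j''|)\tilde L$ for the old cross terms from the bound $(|\s_1|+|\s_2|)\tilde L$ for the new ones. The former is only an upper bound, and approaching status changes across the interaction: an $i$-rarefaction elsewhere approaches an outgoing $i$-shock but not the incoming $i$-rarefactions. You have to compare front by front. For each non-interacting front $\omega$, show that the total strength of outgoing fronts approaching $\omega$ is at most that of incoming fronts approaching $\omega$, plus $|E_1|+|E_2|$. The only nontrivial case is the one just described. For $i=1$, say, if $\s_1<0$ then $|\s_1|=-\sum_j\s_j'-E_1\le\sum_{\s_j'<0}|\s_j'|+|E_1|$, because incoming rarefactions have positive strength. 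Summing over $\omega$ gives $\Delta\tilde Q\le-\mathcal I+\tilde L\,(|E_1|+|E_2|)\le-(1-C\tilde L)\,\mathcal I$, which is the inequality you need.
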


We define the weight function $a$ associated with a shifted $\nu$-approximate solution $\psi$ as follows
\begin{align}\label{def of a}
    a(x,t) := \textup{exp}\Big(\frac{3C_1}{4}\big(L(t)+ \frac{3\kappa}{2} Q(t) + \mu(x,t) \big)\Big)\, ,
\end{align}
where $C_1>0$ is the constant defined in Proposition \ref{Prop:S}, and 
\begin{align*}
    \mu(x,t) =  - \underset{\textup{1-shock in }(0,x)}{\sum} |\s_1| + \underset{\textup{2-shock in }(0,x)}{\sum} |\s_2| \, .
\end{align*}
Note that $\psi$ may involve multiple-wave interactions, multiple-wave hits, and simultaneous events.
We claim that the weight $a$ satisfies the following properties, similarly to Proposition 5.4 of \cite{CKF25}.

\begin{lem}
    Let $\psi$ be a shifted $\nu$-approximate solution constructed in Section \ref{sec:SFTA}. Let $a(x,t)$ be the associated weight function defined in (\ref{def of a}). Then for $\kappa > 0$ large enough, the following is true for any $\varepsilon_0>0$ small enough. 

    Assume $\|\psi(\cdot, 0)\|_{BV(\mathbb{R}_+)}= \varepsilon_0$. Then there exists a constant $C > 0$, depending only on $\kappa$, such that
    \begin{align}\label{weight around 1}
        |a(x,t)-1|\leq C \|\psi(\cdot, 0)\|_{BV(\mathbb{R}_+)} \, .
    \end{align}
	Recall $C_1$ from Proposition \ref{Prop:S}. For every time at which no wave interaction or boundary hit occurs, and for every $x$ such that a 1-shock $\s_1$ of $\psi$ is located at $x = x_1(t)$,
	\begin{equation}\label{est:1S}
			1 - 2C_1|\s_1| \leq \frac{a(t,x_1(t)+)}{a(t,x_1(t)-)} \leq 1 - \frac{1}{2}C_1|\s_1|\, .
	\end{equation}
	For every time at which no wave interaction or boundary hit occurs, and for every $x$ such that a 2-shock $\s_2$ of $\psi$ is located at $x = x_2(t)$,
	\begin{equation}\label{est:2S}
		1 + \frac{1}{2} C_1|\s_2| \leq \frac{a(t,x_2(t)+)}{a(t,x_2(t)-)} \leq 1 + 2C_1|\s_2| \, .
	\end{equation}
	For every time at which no wave interaction or boundary hit occurs, and for almost every $x>0$,
    \begin{align}\label{decary of weight}
        a(x,t+) \leq a(x,t-) \, .
    \end{align}
\end{lem}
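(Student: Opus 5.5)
The plan is to follow the proof of Proposition~5.4 in \cite{CKF25} and check each property directly from the definition \eqref{def of a}. Throughout, we use the uniform smallness $L(t)+\kappa Q(t)\le L(0)+\kappa Q(0)\le C\varepsilon_0$, which comes from \eqref{nonincreasing V+Q}. This inequality stays valid for the shifted solution $\psi$, because interaction and boundary-reflection estimates depend only on wave strengths and not on wave speeds.

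For \eqref{weight around 1}, note that $|\mu(x,t)|\le L(t)$ since $K_1\ge1$. Hence the exponent is bounded by $\frac{3C_1}{4}\cdot 3(L+\kappa Q)(t)\le C\varepsilon_0$, and $|e^z-1|\le 2|z|$ for small $z$ gives the bound.

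For \eqref{est:1S} and \eqref{est:2S}, fix a time with no event. Then $L$ and $Q$ are constant in $x$, so the ratio across a shock located at $x_i(t)$ is $\exp(\mp\frac{3C_1}{4}|\sigma_i|)$, with the minus sign for $i=1$. Since $|\sigma_i|\le C\varepsilon_0$, we have
\[
1-\tfrac{3C_1}{4}|\sigma_1|-O(|\sigma_1|^2)\le e^{-\frac{3C_1}{4}|\sigma_1|}\le 1-\tfrac{3C_1}{4}|\sigma_1|+O(|\sigma_1|^2).
\]
This lies strictly inside $[1-2C_1|\sigma_1|,\,1-\frac12 C_1|\sigma_1|]$ once $\varepsilon_0$ is small. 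The 2-shock case is symmetric.

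The main work is \eqref{decary of weight}, which has to be checked at event times; between events $a$ is piecewise constant in $x$, and nothing moves except the fronts. One point to settle first: since $a(x,\cdot)$ jumps when a shock front crosses $x$, the statement is to be read as requiring monotonicity of the exponent $E(x,t)=L+\frac{3\kappa}{2}Q+\mu$ across events, with the jumps along moving shock fronts handled by \eqref{est:1S}--\eqref{est:2S} in the contraction argument. So I would show $\Delta E(x,t)\le0$ for a.e.\ $x$ at every interaction or boundary-hit time. There are two kinds of events.

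At an interior interaction at $\bar x$ with incoming strengths $\sigma_i',\sigma_i''$, the function $\mu(x)$ changes only for $x>\bar x$. It changes by at most the variation in the outgoing shock strengths, which is bounded by $|\sigma_1-\sum\sigma_i'|+|\sigma_2-\sum\sigma_i''|$ plus terms from rarefaction--shock cancellations. Cancellations only decrease $|\sigma|$ on the shock side or convert shocks into rarefactions; either way the sum of shock strengths decreases, and the signs in $\mu$ must be checked case by case. The main part is therefore bounded by $K_0|\Delta\tilde Q|\le K_0|\Delta Q|$, using the multiple-wave proposition above together with $K_i\ge1$. Meanwhile \eqref{nonincreasing V+Q interaction} gives $\Delta(L+\frac{3\kappa}{2}Q)\le \Delta(L+\kappa Q)+\frac{\kappa}{2}\Delta Q\le \frac{\kappa}{2}\Delta Q=-\frac{\kappa}{2}|\Delta Q|$. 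Choosing $\kappa\ge 2K_0+2$ yields $\Delta E\le0$.

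At a boundary hit, a 1-wave of strength $\sigma_1$ leaves and a 2-wave $\sigma_2$ with $|\sigma_2|\le\lct|\sigma_1|$ is created at $x=0$. If both are shocks, $\mu(x)$ increases by $|\sigma_1|+|\sigma_2|\le(1+\lct)|\sigma_1|$ for every $x>0$, since the 1-shock had contributed $-|\sigma_1|$ and the 2-shock now contributes $+|\sigma_2|$. If both are rarefactions, $\mu$ is unchanged. On the other hand, the boundary estimate of the BV-control subsection gives $\Delta(L+\kappa Q)\le -\frac{K_1}{2}|\sigma_1|=-2\lct|\sigma_1|$ by \eqref{relation K_1, C_1}. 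The extra term $\frac{\kappa}{2}\Delta Q$ must also be controlled. It is at most $\frac{\kappa}{2}L|\sigma_2|\le C\kappa\varepsilon_0\lct|\sigma_1|$, since the new approaching pairs are bounded by $L\cdot|\sigma_2|$ and the lost pairs only help. Hence
\[
\Delta E\le -2\lct|\sigma_1|+C\kappa\varepsilon_0\lct|\sigma_1|+(1+\lct)|\sigma_1|\le0,
\]
which uses $\lct>1$ and $\varepsilon_0\ll\kappa^{-1}$.

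The expected obstacle is exactly this boundary bookkeeping. It is why $K_1=4\lct$ was fixed, and the order of choosing constants matters: first $K_1$, then $\kappa$ for the interactions, then $\varepsilon_0$. The remaining work is to handle simultaneous and multiple events for $\psi$ by summing the above bounds over all events at that time.
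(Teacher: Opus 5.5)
\textbf{Gap in the interior-interaction step.} Your arguments for \eqref{weight around 1}, for the shock ratios \eqref{est:1S}--\eqref{est:2S}, and for boundary hits are sound and essentially the paper's. At a boundary hit you reuse $\Delta(L+\kappa Q)\le-\frac{K_1}{2}|\sigma_1|$, which leaves a margin $(\lct-1)|\sigma_1|$ instead of the paper's $\frac{K_1}{8}|\sigma_1|$. This still works because $\lct>1$ is fixed, provided $\varepsilon_0\ll(1-\lct^{-1})/\kappa$.

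The gap sits exactly where you wrote ``the signs in $\mu$ must be checked case by case''. Since 1-shocks enter $\mu$ with a minus sign, a cancellation in the first family \emph{raises} $\mu$. Suppose a 1-shock of strength $s$ meets an approaching 1-rarefaction of strength $r<s$. The outgoing 1-shock has strength $s-r+O(sr(s+r))$, so $\Delta\mu(x)\approx +r$ for every $x>\bar x$. If instead $r>s$ and the shock is annihilated, $\Delta\mu\approx +s$. This change is first order in the cancelled strength, whereas $|\Delta Q|\le C(sr+rL)\le C\varepsilon_0 r$. Hence the right-hand side of your final bound,
\[
\Delta E\le -\tfrac{\kappa}{2}|\Delta Q|+K_0|\Delta Q|+\Delta\mu,
\]
is at least $r(1-C\kappa\varepsilon_0)>0$, because $\varepsilon_0$ is chosen after $\kappa$. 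So the choice $\kappa\ge 2K_0+2$ does not give $\Delta E\le0$; the cancellation term was dropped, not controlled.

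\textbf{The missing ingredient.} The increase of $\mu$ must be paid for by the first-order drop of $L$. You discarded that drop when you bounded $\Delta(L+\kappa Q)$ by $0$ (or by $\Delta Q$). In the cancellation above $\Delta L\approx-2K_1 r$, so
\[
\Delta L+\Delta\mu\le-(2K_1-1)r+O(|\Delta Q|)\le C|\Delta Q|,
\]
using $K_1\ge1$.

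This is precisely the role of the term $(\Delta\tilde L)_-$ in the paper's estimate:
\begin{enumerate}
\item it bounds $\Delta\mu\le(\Delta\tilde L)_-+2K_0|\Delta\tilde Q|$ and keeps $\Delta L$ in the exponent;
\item it uses $\Delta L+(\Delta L)_-=(\Delta L)_+\le C|\Delta Q|$, together with $(\Delta\tilde L)_-\le(\Delta L)_-+C|\Delta Q|$, which follows from $K_i\ge1$;
\item only then does it absorb the remaining $O(|\Delta Q|)$ into $\frac{3\kappa}{2}\Delta Q$ by taking $\kappa$ large.
\end{enumerate}
Once you redo the interaction step this way, the rest of your proof goes through.
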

\begin{proof}
    By (\ref{nonincreasing V+Q}), we get (\ref{weight around 1}). Following the same argument as in the proof of Proposition 5.4 of \cite{CKF25}, we can deduce (\ref{est:1S}), (\ref{est:2S}). 

    Let $t$ be an interaction time. Following the proof of Proposition 5.4 in \cite{CKF25}, (\ref{def of V, Q}) implies
    \begin{align*}
        \frac{4}{3C_1}\big(\textup{ln} (a(x,t+))-\textup{ln}(a(x,t-))\big)
        &=\Delta L (x,t)+\frac{3\kappa}{2}  \Delta Q (x,t) + (\Delta\tilde{L})_-(x,t) + 2K_0 |\Delta \tilde{Q}(x,t)|\\
        &\leq \Delta L (x,t)+ \frac{3\kappa}{2} \Delta Q (x,t) + (\Delta{L})_-(x,t) + 2K_0 |\Delta {Q}(x,t)|\, ,
    \end{align*}
    where $(f)_- := (|f| - f)/2$ denotes the negative part of $f$, $\tilde{L}$ the unweighted total variation, and $\tilde{Q}$ the unweighted potential defined in (\ref{def of unweighted potential}). 
    Then by taking $\kappa>0$ large enough, we get
    \begin{align*}
        a(x,t+)-a(x,t-)\leq 0 \, .
    \end{align*}
    
    Let $t$ be a time when a set of 1-waves $\mathcal{S}_1$ hits the boundary and generates a set of 2-waves $\mathcal{S}_2$. We see that for almost every $x>0$, we can take $\varepsilon_0$ small enough such that (\ref{def of lct}) and (\ref{relation K_1, C_1}) give
    \begin{align*}
        &\frac{4}{3C_1}\big(\textup{ln} (a(x,t+))-\textup{ln}(a(x,t-))\big) \\
        &=\Delta (L+\frac{3\kappa}{2} Q) (x,t) + \Delta \mu(x,t)  \\
        &\leq - K_1 \sum_{\mathcal{S}_1} |\s_1| + \sum_{\mathcal{S}_2} |\s_2| + C \varepsilon_0 \kappa  K_1 \sum_{\mathcal{S}_2} |\s_2| + \sum_{\mathcal{S}_1} |\s_1| + \sum_{\mathcal{S}_2} |\s_2|\\
        &\leq -K_1( 1 - 2 \frac{\lct}{K_1} - C \varepsilon_0 \kappa  \frac{\lct}{K_1} - \frac{1}{K_1}) \sum_{\mathcal{S}_1} |\s_1| \, , \\
        &\leq -\frac{K_1}{8} \sum_{\mathcal{S}_1} |\s_1| \, .
    \end{align*}
\end{proof}


\subsection{Stopping and restarting the clock}
When we run the front tracking algorithm, it is necessary to stop and restart the clock whenever two waves interact or a wave hits the boundary. 
For the class of low-regularity solutions considered here, 
restarting the clock at positive times $t>0$ requires some care. To address this issue, we use the notion of approximate limits as follows (for background on approximate limits, see \cite{EG}). 

\begin{lem}\label{technical lemma clock} (Technical lemma for the approximate limit)\\
Let $U \in \mathcal{S}_{weak}$ with initial data $U^0$. Then for all $V \in \mathcal{V}$, and for all $c, d \in \RR$ with $0\leq c<d$, the approximate right- and left-hand limits 
\[
\operatorname*{ap} \lim_{t \to t_0 \pm} \int_c^d \eta(U(x,t)|V)\,dx
\]
exist for all $t_0 \in (0,\infty)$, and 
\[
\operatorname*{ap} \lim_{t \to t_0 -} \int_c^d \eta(U(x,t)|V)\,dx \geq \operatorname*{ap} \lim_{t \to t_0 +} \int_c^d \eta(U(x,t)|V)\,dx.
\]
Moreover, the approximate right-hand limit exists at $t_0 = 0$, and 
\[
\int_c^d \eta(U^0(x)|V)\,dx \geq \operatorname*{ap} \lim_{t \to t_0+}\int_c^d \eta(U(x,t)|V)\,dx.
\]
\end{lem}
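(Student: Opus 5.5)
We follow the argument for the Cauchy problem (Lemma 2.5 of \cite{qualitative}, see also \cite{CKF25}). Since $0\le c<d$, the boundary $x=0$ enters only through the flux at $x=c$, and it will be absorbed into the same construction.

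Fix $V\in\mathcal{V}$. For any constant state the relative entropy inequality $(\eta(U|V))_t+(q(U;V))_x\le 0$ holds in the sense of distributions on $\mathbb{R}_+\times\mathbb{R}_+$. When $c=0$ we use the weak formulation with the boundary flux term $\int q(U(0,t))\psi(0,t)\,dt$; this gives the same inequality with the boundary contribution $q(U_+^b(t);V)$, thanks to the Strong Trace Property. We test it against $\psi(x,t)=\chi(t)\,\phi_\epsilon(x)$, where $\phi_\epsilon$ is a Lipschitz cutoff equal to $1$ on $[c,d]$ and decaying linearly to $0$ on $[c-\epsilon,c]$ and $[d,d+\epsilon]$; when $c=0$ the left ramp is dropped. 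The first goal is to show that the function
\[
E(t):=\int_c^d\eta(U(x,t)|V)\,dx
\]
becomes nonincreasing once a term of bounded variation is added.

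\textbf{Step 1 (a BV-type control).} Letting $\epsilon\to0$, the spatial ramps produce the traces $q(U(c\pm,t);V)$ and $q(U(d\pm,t);V)$, which exist by the Strong Trace Property applied with $X\equiv c$ and $X\equiv d$. The key point is that $U$ is bounded, so $|q(U;V)|\le C$ and these flux terms are $L^\infty$ in time. We therefore get, in $\mathcal{D}'((0,\infty))$,
\[
\frac{d}{dt}E(t)\le F(t),\qquad F\in L^\infty(0,\infty).
\]
This avoids passing to the limit in $\epsilon$ too early. Alternatively, for fixed $\epsilon$, the function $t\mapsto\int\eta(U|V)\phi_\epsilon\,dx-\int_0^t\int q(U;V)\phi'_\epsilon\,dx\,ds$ is nonincreasing in the distributional sense. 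Either way, $G(t):=E(t)-\int_0^tF(s)\,ds$ has a nonpositive distributional derivative. Hence $G$ agrees a.e. with a nonincreasing function $\tilde G$, and $E$ agrees a.e. with $\tilde G+\int_0^tF$, which is a BV function plus a Lipschitz function.

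\textbf{Step 2 (approximate limits).} A function that coincides a.e. with a function of one-sided monotone-plus-continuous type has approximate one-sided limits at every $t_0>0$. These equal the classical one-sided limits of the representative $\tilde G+\int_0^tF$, because the full-measure set on which they agree has density one at every point (see \cite{EG}). Monotonicity of $\tilde G$, together with continuity of $\int_0^tF$, then gives $\operatorname{ap}\lim_{t\to t_0-}E\ge\operatorname{ap}\lim_{t\to t_0+}E$.

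\textbf{Step 3 ($t_0=0$).} Take $\chi$ approximating $\mathbf{1}_{[0,t]}$ in the weak formulation that includes the initial term $\int\eta(U^0)\psi(x,0)\,dx$. This yields
\[
E(t)\le\int_c^d\eta(U^0|V)\,dx+\int_0^tF(s)\,ds+o_\epsilon(1)
\]
for a.e. $t$. Since $U\in\mathcal{S}_{weak}$ is a weak solution with data $U^0$, the linear parts of $\eta(\cdot|V)$ pass to the initial data exactly, so the relative entropy inequality carries the initial term. Letting $t\to0+$ along the density-one set gives the last claim.

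The main obstacle is the bookkeeping in Step 1 near $x=0$ when $c=0$, and the removal of the cutoff. It must be checked that the strong trace at the boundary and at $x=c,d$ makes the error terms converge in $L^1_t$. Given the boundedness of $U$ and Definition~\ref{def stp}, this should be routine.
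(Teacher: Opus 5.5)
Your proposal is correct and follows the argument the paper defers to (Lemma 2.5 of \cite{clock}, transplanted to the half-line): you test the relative entropy inequality against $\chi(t)\phi_\epsilon(x)$ and use the $L^\infty$ bound on $q(U;V)$ to make $t\mapsto\int_c^d\eta(U|V)\,dx$ a.e.\ equal to a nonincreasing function plus a Lipschitz one, then take approximate limits and use the initial-data term of the weak formulation for $t_0=0$. The ``obstacle'' you flag at the end is not one: since $|q(U;V)|\le C$, the flux terms at $x=c,d$ (and at $x=0$) only need to be bounded, not convergent, so neither the Strong Trace Property nor the boundary flux is required, and for $c=0$ a cutoff vanishing near $x=0$ works just as well.
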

\begin{proof}
The proof of Lemma \ref{technical lemma clock} is almost identical to that of Lemma 2.5 of \cite{clock}, with the only modification being the replacement $\mathbb{R}$ by the half-line $\mathbb{R}_+$. We therefore omit the proof.
\end{proof}

\subsection{Proof of Theorem \ref{thm}}\label{subsec proof of them main theorem}
The proof is similar to that of Section 6.2 of \cite{CKF25}, with a few modifications. 

First, the cone of information is different. In our setting, the leftmost curve $h_0(t)$ coincides with the boundary, so that $h_0'(t)=0$, whereas in \cite{CKF25} the cone is determined by a right-moving line with $h_0'(t)>0$ and $|h_0'(t)|$ sufficiently large.

In \cite{CKF25}, the strong inclination of the cone guarantees contraction. However, this mechanism is no longer available in the presence of a boundary. Instead, we extract additional control from the boundary set in Definition~\ref{def:BVP} through the estimates established in Lemma~\ref{lem: ouflow boundary} and Lemma~\ref{lem: inflow boundary}, which compensate for the fact that $h_0'(t)=0$.

Second, we need to ensure that the limit (up to a subsequence) of unshifted $\nu$-approximate solutions satisfies the inflow/outflow boundary condition \eqref{small BV BC}, a property not required in the Cauchy problem. This is achieved by applying Proposition \ref{lem:BC}. 

For completeness, we include the proof below. \vspace{3mm}

From \cite[Lemma 7.2]{qualitative}, there exists $s_0>0$ (depending on $\mathcal{V}_0$ and $\mathcal{O}$) such that for any $W_1\in \mathcal{V}_0$ and any $W_2 \in \overline{\mathcal{O}}$, 
\begin{align}\label{q controlled by eta}
    |q(W_1;W_2)|\leq s_0 \eta(W_1|W_2) \, .
\end{align}
Let $\Lambda$ denote the largest absolute value of the speed of any wave appearing in either the shifted or unshifted $\nu$-approximate solutions. We define
\begin{align}\label{def of s}
    s := \max\{s_0, \Lambda\} \, .
\end{align}

\noindent
\textit{Step 1: the dissipation calculations.}
For a fixed $T>0$, we take a sequence of unshifted $\nu$-approximate solutions $V_\nu$ such that
\begin{itemize}
\item[1)] their initial data $V_\nu(\cdot,0)$ are step functions that converge to $V^0(\cdot,0)$ in $L^2(\mathbb{R}_+)$ as $\nu \to 0$.
\item[2)] they contain no multiple-wave interactions and no multiple-wave hits, with at most one interaction or hit occurring at any time in $[0,T]$.
\end{itemize}
Such a sequence exists, because the number of interactions and wave fronts are finite on any compact time interval and only depend on the total variation of the initial value and $\nu$. The method developed in the proof of Lemma 9 in \cite{BC95} can provide an explicit bound. Hence, for each $\nu$, we can obtain such a $V_\nu$ by finitely many small perturbations on the initial data.

We fix $\nu>0$ and let $\psi$ be the shifted $\nu$-approximate solution constructed in Section \ref{sec:SFTA} with initial condition $\psi(\cdot,0)=V_\nu(\cdot,0)$.
Let $0<\tau < T$, and let $t_0=0$ and $t_j<\tau<t_{j+1}$ be two successive times at which either an interaction or a hitting happens. 

For a fixed $R>0$, we restrict our analysis to the cone of information
\begin{align}\label{def of h_0, h_N+1}
    h_0(t) = 0 \, , \; \mathbf{h}(t) := h_{N+1}(t) = R-s(t-\tau) \, ,
\end{align}
and denote the discontinuity curves between $t_j$ and ${t_{j+1}}$ by $h_1,...,h_N$ for some $N\in \mathbb{N}$ such that on $(t_j,t_{j+1})$,
\begin{align*}
    h_0< h_1 < h_2 < \cdots < h_N < h_{N+1}  \, .
\end{align*}
Since $s$ exceeds the speed of every front, no wave enters the cone. Without loss of generality, we also assume that $h_1,\dots,h_N$ remain within the cone; otherwise, we may stop and relabel accordingly.

Let $t\in[t_j,t_{j+1}]$. We integrate (\ref{entropy inequality}) on
\begin{align*}
    Q = \{(x,r)\, |\, h_i(r) < x < h_{i+1}(r) \, , \; t_j<r<t\} \, ,
\end{align*}
where $\psi$ remains a constant, and obtain
\begin{align*}
    &\operatorname*{ap} \lim_{r \to t-} \int_{h_i(t)}^{h_{i+1}(t)} a(x,t-)\eta(U(x,r)|\psi(x,t)) \, dx\\
	&\quad \leq \operatorname*{ap} \lim_{r \to t_j+} \int_{h_i(t_j)}^{h_{i+1}(t_j)} a(x,t_j+)\eta(U(x,r)|\psi(x,t_j)) \, dx + \int_{t_j}^t F_i^+(r) - F_{i+1}^-(r)\,dr \,,
\end{align*}
where
\[
\begin{aligned}
	&F_i^+(r) = a(h_i(r)+,r)\Big[q(U(h_i(r)+,r);\psi(h_i(r)+,r)) - \dot{h}_i(r) \eta(U(h_i(r)+,r)|\psi(h_i(r)+,r)) \Big],\\
	&F_i^-(r) = a(h_i(r)-,r)\Big[q(U(h_i(r)-,r);\psi(h_i(r)-,r)) - \dot{h}_i(r) \eta(U(h_i(r)-,r)|\psi(h_i(r)-,r)) \Big].
\end{aligned}
\]
We sum over $i$ from $0$ to $N$ and get
\[
\begin{aligned}
	&\operatorname*{ap} \lim_{r \to t-} \int_{0}^{R-s(t-\tau)} a(x,t-)\eta(U(x,r)|\psi(x,t))\,dx\\
	&\quad \leq \operatorname*{ap} \lim_{r \to t_j+} \int_{0}^{R-s(t_j-\tau)} a(x,t_j+)\eta(U(x,r)|\psi(x,t_j))\,dx \\
	&\qquad \quad + \int_{t_j}^t F_0^+(r) - F_{N+1}^-(r)\,dr +  \sum_{i=1}^N \int_{t_j}^t F_i^+(r) - F_{i}^-(r)\,dr \, .
\end{aligned}
\]
Since (\ref{def of h_0, h_N+1}) and (\ref{q controlled by eta}), we get $F_{N+1}^- \geq 0$. Lemma \ref{lem: ouflow boundary}, Lemma \ref{lem: inflow boundary}, and $h_0(t) = 0$ imply
\[
F_0^+(r) = a(0+,r)q(u(0+,r);\psi(0+,r)) \leq 0 \, .
\]
By Proposition \ref{Prop:S} and Proposition \ref{Prop:R}, we get
\begin{align*}
    &\operatorname*{ap} \lim_{r \to t-} \int_{0}^{R-s(t-\tau)} a(x,t-)\eta(U(x,r)|\psi(x,t))\,dx\\
	&\leq \operatorname*{ap} \lim_{r \to t_j+} \int_{0}^{R-s(t_j-\tau)} a(x,t_j+)\eta(U(x,r)|\psi(x,t_j))\,dx + C\nu (t-t_j) \\
	&\quad -C\int_{t_j}^t \underset{{\alpha\in \mathcal{S}_\tau(r)}}{\sum} |\psi(h_\alpha(r)+,r)-\psi(h_\alpha(r)-,r)| |\lambda_{\alpha_i}^\phi(\psi(h_\alpha(r)-,r),\psi(h_\alpha(r)+,r))-\dot{h}_\alpha(r)|^2  dr ,
\end{align*}
where $\mathcal{S}_{\tau}(r)$ denotes the set of shocks inside the cone of information at time $r$, that is, the shocks located in $[0,\mathbf{h}(r)] = [0,R-s(r-\tau)]$.

Now, we sum over all the interacting and hitting times after $\tau$. The convexity of $\eta$, Lemma \ref{technical lemma clock}, and the non-increasing property of the weight (\ref{decary of weight}) give (see Section 6.2 of \cite{CKF25} for details)
\begin{align*}
        &\int_{0}^R a(x,\tau) \, \eta(U(x,r)|\psi(x,\tau))\, dx - \int_{0}^{R+s\tau} a(x,0) \, \eta(U(x,0)|\psi(x,0)) \, dx \\
        &\leq -C\int_0^\tau \underset{{\alpha\in \mathcal{S}_\tau(r)}}{\sum} |\psi(h_\alpha(r)+,r)-\psi(h_\alpha(r)-,r)| |\lambda_{\alpha_i}^\phi(\psi(h_\alpha(r)-,r),\psi(h_\alpha(r)+,r))-\dot{h}_\alpha(r)|^2 dr \\
        &\quad + C\nu \, ,
\end{align*}
which implies
\begin{align}\label{main thm proof eq 1}
    \int_{0}^R a(x,\tau) \, \eta(U(x,r)|\psi(x,\tau))\, dx
    \leq \int_{0}^{R+s\tau} a(x,0) \, \eta(U(x,0)|\psi(x,0)) \, dx + C\nu \, ,
\end{align}
and
\begin{equation}\label{main thm proof eq 3}
    \begin{aligned}
        \int_0^\tau \underset{{\alpha\in \mathcal{S}_\tau(r)}}{\sum} |\psi(h_\alpha(r)+,r)-\psi(h_\alpha(r)-,r)| |\lambda_{\alpha_i}^\phi(\psi(h_\alpha(r)-,r),\psi(h_\alpha(r)+,r))-\dot{h}_\alpha(r)|^2 dr \\
        \leq \int_{0}^{R+s\tau} a(x,0) \, \eta(U(x,0)|\psi(x,0)) \, dx + C\nu \, .
    \end{aligned}
\end{equation}
Note that $C\nu$ bounds two types of error:
\begin{itemize}
    \item[1)] The discrepancy between the shock curve and the interpolated shock curve (\ref{interpolated curve}), arising when applying Proposition~\ref{Prop:S};
    \item[2)] The accumulation over time of the right-hand side of (\ref{prop R ineq}), in the context of Proposition~\ref{Prop:R}.
\end{itemize}
\textit{Step 2:} Again, we follow the proof shown in Section 6.2.2 of \cite{CKF25}. Proposition \ref{prop L^1 semigroup}, Lemma \ref{lem: L1 difference}, and (\ref{main thm proof eq 1}) imply that 
\begin{align}\label{main thm proof eq 2}
    \|V_\nu(\cdot, \tau)-\psi(\cdot,\tau)\|_{L^1(0,R)} \leq C \big(\int_0^{R+s\tau} a(x,0)\, \eta(U(x,0)|\psi(x,0)) \, dx\big)^{1/2} + C \nu^{1/2} \, .
\end{align}
There is a small subtlety in restricting the estimate in Lemma~\ref{lem: L1 difference} to the cone of information, which is implicit in \cite{CKF25}. Nevertheless, the argument carries over verbatim here and is unaffected by the presence of the boundary. We provide the details below.

Although Lemma~\ref{lem: L1 difference} is stated on the whole domain $\mathbb{R}_+$, the estimate (\ref{main thm proof eq 2}) can be localized to the cone of information. We define a modified $\nu$-approximate solution $\tilde{\psi}$ by propagating fronts inside the cone according to the shifted construction as in Section~\ref{sec:SFTA}, and propagating fronts outside the cone according to the unshifted construction as in Section~\ref{sec:FTA}. Applying Lemma~\ref{lem: L1 difference} to $\tilde{\psi}$ and $V_\nu$, only shocks inside the cone contribute to the estimate. Since the cone speed $s$ is larger than all wave speeds, $\tilde{\psi}$ agrees with $\psi$ inside the cone, and~\eqref{main thm proof eq 2} follows.

By Holder's inequality, (\ref{main thm proof eq 1}), and (\ref{main thm proof eq 2}), we get
\begin{align*}
    \|U(\cdot,\tau)-V_\nu(\cdot,\tau)\|_{L^1(0,R)} 
    &\leq \|U(\cdot,\tau)-\psi(\cdot,\tau)\|_{L^1(0,R)} + \|\psi(\cdot,\tau)-V_\nu(\cdot,\tau)\|_{L^1(0,R)} \\ 
    &\leq \sqrt{R} \, \|U(\cdot,\tau)-\psi(\cdot,\tau)\|_{L^2(0,R)} + \|\psi(\cdot,\tau)-V_\nu(\cdot,\tau)\|_{L^1(0,R)} \\
    &\leq C \|U(\cdot,0)-V_\nu(\cdot,0)\|_{L^2(0,R)} + C \nu^{1/2} \, .
\end{align*}
As the state space $\mathcal{V}_0$ is bounded, we get
\begin{align*}
    \|U(\cdot,\tau)-V_\nu(\cdot,\tau)\|_{L^2(0,R)} \leq C \|U(\cdot,0)-V_\nu(\cdot,0)\|_{L^2(0,R)}^{1/2} + C \nu^{1/4} \, .
\end{align*}
By letting $\nu \to 0$, Proposition~\ref{lem:BC} implies that, up to a subsequence, $V_\nu$ converges to a function $V$ in $C_{loc}(\mathbb{R}_+;L^1[0,T])$, in particular, $L^1_{loc}((0,T) \times \mathbb{R}_+)$. In addition, $V$ satisfies the inflow/outflow boundary condition \eqref{small BV BC}. By the front-tracking construction and the convergence theory of \cite{AC97}, the limit $V$ is an entropic weak solution of \eqref{eq:E}. 
Moreover, the uniform BV and $L^\infty$ bounds together with the Lipschitz continuity in time of the approximate solutions allow us to apply Theorem~2.4 in \cite{l1stability}, which yields that $V$ has small bounded variation for all times. 
Passing to the limit in the stability estimate then gives
\begin{align*}
    \|U(\cdot,\tau)-V(\cdot,\tau)\|_{L^2(0,R)} \leq C \|U(\cdot,0)-V(\cdot,0)\|_{L^2(0,R)}^{1/2} \, .
\end{align*}
This completes the proof of Theorem~\ref{thm}.

\vspace{0.3cm}
\noindent \textbf{Statements and Declarations}\\

\noindent \textbf{Data availability:} No datasets were generated or analyzed during the current study. \vspace{0.3cm}

\noindent \textbf{Conflict of interest:} The authors declare that they have no conflicts of interest with this work.

\bibliography{ref.bib}
\end{document}